\newcommand{\TITLE}{Integral points on elliptic curves and explicit valuations of division polynomials}
\newcommand{\TITLERUNNING}{Integral points on elliptic curves and explicit valuations of division polynomials}
\newcommand{\DATE}{\today}
\theoremstyle{plain} 
\newtheorem{theorem}{Theorem} 
\newtheorem*{langconjecture}{Lang's Height Conjecture}
\newtheorem*{langconjecturetwo}{Hall-Lang Conjecture}
\newtheorem{proposition}[theorem]{Proposition}
\newtheorem{lemma}[theorem]{Lemma}
\newtheorem{corollary}[theorem]{Corollary}
\theoremstyle{definition}
\newtheorem{definition}[theorem]{Definition}
\theoremstyle{remark}
\newtheorem{remark}[theorem]{Remark}
\newtheorem{example}[theorem]{Example}
\newtheorem*{acknowledgement}{Acknowledgements}
\newcommand{\CC}{\mathbb{C}}
\newcommand{\FF}{\mathbb{F}}
\newcommand{\QQ}{\mathbb{Q}}
\newcommand{\ZZ}{\mathbb{Z}}
\renewcommand{\gcd}{{\operatorname{gcd}}}
\newcommand{\MOD}[1]{~(\textup{mod}~#1)}
\renewcommand{\pmod}{\MOD}
\newcommand{\ord}{\operatorname{ord}}
\renewcommand{\setminus}{\smallsetminus}
\newcommand\al{\left\lfloor \frac{a}{\ell} \right\rfloor}
\newcommand\nal{\left\lfloor \frac{na}{\ell} \right\rfloor}
\newcommand\Sil{MR2514094}
\newcommand\Siltwo{MR1312368}
\newcommand\Ingram{MR2468477}
\newcommand\CheonHahn{MR1654780}
\newcommand\CheHahTwo{MR1683630}
\newcommand\AyadS{MR1185022}
\title[\TITLERUNNING]{\TITLE}
\date{\DATE}
\author{Katherine E. Stange} 
\address{%
Department of Mathematics,
University of Colorado Boulder,
Campus Box 395,
Boulder, CO, 80309, USA}
\email{kstange@math.colorado.edu}
\subjclass[2010]{Primary: 11G05, 11G07; Secondary: 11D25, 11B37, 11B39, 11Y55, 11G50, 14H52}
\keywords{elliptic divisibility sequence, Lang's conjecture, height functions}
\thanks{The author's research has been supported by NSERC PDF-373333 and NSF MSPRF 080291.}
\begin{document}

\begin{abstract}
  Assuming Lang's conjectured lower bound on the heights of non-torsion points on an elliptic curve, we show that there exists an absolute constant $C$ such that for any elliptic curve $E/\QQ$ and non-torsion point $P \in E(\QQ)$, there is at most one integral multiple $[n]P$ such that $n > C$.  The proof is a modification of a proof of Ingram giving an unconditional but not uniform bound.  The new ingredient is a collection of explicit formul{\ae} for the sequence $v(\Psi_n)$ of valuations of the division polynomials.  For $P$ of non-singular reduction, such sequences are already well described in most cases, but for $P$ of singular reduction, we are led to define a new class of sequences called \emph{elliptic troublemaker sequences}, which measure the failure of the N\'eron local height to be quadratic.  As a corollary in the spirit of a conjecture of Lang and Hall, we obtain a uniform upper bound on $\widehat{h}(P)/h(E)$ for integer points having two large integral multiples.
\end{abstract}

\maketitle

\section{Introduction}

A famous theorem of Siegel states that there are only finitely many integral points on any elliptic curve $E/\QQ$.  Of course, this implies that among the multiples $[n]P$ of any particular point $P$, only finitely many may be integral.  In this context there are two natural ways to give a bound:  on the number of such points; and on the size of $n$.  If one assumes either the \emph{abc} Conjecture of Masser and Oesterl\'e, or Szpiro's Conjecture, and restricts attention only to elliptic curves in minimal Weierstrass form, then the \emph{number} of integral points among the multiples of $P$ is bounded uniformly according to work of Hindry and Silverman \cite{MR948108}.  This is also known unconditionally for curves of integral $j$-invariant \cite{MR895285}.  

The best known result bounding the \emph{size} of $n$ is due to Ingram \cite{\Ingram}, who uses lower bounds on linear forms in elliptic logarithms to bound $n$ in terms of the height of $E$, and the quantity $M(P)$, defined as the smallest $m$ such that $[m]P$ has non-singular reduction modulo all primes (note that $M(P)$ can be bounded above in terms of $E$ alone).  Using a gap principle, Ingram goes on to find a constant $C$, depending only on $M(P)$ (and not the height of $E$) such that at most one multiple $[n]P$ is integral for $n > C$.  At the moment, analogous results bounding integral points among linear combinations $[n]P + [m]Q$ seem to be out of reach.

In this paper, we obtain a similar result in which the constant depends only on the ratio of heights $h(E)/\widehat{h}(P)$, defined as follows.
The canonical height of a point $P$ is given by
\[
\widehat{h}(P) := \frac{1}{2} \lim_{n \rightarrow \infty} \frac{h([2^n]P)}{4^n},
\]
where $h(P) := h(x(P))$, the logarithmic height of the $x$-coordinate.  The height of $E$ is
  \[
h(E) := \max\{ h(j), \log|\Delta|, 1 \}.
  \]
See Section \ref{sec:integral} for more detail.

\begin{theorem}
  There are uniform constants $C$ and $C'$ such that for all elliptic curves $E/\QQ$ in minimal Weierstrass form, and non-torsion points $P \in E(\QQ)$, there is at most one value of 
  \begin{equation}
          \label{eqn:thm1}
  n > \max \left\{ C \frac{h(E)}{\widehat{h}(P)} \log\left( \frac{h(E)}{\widehat{h}(P)} \right), C' \right\}
  \end{equation}
  such that $[n]P$ is integral.  Furthermore, this one value is prime.
  \label{thm:patrick-new}
\end{theorem}

The bulk of the proof consists of giving complete closed formul{\ae} for the valuations of elliptic divisibility sequences at primes of bad reduction, which ingredient is combined with established methods of Ingram \cite{\Ingram}.  The formul{\ae} themselves are considered a principle goal of this paper, but we will first discuss the implications of Theorem \ref{thm:patrick-new}.

The restriction to minimal elliptic curves is necessary, since otherwise there exist methods of constructing examples with arbitrarily many integral points.  Throughout the paper we compare this result to Theorem 1 of Ingram \cite{\Ingram}, which differs only in that the bound \eqref{eqn:thm1} is replaced by $n > CM(P)^{16}$.  These bounds are quite different: for example, one does not expect curves of large height necessarily to have large $M(P)$.  The quantity $M(P)$ divides the least common multiple of the Tamagawa numbers of $E$, which measure the number of components in the fibres of the N\'eron model.

Unfortunately, the constants $C$ and $C'$ in Theorem \ref{thm:patrick-new}, while effective, are quite large.  More details can be found in Section \ref{sec:integral}.

The bound becomes uniform if one assumes a well-known conjecture of Lang, here given in a slightly strengthened form (for details, see Section \ref{sec:integral}). 

\begin{langconjecture}[{\cite[p. 92]{MR518817}, \cite[Conjecture 9.9]{\Sil}}]
  There is a uniform constant $C_L$ such that for any elliptic curve $E/\QQ$ in minimal Weierstrass form, and point $P \in E(\QQ)$ of infinite order, 
  \[
  \widehat{h}(P) > C_L h(E).
  \]
\end{langconjecture}

Lang's conjecture follows from the \emph{abc} Conjecture, via Szpiro's Conjecture \cite{MR948108}.  The bound on $n$ in Theorem \ref{thm:patrick-new} becomes uniform if we assume any of these conjectures.  In particular, the bound is uniform if we restrict to elliptic curves of integral $j$-invariant, or curves for which the denominator of the $j$-invariant is divisible by a bounded number of primes, for which Lang's conjecture is known to hold \cite{MR948108, MR630588}.  The uniformity of the bound in Theorem \ref{thm:patrick-new} in the case of integral $j$-invariant is already a result of Ingram's original argument \cite{\Ingram} (but this does not extend to curves whose $j$-invariant is divisible by a bounded number of primes).

An immediate corollary to the main result is the following.

\begin{corollary}
  There are uniform constants $c$ and $c'$ such that for any elliptic curve $E/\QQ$ in minimal Weierstrass form, and point $P \in E(\QQ)$ of infinite order having at least two integral multiples $[n]P$ and $[m]P$ satisfying $n > m > c'$, then
  \[
  \widehat{h}(P) \le c h(E).
  \]
  \label{maincor}
\end{corollary}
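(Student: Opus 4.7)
The proof is a direct consequence of Theorem \ref{thm:patrick-new}. Fix $c' > C'$, where $C'$ is the constant from Theorem \ref{thm:patrick-new}, and suppose $[n]P$ and $[m]P$ are both integral with $n > m > c'$. Writing
\[
T \;=\; \max\!\left\{\, C\,\frac{h(E)}{\widehat{h}(P)}\, \log\!\left(\frac{h(E)}{\widehat{h}(P)}\right),\; C' \right\},
\]
Theorem \ref{thm:patrick-new} says at most one integer exceeding $T$ can index an integral multiple of $P$. Since $n > m$, this forces $m \le T$; and since $m > c' > C'$, the $C'$ branch of the maximum is not binding, so we obtain
\[
m \;\le\; C\,\frac{h(E)}{\widehat{h}(P)}\, \log\!\left(\frac{h(E)}{\widehat{h}(P)}\right).
\]

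Setting $u = h(E)/\widehat{h}(P)$ and combining with $m > c'$ gives $u \log u > c'/C$. The function $u \mapsto u \log u$ vanishes at $u = 1$ and is strictly increasing on $[1,\infty)$ with range $[0,\infty)$, so there is a unique $u_0 > 1$ with $u_0 \log u_0 = c'/C$, and the displayed inequality forces $u > u_0$. Rewriting, $\widehat{h}(P) < h(E)/u_0$, which is the desired bound with $c := 1/u_0$.

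There is really no substantial obstacle in this corollary: all the work lies in the proof of Theorem \ref{thm:patrick-new}, and the deduction above amounts to inverting the bound on $n$ produced there so as to bound $\widehat{h}(P)/h(E)$ from above instead. The one subtle point is the choice $c' > C'$, which eliminates the $C'$ branch of the maximum and thereby allows the inversion of $u \log u$ to yield a uniform positive lower bound on $u$. Enlarging $c'$ enlarges $u_0$ and shrinks $c$, reflecting the intuition that demanding integral multiples further out along the sequence $[n]P$ forces the height of $P$ to be an increasingly small fraction of $h(E)$.
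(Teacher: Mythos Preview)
Your proof is correct and is exactly the intended derivation: the paper states this result as an immediate corollary of Theorem~\ref{thm:patrick-new} without giving a separate argument, and your inversion of the bound $m \le C\,u\log u$ (with $u = h(E)/\widehat{h}(P)$) is the natural way to extract it. The only point worth noting is that the case $u \le 1$ is handled implicitly, since then $u\log u \le 0 < c'/C$ contradicts the derived inequality; your restriction to $[1,\infty)$ is therefore justified.
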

Call a triple $(P, n, m)$ satisfying the hypotheses of Corollary \ref{maincor} \emph{far-out} for the constant $c'$.  Then the theorem and corollary state that Lang's Height Conjecture is incompatible with the existence of far-out triples for arbitrarily large $c'$; in essence, sufficiently far-out triples would generate examples of points $P$ with large ratio $h(E)/\widehat{h}(P)$ (i.e. extreme examples for Lang's Height Conjecture).

Compare to another conjecture of Hall and Lang, which posits an \emph{upper} bound on the height of an integral point in terms of the height of the curve.  Note that any $P$ with integral multiples is necessarily integral.

\begin{langconjecturetwo}[{\cite[Conjecture 5]{MR717593}}]
  There is a uniform constant $C_{HL}$ such that for any elliptic curve $E/\QQ$ in minimal Weierstrass form, and integral point $P \in E(\QQ)$ of infinite order,
  \begin{equation}
    \label{eqn:langintheight}
    \widehat{h}(P) < C_{HL} h(E).
\end{equation}
\end{langconjecturetwo}

This conjecture generalises a conjecture of Hall for elliptic curves of the form $y^2 = x^3 + b$ \cite{MR0323705}. In \cite{MR717593}, Lang used a slightly different definition of $h(E)$ than we use here; see Section \ref{sec:integral} for a justification that they are equivalent.  The conjecture seems out of reach; the best known bounds with a uniform constant are exponential in $h(E)$ \cite{MR0231783, MR1145607, MR1288309, MR0340175}, but see also \cite{MR1348477}.

In this light, Corollary \ref{maincor} states roughly that far-out triples satisfy a Hall-Lang bound (even if Lang's Height Conjecture holds, it may allow `moderately' far-out triples).  More interestingly, a strengthening of Theorem \ref{thm:patrick-new} would lead to a Hall-Lang result in cyclic subgroups.  Specifically, if \eqref{eqn:thm1} could be strengthened to
\[
        n > \max\left\{ C \left( \frac{h(E)}{\widehat{h}(P)} \right)^\frac12, C' \right\},
\]
then one would obtain the following statement:  There are uniform constants $D_1$ and $D_2$ such that for any integral point $P$, all integral multiples $Q = [n]P$ of $P$ satisfying $n>D_1$, except at most one, satisfy $h(E)/\widehat{h}(Q) > D_2$.  To derive this, one uses the fact that $h([n]P) = n^2h(P) + O(1)$ (see, for example, \cite[\S VIII.6]{\Sil}.

Theorem \ref{thm:patrick-new} is proven using the following estimate for $P$ such that $[n]P$ is integral (Proposition \ref{prop:patrick-too}):
\[
\widehat{h}(P) \le  \log n + \frac{16}{3} h(E).
\]
Ingram's argument depends upon a similar estimate, which in turn depends upon examination of the division polynomials $\Psi_n$ of an elliptic curve.  In particular, for a point $P$, he bounds the size $|\Psi_n(P)|$ in relation to the denominator $D_n$ of $[n]P$, by considering the valuations $v_p(\Psi_n(P))$ for each prime.  The sequence $W_n = \Psi_n(P)$ is called an \emph{elliptic divisibility sequence}, or EDS.

As one might expect, the arithmetic geometry of the underlying curve and point shows itself in the number theory of the elliptic divisibility sequence, which is a subject of interest in its own right.  In fact, if one pursues an analogy to the relationship between an EDS and its underlying curve, replacing the elliptic curve with a twist of the multiplicative group, then one obtains, instead of an EDS, a Lucas sequence of the first kind, such as the Mersenne or Fibonacci numbers. The centuries-old number theoretic questions about Lucas sequences, such as the prime factorisation of their terms, when asked about elliptic divisibility sequences, translate to questions about the arithmetic geometry of $P$ and $E$, such as the orders of $P$ under reduction to finite fields.  A great many of these questions have been studied for EDS:  appearance of prime terms \cite{MR1815962, MR1961589}, primitive divisors \cite{MR2220263, ingramsilverman06, MR2555700}, squares and powers \cite{MR2164113, MR2669714, Mahe-Explicit-bounds} and the sign of terms \cite{MR2226354}, to name a few.

In this paper, we give a full, explicit description of the possible sequences of valuations $v(W_n)$ for an EDS over a $p$-adic field or a number field.  Ingram's result depends on work of Cheon and Hahn on such sequences of valuations \cite{\CheonHahn}, and it is here that the dependence on $M(P)$ arises.  Cheon and Hahn describe the sequence of valuations recursively, determining a growth rate.  In contrast, this paper provides a closed form whose parameters depend on the reduction properties of $P$ and $E$.  It is this that allows us to prove the estimate of Proposition \ref{prop:patrick-too} and therefore Theorem \ref{thm:patrick-new}.  However, it is the intention of this paper to give a complete description of these valuation sequences for its own sake, and this work makes up the bulk of the paper.

Proposition \ref{prop:patrick-too} is obtained from Lemma \ref{lemma:dnwn} of this paper, which is the moment at which the EDS results are used to feed into the proof of Theorem \ref{thm:patrick-new}.  It came to the author's attention after this paper was written that the same result appears in Mahe \cite[Proposition 4.2.3]{Mahe-Explicit-bounds}, with a proof by different methods.

For primes of good reduction for the associated elliptic curve, the sequence of valuations at a prime place is well understood and has a simple, pleasing description which has become a sort of `folk theorem,' although its first appearance in print is due to Cheon and Hahn \cite{\CheonHahn, \CheHahTwo} (but see Remark \ref{remark:thm-errors}).  

\begin{theorem}[introductory form of Theorem \ref{thm:non-sing}; see Section \ref{sec:good} for references to other versions appearing in the literature]

\label{thm:goodprimes-folk}
Let $E$ be an elliptic curve with good reduction over a $p$-adic field $K$ with valuation $v$.
Let $n_P > 1$ be the order of $P \in E_0(K)/E_1(K)$.  
Suppose that $E$ is a minimal Weierstrass model and that $v(W_{n_P}) > \frac{v(p)}{p-1}$.  
Then 
  \[
  v(W_n) =  \left\{ \begin{array}{ll}
    v(W_{n_P}) + v(n/n_P) & n_P \mid n, \\
    0 & n_P \nmid n.
  \end{array} \right.
  \]
\end{theorem}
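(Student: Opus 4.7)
The plan is to split the analysis according to whether $n_P \mid n$, using reduction modulo a uniformizer $\pi$ for the zero case and the formal group together with the formal logarithm for the non-zero case.

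Since $E$ has good reduction and is given in minimal Weierstrass form, every point of $E(K)$ has integral coordinates, so $W_n = \Psi_n(P) \in \Ocal_K$. Reducing modulo $\pi$ gives $\tilde{W}_n = \tilde{\Psi}_n(\tilde{P})$, the $n$-th term of the EDS attached to $(\tilde{E},\tilde{P})$ over the residue field. Because $\tilde{\Psi}_n$ vanishes at $\tilde{P}$ precisely when $[n]\tilde{P} = \tilde{O}$, and $n_P$ is by definition the order of $\tilde{P}$ in $\tilde{E}(k)$, one has $\tilde{W}_n = 0$ iff $n_P \mid n$. This settles the case $n_P \nmid n$: $v(W_n) = 0$.

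For $n_P \mid n$, write $n = n_P m$. The aim is first to identify $v(W_n)$ with the valuation of the formal parameter $t = -x/y$ at $[n]P \in E_1(K)$, and then to invoke the formal logarithm. Using $x([n]P) = \phi_n(P)/W_n^2$ and $y([n]P) = \omega_n(P)/W_n^3$ one has $t([n]P) = -\phi_n(P)\,W_n/\omega_n(P)$. A short computation, based on $\phi_n = x\Psi_n^2 - \Psi_{n-1}\Psi_{n+1}$ and the analogous identity for $\omega_n$, together with the case $n_P \nmid n$ just proven applied to the indices $n \pm 1$ (which are never divisible by $n_P$ when $n_P > 1$ and $n_P \mid n$), and the standard EDS divisibility $W_{n_P} \mid W_n$, shows $\phi_n(P),\omega_n(P) \in \Ocal_K^{\times}$, and hence $v(W_n) = v(t([n]P))$. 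The hypothesis $v(W_{n_P}) > v(p)/(p-1)$ places $Q := [n_P]P$ in a subgroup $\hat{E}(\gm^r)$ with $r > v(p)/(p-1)$, on which the formal logarithm is a valuation-preserving isomorphism onto the corresponding additive subgroup of $\hat{\GG}_a$. Under this isomorphism $[m]$ becomes ordinary multiplication by $m$, so $v(t([m]Q)) = v(m) + v(t(Q))$, yielding $v(W_n) = v(W_{n_P}) + v(n/n_P)$.

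The main technical obstacle I expect is verifying the $v$-unit claim for $\phi_n(P)$ and $\omega_n(P)$, which requires combining the first case with the EDS divisibility relation and the right identities for $\phi_n, \omega_n$ in terms of $\Psi$'s. The formal-logarithm isomorphism is standard but must be applied with care to ensure $[m]Q$ stays inside $\hat{E}(\gm^r)$ throughout; this last point is automatic because formal-group multiplication never decreases the valuation of the parameter once we are above the threshold $v(p)/(p-1)$.
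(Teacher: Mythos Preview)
Your proposal is correct and shares the paper's skeleton: handle $n_P\nmid n$ by reduction, then for $n_P\mid n$ identify $v(W_n)$ with $v(t([n]P))$ and work in the formal group. The paper obtains the statement as Theorem~\ref{thm:non-sing} together with Corollary~\ref{cor:non-sing-nice}(i). Two differences are worth noting. For the unit claim on $\phi_n,\omega_n$, the paper cites Ayad's theorem to get $\min\{v(\phi_n),v(\Psi_n)\}=0$ and then deduces $v(\omega_n)=0$ from the Weierstrass equation for $[n]P$; your direct route via the $\Psi$-identities works cleanly for $\phi_n$, but the identity for $\omega_n$ involves $\Psi_{n\pm2}$ and a factor $4y$ (or $\Psi_2$), so the cases $n_P=2$ and $p=2$ need separate handling, which the paper's route avoids. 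For the formal-group step, the paper proves a general lemma (Lemma~\ref{lemma:vals-form}) describing $v([n]z)$ without any hypothesis on $v(z)$ and then specialises, whereas your use of the formal logarithm is a cleaner shortcut under the present hypothesis; the paper's lemma costs more but is reused throughout for the cases where $v(W_{n_P})>v(p)/(p-1)$ fails. One small slip: it is not true that every point of $E(K)$ has integral coordinates (those in $E_1(K)$ do not); what you actually need is only that $P$ itself does, which follows from $n_P>1$.
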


In Theorem \ref{thm:non-sing}, we give a more complete characterisation than has, to our knowledge, appeared in the literature.  In particular, we remove the assumption that $v(W_{n_P}) > \frac{v(p)}{p-1}$, at the cost of some extra complication to the formula.

In contrast to the good reduction case, the primes of bad reduction often pop up in great quantity in an EDS, in frequency depending on the reduction of $P$ on the N\'{e}ron model.  We now state an introductory theorem combining all types of reduction (each treated separately in the paper).

\begin{theorem}[introductory combination of Theorems \ref{thm:change-to-minimal}, \ref{thm:non-sing}, \ref{thm:pot-good}, \ref{thm:mult} and \ref{thm:pot-mult}]
\label{thm:main-intro}
Let $K$ be an unramified extension of $\QQ_p$, $p \neq 2$.
Let $W_n$ be an EDS, associated to an elliptic curve $E/K$ in Weierstrass form, and non-torsion $P \in E(K)$.  There exist integers $a, \ell, c_1, c_2, c_3, c_4, c_5 $
such that
\[
v(W_n) = \frac{1}{c_1} \left( 
R_n(a,\ell)
 + c_2n^2 + c_3 +  \left\{ \begin{array}{ll}
c_4 + v(n) & c_5 \mid n \\
0 & c_5 \nmid n \\
\end{array} \right. \right) .
\]
where
\begin{equation*}
R_n(a,\ell) 
=
\left\lfloor
\frac{n^2 \widehat{a} (\ell -\widehat{a})}{2\ell}
\right\rfloor 
- \left\lfloor \frac{\widehat{na}(\ell-\widehat{na})}{2\ell} \right\rfloor ,
\end{equation*}
and $\widehat{x}$ denotes the least non-negative residue of $x$ modulo $\ell$.

Furthermore, 
\[
a = 0 \iff R_n(a, \ell) \equiv 0 \iff \left\{ \begin{array}{l} E \mbox{ has potential good reduction or } \\ P\mbox{ has non-singular reduction} \end{array} \right\}.
\]
\end{theorem}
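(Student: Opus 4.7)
The plan is to prove this by case analysis on the reduction type of $E$ at $v$ and the position of $P$ on the N\'eron model, invoking the individual theorems being aggregated here. First I would apply Theorem \ref{thm:change-to-minimal} to reduce to the case of a minimal Weierstrass model at $v$, with the effect of the model change absorbed into the constants $c_1$, $c_2$, and $c_3$. With minimality in hand, the remaining work is to verify that each of the four subsequent theorems is a specialisation of the single formula for an appropriate choice of the parameters $(a,\ell,c_1,\ldots,c_5)$.

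Concretely, in the good reduction case I invoke Theorem \ref{thm:non-sing} and take $a = 0$ (so $R_n \equiv 0$), $c_2 = 0$, $c_5 = n_P$ the order of $P$ modulo $E_1(K)$, and $c_4 = v(W_{n_P})$, so the bracket collapses to $v(W_{n_P}) + v(n/n_P)$ whenever $n_P \mid n$ and to $0$ otherwise. In the additive potential-good case (Theorem \ref{thm:pot-good}) I again set $a = 0$; the new ingredient is the quadratic term $c_2 n^2$ accounting for the failure of the given model to be minimal on an unramified twist, together with the bracket capturing the Tamagawa cycle. In the multiplicative case (Theorem \ref{thm:mult}) I take $\ell$ to be the Tamagawa number and $a$ the index of the N\'eron component containing $\overline{P}$, so that $R_n(a,\ell)$ becomes Tate's formula for the local N\'eron height at a multiplicative place -- this is precisely the elliptic troublemaker input. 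The additive potential-multiplicative case (Theorem \ref{thm:pot-mult}) is parallel after passing to a quadratic unramified twist, producing $c_1 = 2$ to absorb the resulting denominator.

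For the final biconditional, the implication $a = 0 \Rightarrow R_n(a,\ell) \equiv 0$ is immediate since all $\widehat{na}$ vanish. For the converse, if $a \not\equiv 0 \pmod{\ell}$, evaluation at $n = 2$ (where $\widehat{2a} \ne 2\widehat{a}$ whenever $\widehat{a} > \ell/2$; otherwise pick a slightly larger $n$) gives $R_n(a,\ell) \ne 0$. The equivalence with ``potential good reduction or non-singular reduction of $P$'' then follows from the case analysis: these are exactly the good and additive potential-good configurations, for which the constituent theorems force $a = 0$, while in the (potential) multiplicative cases $\overline{P}$ sits in a nontrivial component of the N\'eron model, forcing $a \not\equiv 0 \pmod{\ell}$ by the choice of $a$ as the component index.

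The main obstacle is bookkeeping: lining up five separately-formatted theorems under a single uniform formula while ensuring that the parameter tuple $(a,\ell,c_1,\ldots,c_5)$ covers every configuration consistently. A secondary difficulty is that each constituent theorem carries its own hypotheses -- in particular the unramified, $p \neq 2$ assumption is essential for Theorem \ref{thm:pot-mult} in order to prevent wild ramification in the quadratic twist -- so I would need to verify that the common hypothesis stated here is sufficient for all four constituents simultaneously.
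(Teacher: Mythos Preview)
Your approach is essentially the paper's own: Theorem~\ref{thm:main-intro} is presented as an ``Introductory Combination Form'' with no standalone proof, and the paper simply remarks (opening of Section~\ref{sec:pot-mult}) that the accumulated Theorems~\ref{thm:change-to-minimal}, \ref{thm:non-sing}, \ref{thm:pot-good}, \ref{thm:mult}, \ref{thm:pot-mult} combine to give it---exactly the case analysis you outline.

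A few bookkeeping corrections are worth noting. In the non-singular case you want $c_4 = v(W_{n_P}) - v(n_P)$, not $v(W_{n_P})$, since the displayed formula contains $v(n)$ rather than $v(n/n_P)$; the hypothesis $K/\QQ_p$ unramified with $p \neq 2$ is what lets you invoke Corollary~\ref{cor:non-sing-nice}(iii) so that the general $S_n$ collapses to this simple form. The extension resolving additive potential-multiplicative reduction is \emph{ramified} quadratic (see the proof of Theorem~\ref{thm:pot-mult} and \cite[Proposition VII.5.4]{\Sil}), not unramified---your $c_1 = 2$ is correct but the word ``unramified'' is not. For the converse $a \not\equiv 0 \Rightarrow R_n \not\equiv 0$, your $n=2$ idea works cleanly via Proposition~\ref{prop:rn}\ref{item:triang} and \ref{item:aell}: assuming $0 < \widehat{a} \le \ell/2$, one has $R_2(a,\ell) = \widehat{a} > 0$. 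Finally, note that multiplicative reduction with $P$ non-singular also falls under Theorem~\ref{thm:non-sing} (hence $a=0$), so your final biconditional case split should read ``$P$ non-singular (any reduction type) or $E$ potential good'' rather than ``good and additive potential-good.''
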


The full results in this paper apply to all $p$-adic fields and to torsion points, at the cost of some complication to the final term of the formula.  They also provide much more detail about the significance and possible values of the parameters.  The sequences $R_n(a,\ell)$, here dubbed \emph{elliptic troublemaker sequences}, satisfy a host of properties examined in Section \ref{sec:ets}.

For $P$ such that $[n]P \neq \mathcal{O}$, the valuations of $W_n$ are connected to N\'eron-Tate local heights by the following relationship \cite[Exercise VI.6.4(e)]{\Siltwo}:
\[
\lambda_v([n]P) = n^2 \lambda_v(P) - \log |W_n|_v + \frac{n^2-1}{12} \log |\Delta|_v.
\]
Some portions of the results in this paper can be viewed as results about local heights, and could be proven by recourse to the established theory of such.

In the case of good reduction in minimal Weierstrass form, Theorem \ref{thm:goodprimes-folk} implies that $v(W_n)$ is asymptotically equal to $v(n)$ as a function of $n$ (on the non-zero terms).  Cheon and Hahn show, using a recurrence relation for EDS (see \eqref{eq:w-rec}), that for $P$ non-torsion having singular reduction, $v(W_n)$ is asymptotically equal to $Cn^2$ for some constant $C$ \cite{\CheonHahn}.  Everest and Ward use the elliptic Jensen formula to give a growth rate in this situation of
\[
\log | \Psi_n(P) |_v =  \left( \lambda_v(P) + \log | \Delta |_v /12 \right) n^2 + O(n^C),
\]
for some constant $0 < C < 2$ which may depend on $P$ \cite[Theorem 3]{MR1800354}.  Here, $\lambda_v(P)$ is the N\'eron local height (note that \cite{MR1800354} uses a different normalisation than ours; we follow Silverman \cite[Chapter VII]{\Siltwo}).  Everest and Ward use this result to give an algorithm for computing the canonical height of a point.  Theorem \ref{thm:growth} improves the error term on this estimate to $O(\log n)$; see Section \ref{subsec:growth}.

For any torsion point $P$, the $W_n$ are supported only on primes of bad reduction (see Remark \ref{remark:torsionreduction}).  Gezer and Bizim give some explicit descriptions of these valuations over $\QQ$ for $N < 13$ \cite[Theorem 2.2]{GezerBizim}.  Their formul{\ae} can be restated in terms of elliptic troublemaker sequences.  See Section \ref{subsec:gezerbizim}.

Sections \ref{sec:divpoly-back} and \ref{sec:eds-back} provide background.  Section \ref{sec:form} generalises the central lemma on formal groups that lies at the core of Theorem \ref{thm:goodprimes-folk}.  Sections \ref{sec:good}, \ref{sec:pot-good}, \ref{sec:mult} and \ref{sec:pot-mult} describe the valuation sequence $v(W_n)$ for each type of reduction.  Section \ref{sec:ets} considers the properties of elliptic troublemaker sequences.  Section \ref{sec:integral} proves Theorem \ref{thm:patrick-new}, while Section \ref{sec:applications} examines a few other connections and applications.  Finally, Section \ref{sec:examples} gives some detailed examples of elliptic divisibility sequences and their sequences of valuations.

\begin{acknowledgement}
        The author would like to thank the anonymous referees, Patrick Ingram, Dino Lorenzini, Val\'ery Mahe, Joseph H. Silverman, Paul Voutier and Jonathan Wise for helpful suggestions.  The examples in this paper were computed with Sage Mathematics Software \cite{sage}; scripts for computing elliptic divisibility sequences in Sage are available on the author's website \url{http://math.colorado.edu/~kstange/}.
\end{acknowledgement}

\section{Preliminaries on division polynomials}
\label{sec:divpoly-back}

In this section, we briefly catalogue some of the standard properties of division polynomials.  The proofs are largely computational, and are omitted.  For background, see especially \cite[Section 2]{\AyadS}, but also \cite[Chapter 9]{MR2228252}, \cite[Exercise III.3.7]{\Sil}.  Division polynomials are usually defined for elliptic curves, but here we will suppose only a cubic curve $E$ (possibly singular) given in standard Weierstrass form
\[
E: y^2 + a_1xy + a_3 y = x^3 + a_2 x^2 + a_4x + a_6.
\]
The division polynomials $\Psi_n \in \ZZ[a_1, a_2, a_3, a_4, a_6, x,y]$ for the curve $E$ are defined recursively using the initial values
\begin{align*}
  \Psi_1 &= 1, \\
  \Psi_2 &= 2y + a_1x + a_3, \\
  \Psi_3 &= 3x^4 + b_2x^3 + 3b_4x^2 + 3b_6x + b_8, \\
  \Psi_4 &= \Psi_2 \cdot \left( 2x^6 + b_2x^5 + 5b_4 x^4 + 10b_6x^3 + 10b_8x^2 \right. \\
  &\quad\quad\quad\quad\quad\quad\left.+ (b_2b_8 - b_4b_6)x + (b4b_8 - b_6^2)\right), 
\end{align*}
(here $b_i$ are the usual quantities \cite[Section III.1]{\Sil}) and the recurrences
\begin{equation}
  \label{eqn:psirecs}
  \begin{aligned}
  \Psi_{2m+1} &= \Psi_{m+2}\Psi_m^3 - \Psi_{m-1}\Psi_{m+1}^3, \quad \mbox{for $m \ge 2$} \\
  \Psi_{2m}\Psi_2 &= \Psi_{m-1}^2\Psi_m\Psi_{m+2} - \Psi_{m-2}\Psi_m\Psi_{m+1}^2, \quad \mbox{for $m \ge 3$}.
\end{aligned}
\end{equation}
If $\Psi_2 = 0$, then let $\Psi_{2m}=0$ for all $m$.  For an elliptic curve $E$, the $n$-th division polynomial vanishes at all non-trivial $n$-torsion points:  it has divisor $\sum_{Q \in E[n]} (Q) - n^2 (\mathcal{O})$.  (We will use $\mathcal{O}$ for the identity of an elliptic curve.)
\begin{proposition}
  \label{prop:divtorsion}
  Let $E$ be an elliptic curve.  Then $P$ is a non-trivial $n$-torsion point if and only if $\Psi_n(E,P) = 0$.
\end{proposition}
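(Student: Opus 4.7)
The plan is to reduce the proposition to the standard divisor computation for $\Psi_n$ viewed as a rational function on $E$, namely
\[
\div(\Psi_n) = \sum_{Q \in E[n] \setminus \{\mathcal{O}\}} (Q) - (n^2 - 1)(\mathcal{O}),
\]
which is essentially the statement made in the sentence just preceding the proposition (once one takes into account that $\Psi_n$ has a pole only at $\mathcal{O}$ of order $n^2-1$, the total degree of the divisor being zero). Once this divisor formula is available, the proposition is immediate: for $P \neq \mathcal{O}$, the condition $\Psi_n(P) = 0$ is equivalent to $P$ being a zero of the function $\Psi_n$, and the zeros of $\Psi_n$ on $E \setminus \{\mathcal{O}\}$ are precisely the non-identity $n$-torsion points.

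To establish the divisor formula, I would induct on $n$ using the recurrences \eqref{eqn:psirecs}. The base cases are checked directly: $\Psi_2 = 2y + a_1x + a_3$ cuts out $E[2]\setminus\{\mathcal{O}\}$ (the non-identity $2$-torsion being exactly the points fixed by the hyperelliptic involution $y \mapsto -y - a_1x - a_3$); $\Psi_3$ is the classical quartic in $x$ whose roots are the $x$-coordinates of non-identity $3$-torsion; and $\Psi_4$ factors through $\Psi_2$ times a polynomial vanishing on $E[4]\setminus E[2]$. The inductive step amounts to comparing divisors of both sides of \eqref{eqn:psirecs} and confirming that the orders of vanishing at each torsion point and the pole order at $\mathcal{O}$ match up; these recurrences are precisely constructed so that this balances.

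A slightly more conceptual route, which side-steps the case analysis at points of mixed torsion level, is to prove by simultaneous induction the multiplication-by-$n$ formula
\[
[n]P = \left( \frac{\phi_n(P)}{\Psi_n(P)^2},\ \frac{\omega_n(P)}{\Psi_n(P)^3} \right),
\]
together with the coprimality of $\phi_n$ (resp.\ $\omega_n$) with $\Psi_n$ as elements of the coordinate ring of $E$. Granting this, $[n]P = \mathcal{O}$ in $E(K)$ is equivalent to the right-hand side having a pole at $P$, which by coprimality reduces to $\Psi_n(P) = 0$.

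The main obstacle is purely organizational: when $Q$ has exact order $d$ properly dividing $n$, several of the factors $\Psi_{m-1}, \Psi_m, \Psi_{m+1}, \Psi_{m+2}$ on the right-hand sides of \eqref{eqn:psirecs} vanish simultaneously at $Q$, and one must verify that the orders of vanishing recombine correctly to give the expected multiplicity of $Q$ in $\div(\Psi_n)$. Since computations of precisely this flavour are flagged at the outset of the section as omitted, a sketch indicating the divisor formula and the inductive strategy should suffice in place of a full proof.
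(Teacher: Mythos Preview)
Your proposal is correct, and in fact there is nothing in the paper to compare it against: as you yourself noted, the section explicitly declares that ``the proofs are largely computational, and are omitted,'' and Proposition~\ref{prop:divtorsion} is stated without proof. The paper does record the divisor $\sum_{Q \in E[n]} (Q) - n^2(\mathcal{O})$ of $\Psi_n$ in the sentence immediately preceding the proposition, so your reduction of the statement to this divisor computation is exactly the intended logic; your inductive sketch via the recurrences \eqref{eqn:psirecs}, and the alternative route through the multiplication-by-$n$ formula together with coprimality of $\phi_n$ and $\Psi_n$, are both standard and sound.
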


There exist $\phi_n, \omega_n \in \ZZ[a_1, a_2, a_3, a_4, a_6, x,y]$ such that the multiplication-by-$n$ formul{\ae} for an elliptic curve are given by:
      \begin{equation*}
	\label{eqn:npprop}
      [n]P = \left( \frac{\phi_n}{\Psi_n^2}, \frac{\omega_n}{\Psi_n^3} \right).
    \end{equation*}
    In fact, $\phi_n$ and $\omega_n$ can be given by the following relations: 
    \begin{align*}
      \phi_n &= x \Psi_n^2 - \Psi_{n-1}\Psi_{n+1}, \\
      4y\omega_n &= \Psi_{n-1}^2 \Psi_{n+2} - \Psi_{n-2} \Psi_{n+1}^2.
    \end{align*}

    If we assign the natural weights 
    \begin{equation}
      \label{eqn:weights}
      w(x) = 2,\quad w(y) = 3,\quad  w(a_i) = i,
    \end{equation}
    then the Weierstrass equation is homogeneous of weight $6$.  Any change of coordinates between Weierstrass equations of the form
    \[
    x' = u^2x, \quad y' = u^3y
    \]
    changes the coefficients according to $a_i' = u^ia_i$ and $\Delta' = u^{12}\Delta$.  These weights are useful in determining the valuations of division polynomials.

\begin{proposition}
  \label{prop:divpoly}  The division polynomials $\Psi_n$ have the following properties.
  \begin{enumerate}[label=(\roman{*}), ref=(\roman{*})]

    \item \label{item:homog}  Using the natural weights \eqref{eqn:weights}, $\Psi_n$, $\phi_n$, and $\omega_n$ are homogeneous of weight $n^2-1$, $2n^2$ and $3n^2$, respectively.  
    \item \label{item:polyform}  As polynomials in $x$,
      \begin{align*}
	\Psi_n^2 &= n^2 x^{n^2-1} + (\mbox{lower order terms}) \in \ZZ[a_1, a_2, a_3, a_4, a_6, x], \\
	\phi_n &= x^{n^2} + (\mbox{lower order terms}) \in \ZZ[a_1, a_2, a_3, a_4, a_6, x].
    \end{align*}
  \item \label{item:valsdivpoly} If $E$ is given by a $v$-integral Weierstrass equation, where $v$ is a non-archimedean valuation, and $v(x), v(y) < 0$, then $v(\phi_n) = n^2v(x)$.
  \item \label{item:homothety}  The change of variables $x' = u^2x + r$ and $y' = u^3y + sx + t$ from $E$ to $E'$ gives
    \[
    \Psi_n(x',y',E') = u^{n^2-1}\Psi_n(x,y,E).
    \]
  \item \label{item:div}  Whenever $n \mid m$ for $n,m\ge 1$, we have $\Psi_n \mid \Psi_m$.
  \end{enumerate}
\end{proposition}

The division polynomials satisfy the more general recurrence equation
\begin{equation*}
  \label{eqn:wnrec}
  \Psi_{n+m+s}\Psi_{n-m}\Psi_{r+s}\Psi_r + 
  \Psi_{m+r+s}\Psi_{m-r}\Psi_{n+s}\Psi_n + 
  \Psi_{r+n+s}\Psi_{r-n}\Psi_{m+s}\Psi_m = 0,
\end{equation*}
from which the recurrences \eqref{eqn:psirecs} can be obtained as special cases \cite[Theorem 3.7]{Stange10}.

Finally, using the Weierstrass $\sigma$-function and the usual complex uniformization $\CC/\Lambda$ of an elliptic curve over $\CC$, Ward showed that \cite[Theorem 12.1]{MR0023275},
\begin{equation}
  \label{eqn:cpsi}
\Psi_n(z, \Lambda) =  \frac{\sigma(nz, \Lambda)}{\sigma(z,\Lambda)^{n^2}}.
\end{equation}

\section{Preliminaries on elliptic divisibility sequences}
\label{sec:eds-back}

\begin{definition}
  \label{defn:w}
  An \emph{elliptic divisibility sequence}, or EDS, is a sequence $W_n$ in an integral domain\footnote{One could define such sequences in a more general context, but we will be concerned only with local and global fields.} satisfying
\begin{equation}
  \label{eq:w-rec}
W_{n+m}W_{n-m}W_r^2 + W_{m+r}W_{m-r}W_n^2 + W_{r+n}W_{r-n}W_m^2 = 0.
\end{equation}
\end{definition}

The connection between EDS and elliptic curves is described by Ward in his original memoir on the subject.  We state an updated version of Ward's theorem which applies to fields of characteristic zero and cubic curves:

\begin{theorem}[{Ward \cite[Theorem 12.1]{MR0023275}, Shipsey \cite[Theorem 4.5.3]{Shipsey00}, S. \cite{Stange10}}]
  \label{thm:w}
Let $E$ be a cubic curve defined over a field $K$ of characteristic zero, given by Weiestrass form, and let $P \in E(K)$.  Then $W_n = \Psi_n(P)$ is an elliptic divisibility sequence.  
Furthermore, if $W_n \in K$ is an elliptic divisibility sequence with $W_2W_3 \neq 0$, $W_1=1$, then there exists a cubic Weierstrass curve $E/K$ and $P \in E(K)$ so that $W_n = \Psi_n(P)$.
\end{theorem}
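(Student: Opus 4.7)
The plan is to prove the two halves of the theorem separately, both by exploiting the complex-analytic description \eqref{eqn:cpsi} of $\Psi_n$ in terms of the Weierstrass $\sigma$-function.

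\textbf{Forward direction.} I would begin with $K=\CC$, where $E$ is uniformized as $\CC/\Lambda$ and $P$ corresponds to some $z \in \CC$. Substituting \eqref{eqn:cpsi} into the putative EDS relation \eqref{eq:w-rec} and clearing the common denominator $\sigma(z)^{2(n^2+m^2+r^2)}$, the identity to be verified reduces to
\[
\sigma((n{+}m)z)\sigma((n{-}m)z)\sigma(rz)^2 + \sigma((m{+}r)z)\sigma((m{-}r)z)\sigma(nz)^2 + \sigma((r{+}n)z)\sigma((r{-}n)z)\sigma(mz)^2 = 0,
\]
which is the specialization $u=nz$, $v=mz$, $w=rz$ of the classical three-term $\sigma$-identity
\[
\sigma(u{+}v)\sigma(u{-}v)\sigma(w)^2 + \sigma(v{+}w)\sigma(v{-}w)\sigma(u)^2 + \sigma(w{+}u)\sigma(w{-}u)\sigma(v)^2 = 0.
\]
To pass to a general characteristic-zero field $K$, I would note that by Proposition \ref{prop:divpoly}\ref{item:homog}\ref{item:polyform} the EDS relation, when expanded in $a_1,\ldots,a_6,x,y$, is a polynomial identity in the ring $\ZZ[a_1,\ldots,a_6,x,y]/(F)$, where $F$ is the Weierstrass equation. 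Since this identity has been verified on a Zariski-dense set of $\CC$-points (varying the $a_i$ over $\CC$ and $z$ over $\CC/\Lambda$), it vanishes in the ring, and so holds over any characteristic-zero $K$.

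\textbf{Converse direction.} Given an EDS $W_n \in K$, I would reconstruct $C$, $E$, and $P$ from the low-order data $W_1,W_2,W_3,W_4$. Setting $C = W_1$, the equations
\[
W_i = C \, \Psi_i(a_1,\ldots,a_6,x,y), \quad i=2,3,4,
\]
form, via the explicit formulas given in Section \ref{sec:divpoly-back}, an algebraic system in $a_1,\ldots,a_6,x,y$. Imposing the normalization $a_1 = a_2 = a_3 = 0$ (always possible in characteristic $\ne 2,3$), the equation for $\Psi_2$ determines $y$, that for $\Psi_3$ determines a polynomial relation between $x$, $a_4$, $a_6$, and that for $\Psi_4$ supplies the second relation; choosing one free parameter yields a tentative $(E,P)$. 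The forward direction then gives an EDS $W'_n := C\,\Psi_n(P)$ agreeing with $W_n$ for $n \le 4$. Since the EDS recurrence \eqref{eq:w-rec} with $m=1$, $r=1$ expresses $W_{n+1}$ in terms of earlier $W_j$ whenever $W_2$ is nonzero, a straightforward induction forces $W_n = W'_n$ for all $n \ge 1$.

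\textbf{Main obstacle.} The most delicate point is the converse. One must confirm that the algebraic system for $(E,P)$ is genuinely solvable and produces a nonsingular curve (i.e.\ $\Delta \ne 0$), which one expects to follow from the hypothesis that $W_n$ is a nontrivial EDS. More seriously, if $W_2 = 0$ or some other early term vanishes, the inductive propagation breaks down; in that case I would fall back on the more general recurrence \eqref{eqn:wnrec}, selecting triples $(n,m,r)$ for which the relevant $W_j$ are nonzero, so that $W_n = W'_n$ can still be extended from the base cases. Handling these degenerate cases uniformly is the subtlest part of the argument.
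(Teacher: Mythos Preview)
The paper does not supply its own proof of this statement: it is quoted as a result of Ward and used as background, with the remark that the characteristic-zero generalisation ``follows from his proof.'' The forward implication is implicitly handled earlier in Section~\ref{sec:divpoly-back}, where the general recurrence \eqref{eqn:wnrec} for division polynomials is recorded (with a citation); the converse is left entirely to Ward's memoir. Your approach is essentially Ward's own: the $\sigma$-identity for the forward direction, and reconstruction of $(E,P)$ from the first few terms for the converse.

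One slip in your induction step: setting $m=r=1$ in \eqref{eq:w-rec} gives a vacuous identity, since the middle term carries $W_{m-r}=W_0=0$ and the two outer terms cancel via $W_{-k}=-W_k$. To propagate, take instead $m=2$, $r=1$ (or use the recurrences \eqref{eqn:psirecs} directly), which express $W_{2m}$ and $W_{2m+1}$ in terms of $W_{m-2},\dots,W_{m+2}$ and hence determine everything from $W_1,\dots,W_4$ when $W_2\ne 0$.

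Your identification of the main obstacle is accurate, and indeed the converse as stated here is slightly informal: the sequence $W_n=n$ satisfies \eqref{eq:w-rec} but arises from a singular Weierstrass cubic (the additive group), not an elliptic curve. Ward's original theorem imposes nondegeneracy hypotheses (e.g.\ $W_2W_3\ne0$ and a discriminant condition) to rule out such cases; absent those, one should read ``elliptic curve'' as ``possibly singular Weierstrass cubic.'' This does not affect anything downstream in the paper, which only uses the forward direction.
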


In other words, any non-degenerate EDS over $K$ appears as the sequence of division polynomials for some cubic Weierstrass curve $E/K$ evaluated at a point $P \in E(K)$.  (As \eqref{eq:w-rec} is homogeneous, we may first scale so that $W_1=1$.)  The term `divisibility' in {`elliptic divisibility sequence'} refers to Proposition \ref{prop:divpoly}\ref{item:div}.  In particular, any EDS arising from a rational point on an elliptic curve $E/\QQ$ in minimal Weierstrass form is an integer sequence with the property that $n \mid m \implies W_n \mid W_m$.

Let $E$ be an elliptic curve defined over $K$ (by this we will always mean that $E$ is given by a Weierstrass equation), and let $\mathcal{O} \neq P \in E(K)$.  Let $W_n$ be the elliptic divisibility sequence associated to $E$ and $P$.  If we change the Weierstrass equation for $E$, we may change the elliptic divisibility sequence.  For example, it will be convenient to change the equation to one in minimal Weierstrass form, so we can consider the reduction type.  Fortunately, the associated elliptic divisibility sequence changes in a simple fashion, as described by Proposition \ref{prop:divpoly}\ref{item:homothety}.  This immediately gives the following result, which will be important enough for the later results that we include it as a theorem.

\begin{theorem}
  \label{thm:change-to-minimal}
  Let $E$ be an elliptic curve defined over a $p$-adic field $K$ with valuation $v$, given by Weierstrass form, and let $\mathcal{O} \neq P \in E(K)$.  Let $W_n$ be the associated elliptic divisibility sequence.  Then there exists an isomorphism $\phi: E \rightarrow E'$, defined over $K$, to an elliptic curve $E'$ in minimal Weierstrass form.  Let $W_n'$ be the elliptic divisibility sequence associated to $\phi(P)$. Then there exists an $r_P \in \ZZ$ such that
  \begin{equation*}
    v(W_n) = (n^2-1)r_P + v(W_n').
    \label{eqn:change-to-minimal}
  \end{equation*}
\end{theorem}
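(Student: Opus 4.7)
The plan is short: this statement is essentially the observation that the valuation $v(W_n)$ transforms in a predictable way under a change of Weierstrass coordinates, and the transformation law is already packaged in Proposition \ref{prop:divpoly}\ref{item:homothety}. I would proceed in three quick steps.

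First, invoke the standard existence of a minimal Weierstrass model over the local field $K$ (e.g.\ Silverman, \emph{The Arithmetic of Elliptic Curves}, Chapter VII): there is an isomorphism $\phi: E \to E'$, defined over $K$, where $E'$ is in minimal Weierstrass form. Concretely, $\phi$ arises from a coordinate change of the form $x' = u^2 x + r$, $y' = u^3 y + sx + t$ with $u \in K^{\times}$ and $r, s, t \in K$. Second, apply Proposition \ref{prop:divpoly}\ref{item:homothety} to this coordinate change to obtain the identity
\[
\Psi_n(x', y', E') = u^{n^2 - 1}\, \Psi_n(x, y, E)
\]
as polynomials. Specializing $(x,y)$ to the coordinates of $P$, so that $(x', y')$ specialize to the coordinates of $\phi(P)$, gives $W_n' = u^{n^2 - 1} W_n$.

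Third, take the valuation $v$ of both sides, set $r_P := -v(u) \in \ZZ$ (which is indeed an integer since $u \in K^{\times}$), and rearrange to obtain $v(W_n) = (n^2 - 1) r_P + v(W_n')$, as claimed. Note that $r_P$ depends only on the chosen isomorphism $\phi$ (equivalently, on the choice of minimal model), not on $n$ or even on $P$; the subscript simply records the point we are tracking.

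There is no real obstacle: the only compatibility to verify is that the polynomial identity in Proposition \ref{prop:divpoly}\ref{item:homothety} survives specialization to the specific $K$-rational point $P$, which is immediate. It is also worth emphasizing that $W_n = 0$ if and only if $W_n' = 0$ (both encode $[n]P = \mathcal{O}$), so the formula holds in the extended sense $v(0) = \infty$ without any case analysis on torsion.
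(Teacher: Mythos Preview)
Your proposal is correct and follows exactly the route the paper indicates: the paper states just before the theorem that the result ``immediately'' follows from Proposition~\ref{prop:divpoly}\ref{item:homothety}, which is precisely the transformation law $\Psi_n(x',y',E') = u^{n^2-1}\Psi_n(x,y,E)$ you invoke, together with the standard existence of a minimal model over $K$. Your identification $r_P = -v(u)$ and the remark on the torsion case are both fine elaborations of what the paper leaves implicit.
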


\section{Notation}
\label{sec:notation}

Throughout the remainder of the paper (except in the last three sections, \ref{sec:integral} through \ref{sec:examples}), let $p$ be a prime, let $K$ be a finite extension of $\QQ_p$, and let $R$ be the ring of integers of $K$, with maximal ideal $\mathcal{M}$.  Let $v$ be a valuation for $K$, let $\pi$ be a uniformizer, and let $\mathbf{k}$ be the residue field.  Let $E$ be an elliptic curve defined over $K$, let $P \in E(K)$, and let $W_n$ be the EDS associated to $E$ and $P$.

\section{Central Lemma on Formal groups}
\label{sec:form}

For a point of non-singular reduction, the sequence of valuations $v(W_n)$ is controlled by the formal group of the elliptic curve.  For points of singular reduction, the sequence of valuations is partially controlled by the formal group of either the elliptic curve, or the multiplicative group, depending on the type of reduction.  In both cases, the results rely on a lemma describing the valuations of the multiples of a point in an abstract formal group.  Although the formula \eqref{eqn:vals-form} below is quite complicated, in most cases we encounter, the variable $j$ takes the value $0$, whereupon \eqref{eqn:vals-form} simply reduces to $v(z) + v(n)$.  For background on formal groups, see \cite[Chapter IV]{\Sil}. 

\begin{lemma}
\label{lemma:vals-form}
Let $\mathcal{F}$ be a one-parameter formal group defined over $R$, and let $z \in \mathcal{F}(\mathcal{M})$.  
There exist integers $b$, $j$, $h$, and $w \in \ZZ^{\ge 0} \cup \{ \infty \}$ such that
for all integers $n$, 
\begin{equation}
\label{eqn:vals-form}
v([n]z) 
= \left\{
\begin{array}{ll}
  b^jv(z) + \frac{b^j-1}{b-1}h + v(n) - jv(p) + w & \mbox{if }v(n) > jv(p) \\
b^{v(n)/v(p)}v(z) + \frac{b^{v(n)/v(p)} - 1}{b-1}h & \mbox{if }v(n) \le jv(p) \\
\end{array} \right. .
\end{equation}
Furthermore, 
\begin{enumerate}[label=(\roman{*}), ref=(\roman{*})]

  \item $b$ is the smallest power of $T$ with a coefficient not divisible by $p$ in the series $[p]T$, and $h$ is the valuation of said coefficient.   If no such integer exists, then $b=1$ and $h=0$.  Otherwise, $p \mid b$ and $b >1$.
  \item If $b = 1$, then $j=0$.  If $b \neq 1$, then $j$ is the smallest non-negative integer such that 
\[
v(p) \le b^j\left( (b-1)v(z) + h \right).
\]
\item \label{item:vals-w0} $w = 0$ unless $b>1$ and $v(p) = b^{j}\left( (b-1)v(z) + h \right)$, in which case
\[
w = v\left( \frac{[p^{j+1}]z}{([p^j]z)^p} \right) - h,
\]
which may be equal to $\infty$.
\end{enumerate}

\begin{remark}
  \label{remark:form-gps}
  For the formal additive group, given by $f(X,Y) = X+Y$, the series for multiplication-by-$m$ is $[m]T = mT$, so $b=1$, $h=j=w=0$, and therefore \eqref{eqn:vals-form} simplifies to $v([n]z) = v(z) + v(n)$.

  For the formal multiplicative group, given by $f(X,Y) = (X+1)(Y+1)-1$, multiplication-by-$m$ is 
  \[
  [m]T = (T+1)^m - 1 = T^m + mT^{m-1} + {{m}\choose{2}} T^{m-2} + \cdots + mT,
  \]
  so $b = p$ and $h=0$.
  	
  The formal group of an elliptic curve in standard Weierstrass form is given by 
  \begin{align*}
    f(X,Y) &= X + Y - a_1XY - a_2(X^2Y + XY^2) \\ &+ 2a_3(X^3Y+XY^3) + (a_1a_2 - 3a_3)X^2Y^2 + \ldots.
  \end{align*}
  In particular, it may occur that one or more of the conditions $h>0$, $b > p$ and $j \neq 0$ may hold, for example, over a highly ramified $2$-adic field.  See Examples \ref{example:identity} and \ref{example:sing-pot-good}.
\end{remark}
\end{lemma}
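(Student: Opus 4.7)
The plan is to reduce the theorem to computing the sequence $\alpha_k := v([p^k]z)$ for $k \ge 0$, and then iterate a two-term recursion coming from the expansion of $[p](T)$. For the reduction, I would observe that for $n = p^k m$ with $\gcd(m,p) = 1$, the series $[m]T = mT + O(T^2)$ has unit leading coefficient and hence preserves valuations on $\mathcal{F}(\mathcal{M})$ by the ultrametric inequality applied term-by-term; applied to $[p^k]z$, this gives $v([n]z) = \alpha_k$ with $k = v(n)/v(p)$. So it suffices to describe the sequence $\{\alpha_k\}$.

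Next I would expand $[p](T) = \sum_{i \ge 1} a_i T^i$ and evaluate at $z_k = [p^k]z$. By the definitions of $b$ and $h$, we have $v(a_i) \ge v(p)$ for $1 \le i < b$ and $v(a_b) = h < v(p)$. The individual terms contribute valuations $v(a_i) + i\alpha_k$: intermediate terms $1 < i < b$ contribute more than $v(p) + \alpha_k$ (so cannot beat the $i = 1$ term), while higher-order terms $i > b$ contribute at least $i\alpha_k$, which should exceed $h + b\alpha_k$ when $\alpha_k$ is sufficiently large relative to $h$ (see the obstacle below). After establishing this, the minimum of the terms is achieved by $i \in \{1, b\}$, giving the recursion
\[
\alpha_{k+1} = \min\{v(p) + \alpha_k,\ h + b\alpha_k\}
\]
with equality (no cancellation) when the two quantities are unequal, and $\alpha_{k+1}$ at least their common value (cancellation possible) when they are equal.

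Iterating from $\alpha_0 = v(z)$: while $v(p) > (b-1)\alpha_k + h$ the ``Frobenius branch'' $\alpha_{k+1} = b\alpha_k + h$ applies, giving the linear-recurrence solution $\alpha_k = b^k v(z) + \frac{b^k - 1}{b-1} h$, which is precisely the second case of the formula and holds for $k \le j$ by the defining property of $j$. At step $k = j$, either strict inequality $v(p) < (b-1)\alpha_j + h$ yields the clean switch $\alpha_{j+1} = v(p) + \alpha_j$ with $w = 0$, or equality yields a tie with cancellation excess $w = v([p^{j+1}]z) - p\alpha_j - h \ge 0$ (possibly $\infty$ precisely when $z$ is a $p^{j+1}$-torsion point of $\mathcal{F}$). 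From $k = j$ onward one has $(b-1)\alpha_{k+1} + h > v(p)$ strictly, so the linear branch $\alpha_{k+1} = \alpha_k + v(p)$ is self-sustaining; this yields $\alpha_k = \alpha_j + (k-j)v(p) + w$ for $k > j$, which matches the first case of the formula after substituting $k = v(n)/v(p)$. The degenerate case $b = 1$ (all $a_i$ divisible by $p$) forces $h = 0$ and $j = 0$ by inspection, giving the additive-type formula $v([n]z) = v(z) + v(n)$.

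The main obstacle, as I see it, is verifying that higher-order terms $i > b$ in the expansion of $[p](z_k)$ never achieve or tie the minimum valuation. When $h = 0$ (in particular the unramified setting, where the usual height identification $b = p^{\text{ht}}$ applies and $v(a_i) \ge v(p)$ for all $i < b$ with $v(a_b) = 0$) this is immediate: $i\alpha_k > b\alpha_k = b\alpha_k + h$ whenever $i > b$ and $\alpha_k > 0$. In the ramified case with $h > 0$, I would control the Newton polygon of $[p](T)$ past the vertex $(b, h)$ -- for instance by invoking Weierstrass preparation to bound the number of subsequent vertices -- and use the monotonic growth of $\alpha_k$ under iteration to keep $\alpha_k$ above all thresholds $h/(i - b)$ arising from those later vertices. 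I would also verify the remaining structural claims of the statement: that $p \mid b$ whenever $b > 1$ (from $v(a_1) = v(p) > h = v(a_b)$ together with the formal-group structure), that $w$ is a non-negative integer in the range where it is defined (using $b \ge p$), and that the equivalent characterizations of $j$ match.
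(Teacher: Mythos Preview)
Your strategy matches the paper's: reduce to $n = p^k$ via $[m]T = mT + O(T^2)$ for $\gcd(m,p)=1$, derive a two-branch recursion for $\alpha_k = v([p^k]z)$ from the expansion of $[p]T$, and iterate, treating the tie at $k=j$ separately. The iteration and the case analysis you describe are exactly what the paper does.

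Where you diverge is in handling your ``main obstacle.'' The paper does not use Newton polygons or Weierstrass preparation; instead it invokes the structural decomposition
\[
[p]T = p\,f(T) + g(T^p), \qquad f,g \in R[[T]],\quad f(T) = T + O(T^2),
\]
a standard fact about formal groups in residue characteristic $p$ (Silverman, \emph{Arithmetic of Elliptic Curves}, Corollary~IV.4.4), and then absorbs any $p$-divisible coefficients of $g$ into $pf$ so that every nonzero coefficient of $g$ lies outside $pR$. With this in hand, $b$ is simply the lowest degree appearing in $g(T^p)$ (so $p \mid b$ is automatic), and $h$ is the valuation of that leading coefficient. The payoff is that $v(pf(z)) = v(p) + v(z)$ exactly, so \emph{every} term at a degree not divisible by $p$---in particular all terms with $1 < i < b$ and all terms with $i > b$, $p \nmid i$---is absorbed into the $i=1$ branch of your minimum in one stroke. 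The only higher-order terms left to control are those from $g(T^p)$ at degrees $b+p,\, b+2p,\ldots$; because these jump in degree by $p$ rather than by $1$, the comparison becomes $p\,v(z)$ versus $h$ rather than $v(z)$ versus $h$. This decomposition is the missing ingredient that turns your heuristic recursion $\alpha_{k+1} = \min\{v(p)+\alpha_k,\; h + b\alpha_k\}$ (with equality away from the tie) into a statement the paper asserts directly from the shape of $[p]T$, without any further Newton-polygon bookkeeping.
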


\begin{proof}[Proof of Lemma \ref{lemma:vals-form}]
By \cite[Proposition IV.2.3(a)]{\Sil}, multiplication-by-$n$ has the form
\[
[n]T = nT + O(T^2).
\]
Suppose $n$ is coprime to $p$.  Since $v(z) > 0$, we obtain
\[
v([n]z) = v(z).
\]
Since $[m_1m_2]T = [m_1]([m_2]T)$, it therefore suffices to consider only $n$ equal to a power of $p$.  Let $a_k = v([p^k]z)$ for all non-negative $k$. 

By \cite[Corollary IV.4.4]{\Sil}, the formal group law for $[p]$ has the form
\begin{equation}
  \label{eqn:pt}
[p]T = pf(T) + g(T^p),
\end{equation}
where $f$ and $g$ have no constant term.  We may also assume that the coefficients in $g$ are not divisible by $p$.  By \cite[Proposition IV.2.3(a)]{\Sil},
\[
f(T) = T + O(T^2).
\]
Let $b \in \ZZ$ be the smallest power of $T$ in $g(T^p)$ with a non-zero coefficient, and let $h$ be the valuation of that coefficient (so, in particular, $0 \le h < v(p)$).  Let us momentarily skip the case that $g\equiv0$, so that we have $p \mid b$ and $b \ge p > 1$.  Define $j$ to be the smallest non-negative integer such that
\[
v(p) \le b^j\left( (b-1)v(z) + h \right).
\]
For the moment, let us also assume that the inequality is not an equality.

From \eqref{eqn:pt},
\begin{equation}
\label{eqn:min}
v([p]z) \ge \min \{ v(z) + v(p), bv(z)  + h\}
\end{equation}

Suppose that $j > 0$.  Then, since $v(p) > (b-1)v(z) + h$, the second option determines the minimum in \eqref{eqn:min}, in which the inequality is an equality, and so
\[
a_1 = ba_0 + h
\]

Repeating this argument for all $k \le j$, we find
\[
a_1 = ba_0 + h \implies a_2 = b^2a_0 + bh + h \implies \cdots \implies a_j = b^ja_0 + \frac{b^j-1}{b-1}h.
\]

For $k = j + 1$, we again obtain \eqref{eqn:min} (where we replace $z$ with $[p^{j}]z$), but $v(p) < b^j\left( (b-1)v(z) + h \right)$, so the first option determines the minimum, again where inequality is equality, which implies that
\[
a_{j+1} = a_j + v(p) = b^{j}a_0 + \frac{b^j-1}{b-1}h + v(p).
\]

Repeating this argument, we find that for all $k > j$,
\[
a_k = b^{j}a_0 + \frac{b^j-1}{b-1}h + (k-j)v(p),
\]
from which the result follows with $w = 0$.

Now suppose that $v(p) = b^{j}\left( (b-1)v(z) + h \right)$.  This gives $v(p) = (b-1)a_j + h$. The only place in which this affects the proof is the application of \eqref{eqn:min} for $k=j+1$.  
In this case, the minimum in \eqref{eqn:min} compares two equal values and we obtain instead the alternate form
\[
a_{j+1} = a_{j} + v(p) + w,
\]
for $w$ either $\infty$ (if $[p^{j+1}]z=0$) or a non-negative integer.  
If $w \neq \infty$, then we find that
\begin{align*}
w &= a_{j+1} - a_j - v(p) \\
  &= a_{j+1} - a_j - (b-1)a_j - h\\
  &= v([p^{j+1}]z) - bv([p^j]z) - h.
\end{align*}
For $k > j+1$,
\[
a_k = a_{k-1} + v(p)
\]
as before.  Combining this with the other cases yields the general formula.

Finally, we return to the case that $g \equiv 0$.  In this case, \eqref{eqn:min} is replaced with
\[
v([p]z) = v(z) + v(p),
\]
and we obtain the formula with $b=1$, $h=0$, $j=0$ and $w=0$.
\end{proof}

The formula in Lemma \ref{lemma:vals-form} being somewhat cumbersome, we set some notation for the class of such sequences.

\begin{definition}
\label{defn:sn}
Suppose $p \in \ZZ$ is a prime, and let $u$ be the valuation on $\QQ$ associated to $p$.  Suppose  
\[
b \in p\ZZ^{>0} \cup \{1\}, \quad d \in \ZZ^{>0}, \quad h \in \ZZ^{\ge0}, \quad s \in \ZZ^{> 0} \cup \{ \infty \}, \quad w \in \ZZ^{\ge 0} \cup \{ \infty \}.
\]
If $b=1$, set $j=0$.  Otherwise, let $j$ to be the smallest non-negative integer such that 
\[
d \le b^j\left( (b-1)s + h \right).
\]
Define a sequence in $\ZZ \cup \{ \infty \}$,
\begin{equation*}
\label{eqn:sn}
S_n(p,b,d,h,s,w) = \left\{
\begin{array}{ll}
  b^js + \frac{b^j-1}{b-1}h + d( u(n) - j) + w & u(n) > j \\
b^{u(n)}s + \frac{b^{u(n)}-1}{b-1}h & u(n) \le j \\
\end{array} \right. .
\end{equation*}
\end{definition}

We record a few properties whose proofs are immediate.

\begin{proposition}
  \label{prop:snprop}
  \begin{enumerate}[label=(\roman{*}), ref=(\roman{*})]
    \item \label{item:simple} If $j=0$, then $S_n(p,b,d,h,s,0) = s + du(n)$.
    \item \label{item:snhomog} For any integer $k$, $S_n(p,b,kd,kh,ks,kw) = kS_n(p,b,d,h,s,w)$.
    \item \label{item:sngrowth} For fixed integers $p$, $b$, $d$, $h$, $s$, and $w$, $S_n(p,b,d,h,s,w) = O(\log n)$.
  \end{enumerate}
\end{proposition}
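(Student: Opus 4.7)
The plan is to verify each part directly from Definition \ref{defn:sn} by a case analysis on whether $u(n) > j$ or $u(n) \le j$.

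For part \ref{item:simple}, I would substitute $j=0$ and $w=0$ into both branches of the definition. When $u(n) > 0$ one reads off $b^{0}s + \frac{b^{0}-1}{b-1}h + d(u(n)-0) + 0 = s + du(n)$ (the middle term collapses since $b^{0}=1$). When $u(n) = 0$, the second branch gives $b^{0}s + 0 = s$, which also equals $s + du(n)$ since $u(n)=0$. Thus both cases agree with the closed form.

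For part \ref{item:snhomog}, the key observation is that scaling $(d,h,s)$ by a positive integer $k$ leaves the defining inequality $d \le b^j((b-1)s + h)$ unchanged (both sides scale by $k$), so the auxiliary index $j$ computed from $(p,b,kd,kh,ks,kw)$ is the same as the one computed from $(p,b,d,h,s,w)$. Once $j$ is fixed, the two branches of $S_n$ are $\ZZ$-linear in $(s,h,d,w)$, so the factor of $k$ can be pulled out of each branch. (One should note implicitly that the statement is intended for $k$ positive, since the parameter constraints in Definition \ref{defn:sn} require $d,s>0$ and $h,w\ge 0$.)

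For part \ref{item:sngrowth}, I would bound each branch separately with all parameters viewed as constants. In the second branch ($u(n) \le j$), the exponent $u(n)$ is bounded by the constant $j$, so $S_n$ is uniformly bounded. In the first branch ($u(n) > j$), every term except $d\cdot u(n)$ is a constant, and the standard estimate $u(n) \le \log n / \log p$ (which holds since $p^{u(n)} \mid n$ implies $p^{u(n)} \le |n|$) yields $d \cdot u(n) = O(\log n)$. Combining the two branches gives the claimed growth rate. There is no real obstacle here; the only minor subtlety is confirming that scaling in part \ref{item:snhomog} preserves $j$, which is what makes the scaling work cleanly across the piecewise definition.
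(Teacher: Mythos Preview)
Your proposal is correct and is essentially the intended argument: the paper itself omits the proof, saying only that the properties are immediate from Definition~\ref{defn:sn}. Your case analysis on $u(n)>j$ versus $u(n)\le j$, together with the observation that scaling $(d,h,s,w)$ by $k>0$ preserves $j$, is exactly the kind of direct verification the paper has in mind.
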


\section{Non-singular reduction}
\label{sec:good}

The sequence $v(W_n)$ for a point of non-zero non-singular reduction has been described in various contexts in \cite[Theorem 1]{MR1654780} (but see Remark \ref{remark:thm-errors}), \cite[Lemma]{\CheHahTwo}, \cite[Lemma 2.6]{MR2377127}, \cite[Lemma 5]{MR2747036}, \cite[Lemma 3.4]{MR2377368}.  Loosely speaking, in most cases one expects that for non-torsion points with non-singular reduction of order $n_P>1$,
\begin{equation}
  \label{eqn:loosely}
v(W_n) = \left\{ \begin{array}{ll}
v(W_{n_P}) + v(n/n_P) & \mbox{if } n_P \mid n \\
0 & \mbox{if } n_P \nmid n \\
\end{array} \right. .
\end{equation}
There are exceptions, however.  Lemma \ref{lemma:vals-form} on formal groups allows us to prove a somewhat stronger, more general statement.  Please refer to Definition \ref{defn:sn} for the sequence $S_n$, which generalises \eqref{eqn:loosely}.

\begin{theorem}
\label{thm:non-sing}
Assume that $E$ is in minimal Weierstrass form, $P$ has non-singular reduction, and let $n_P$ be the smallest non-negative integer such that $\widetilde{[n_P]P} = \widetilde{\mathcal{O}}$ over the residue field $\mathbf{k}$.  There exist 
\[
b_P \in p\ZZ^{>0} \cup \{ 1 \}, \quad h_P \in \ZZ^{\ge 0}, \quad s_P \in \ZZ^{>0} \cup \{ \infty \}, \quad w_P \in \ZZ^{\ge 0} \cup \{ \infty \},
\]
such that
\begin{equation}
\label{eqn:non-sing}
v(W_n) =  
 \min\left\{ 0, \frac{v(x(P))}{2} \right\}n^2 +
\left\{ \begin{array}{ll}
  S_{n/n_P}(p,b_P,v(p),h_P,s_P,w_P) & \mbox{if } n_P \mid n \\
0 & \mbox{if } n_P \nmid n \\
\end{array}
\right..
\end{equation}
Furthermore, $v(x(P)) < 0$ if and only if $n_P = 1$.
\end{theorem}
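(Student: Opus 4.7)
The plan is to split into three cases based on the reduction of $P$: (i) $n_P = 1$ (equivalently $P \in E_1(K)$); (ii) $n_P > 1$ with $n_P \nmid n$; and (iii) $n_P > 1$ with $n_P \mid n$. In every case the formal group Lemma \ref{lemma:vals-form} supplies the arithmetic, once I reduce the valuation of $W_n$ to a multiplication-by-$n$ computation in the formal group $\widehat E$ of $E$.

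In case (i), $v(x(P)) < 0$ and the formal parameter $z = -x(P)/y(P)$ satisfies $v(z) = -v(x(P))/2$. Applying Lemma \ref{lemma:vals-form} to $z \in \widehat E(\mathcal M)$ yields integers $b, h, w$ intrinsic to the formal group of $E$ with $v([n]z) = S_n(p, b, v(p), h, v(z), w)$. Combining the identity $x([n]P) = \phi_n(P)/W_n^2$ with Proposition \ref{prop:divpoly}\ref{item:valsdivpoly} (which gives $v(\phi_n(P)) = n^2 v(x(P))$ since $v(x(P)) < 0$), I obtain
\[
2v(W_n) \;=\; v(\phi_n(P)) - v(x([n]P)) \;=\; n^2 v(x(P)) + 2v([n]z),
\]
which is exactly the theorem's formula with $s_P = v(z)$, $b_P = b$, $h_P = h$, $w_P = w$; the ``$n_P \mid n$'' branch holds vacuously since $n_P = 1$.

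In case (ii), $\widetilde{[n]P}$ is a non-identity point of $\widetilde E_{\mathrm{ns}}(\mathbf k)$, so $\tilde x(\widetilde{[n]P})$ is finite. Reducing $\phi_n(P) = x([n]P)\,W_n^2$ modulo $\pi$, the assumption $v(W_n) > 0$ would force $\tilde\phi_n(\tilde P) = 0$, contradicting finiteness of $\tilde x(\widetilde{[n]P})$ on the smooth locus; hence $v(W_n) = 0$, matching the ``$n_P \nmid n$'' branch. In case (iii), write $n = n_P m$ and set $Q = [n_P]P \in E_1(K)$. The sigma-function formula \eqref{eqn:cpsi} gives the multiplicative identity $\Psi_{n_P m}(P) = \Psi_m(Q)\,\Psi_{n_P}(P)^{m^2}$. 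From case (ii), $v(W_{n_P \pm 1}) = 0$, so $\phi_{n_P}(P) = x(P)W_{n_P}^2 - W_{n_P - 1}W_{n_P + 1}$ has valuation $0$ (there is no cancellation because $v(x(P)W_{n_P}^2) > 0$), and hence $v(z(Q)) = -v(x(Q))/2 = v(W_{n_P})$. Applying case (i) to $Q$ gives $v(\Psi_m(Q)) = -m^2 v(W_{n_P}) + S_m(p, b, v(p), h, v(W_{n_P}), w)$, and combining with the multiplicative identity yields $v(W_n) = S_m(p, b, v(p), h, v(W_{n_P}), w)$, matching the formula with $s_P = v(W_{n_P})$ and the same $b_P, h_P, w_P$ as in case (i). The closing equivalence $v(x(P)) < 0 \iff n_P = 1$ is the standard description of the kernel of reduction $E_1(K)$.

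The principal obstacle is case (ii) when $E$ has bad reduction: then $\widetilde\Psi_n$ is no longer literally the division polynomial of a smooth curve, so Proposition \ref{prop:divtorsion} does not directly apply to $\widetilde E$, and I must verify $\widetilde\Psi_n(\tilde P) \ne 0$ by working entirely inside the function field of $\widetilde E_{\mathrm{ns}}$ via the polynomial identities for $\phi_n$ and $W_n^2$, using Proposition \ref{prop:divpoly}\ref{item:polyform} to rule out leading-coefficient cancellation in characteristic $p$.
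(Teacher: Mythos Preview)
Your cases (i) and (iii) are sound and run parallel to the paper's argument, though in (iii) you take a pleasant detour through the $\sigma$-function identity $\Psi_{n_Pm}(P)=\Psi_m(Q)\,\Psi_{n_P}(P)^{m^2}$ and then invoke case (i) for $Q=[n_P]P$, whereas the paper computes $v(\Theta([kn_P]P))=v(\Psi_{kn_P})$ directly. Both routes need the same input, namely $v(\phi_{n_P})=0$, which you correctly obtain from case (ii) applied to $n_P\pm 1$. One small wording issue: the $w$ coming out of Lemma~\ref{lemma:vals-form} depends on the base point, so ``the same $w_P$ as in case (i)'' is not literally meaningful; you simply take $w_P$ to be the value the lemma produces for $z(Q)$.

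The genuine gap is in case (ii). Your sentence ``the assumption $v(W_n)>0$ would force $\tilde\phi_n(\tilde P)=0$, contradicting finiteness of $\tilde x(\widetilde{[n]P})$'' does not yield a contradiction: if both $\tilde\phi_n(\tilde P)$ and $\tilde\Psi_n(\tilde P)$ vanish you have a $0/0$ indeterminacy, which is entirely compatible with $x([n]\tilde P)$ being finite after cancellation. What you actually need is that for $\tilde P$ non-singular and non-identity, $v(\phi_n)$ and $v(\Psi_n)$ cannot both be positive. This is exactly Ayad's Theorem~A \cite[Theorem A]{MR1185022}, which the paper cites at this point. Your proposed workaround via Proposition~\ref{prop:divpoly}\ref{item:polyform} does not do the job: knowing the leading terms of $\phi_n$ and $\Psi_n^2$ in $x$ says nothing about whether they share a root at the specific value $\tilde x(\tilde P)$, and when $p\mid n$ the leading coefficient of $\Psi_n^2$ even vanishes in characteristic $p$. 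Ayad's theorem is a non-trivial statement (proved by an induction exploiting the recurrences among $\phi_n$, $\Psi_n$, and $\omega_n$), and you should either cite it or reproduce its proof; the sketch you give is not a substitute.
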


\begin{corollary}
\label{cor:torsion}
Assume that $P$ is a non-trivial \emph{torsion} point with non-singular reduction to a point of order $n_P > 1$.  Suppose that $E$ is a minimal Weierstrass model.  Then
\begin{equation*}
\label{eqn:non-sing-tors}
v(W_n) =
\left\{ \begin{array}{ll}
\infty & \mbox{if } n_P \mid n \\
0 & \mbox{if } n_P \nmid n \\
\end{array}
\right. .
\end{equation*}
\end{corollary}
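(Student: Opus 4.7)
The plan is to deduce the corollary directly from Theorem \ref{thm:non-sing} by observing how its parameters degenerate when $P$ is a torsion point. First, because $n_P > 1$, the reduction $\widetilde P$ is not the identity, so $P$ does not lie in the kernel of reduction. Hence $x(P)$ is $v$-integral, $v(x(P)) \ge 0$, and $\min\{0, v(x(P))/2\} = 0$, which eliminates the $n^2$ term from \eqref{eqn:non-sing}. For $n_P \nmid n$, Theorem \ref{thm:non-sing} then directly gives $v(W_n) = 0$.

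The remaining task is to handle $n_P \mid n$, where the goal becomes to show $S_{n/n_P}(p, b_P, v(p), h_P, s_P, w_P) = \infty$. Tracing the parameters of Theorem \ref{thm:non-sing} through Lemma \ref{lemma:vals-form}, the parameter $s_P$ is identified as the $v$-valuation of the formal-group parameter attached to $[n_P]P$, a point in the kernel of reduction. Since $P$ has order $n_P$ as a torsion point, $[n_P]P = \mathcal{O}$, so this formal parameter vanishes and $s_P = \infty$. Direct inspection of Definition \ref{defn:sn} then shows that $S_m$ is $\infty$ whenever $s = \infty$, regardless of the values of $b$, $d$, $h$, and $w$, so $v(W_n) = \infty$ in this case.

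The main obstacle is justifying that the order of $P$ genuinely matches the reduction order $n_P$, i.e.\ that $[n_P]P = \mathcal{O}$; a priori the true order of $P$ could be a proper multiple such as $n_P p^k$ with $k \ge 1$, in which case $[n_P]P$ would be nontrivial torsion inside the formal group and the identification $s_P = \infty$ would fail. This is handled by appealing to the torsion properties of $\hat{E}(\mathcal{M})$ in the non-singular-reduction setting, or, cleaner still, by invoking Proposition \ref{prop:divtorsion} directly, entirely bypassing Theorem \ref{thm:non-sing} on the divisible indices: once $P$ is known to have order exactly $n_P$, it is a nontrivial $n$-torsion point for every multiple $n$ of $n_P$, and so $W_n = \Psi_n(P) = 0$ and $v(W_n) = \infty$ follow at once.
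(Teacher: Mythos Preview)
Your approach is exactly the paper's: Theorem~\ref{thm:non-sing} with $n_P > 1$ gives $v(W_n) = 0$ for $n_P \nmid n$ (since the $n^2$ term vanishes), and $s_P = v(W_{n_P}) = \infty$ then gives $v(W_n) = \infty$ for $n_P \mid n$. The paper's two-sentence proof simply asserts $W_{n_P} = 0$ via Proposition~\ref{prop:divtorsion} and stops there.

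Your third paragraph flags a genuine subtlety the paper glosses over: Proposition~\ref{prop:divtorsion} yields $W_{n_P} = 0$ only if $[n_P]P = \mathcal{O}$, i.e., only if the true order $N$ of $P$ equals $n_P$ rather than a proper multiple $n_P p^k$. However, your proposed resolution does not actually close the gap. The appeal to ``torsion properties of $\hat{E}(\mathcal{M})$'' works only when the formal group is torsion-free (for instance when $v(p) < p-1$, so in particular for unramified $K/\QQ_p$ with $p$ odd), but Example~\ref{example:2-torsion} in this very paper exhibits nontrivial $2$-torsion in $\hat{E}(\mathcal{M})$. And your alternative route through Proposition~\ref{prop:divtorsion} still presupposes $N = n_P$, which is precisely what is in question. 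In fact, if one could arrange $N > n_P$ then $W_{n_P} \ne 0$ and the corollary as literally stated would fail at $n = n_P$; so the hypothesis should be read as implicitly asserting that $P$ itself has order $n_P$ (which is how the paper's proof uses it), or else one must restrict to situations where $\hat{E}(\mathcal{M})$ is torsion-free.
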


\begin{remark}
  This corollary implies that the non-zero terms of an elliptic divisibility sequence associated to an integral torsion point $P$ are supported only on the primes of bad reduction.  If $P$ is a non-integral torsion point (necessarily of order $2$), then the non-zero terms of the EDS are supported on primes of bad reduction, and $2$.  See Example \ref{example:2-torsion}.
  \label{remark:torsionreduction}
\end{remark}

\begin{corollary}
\label{cor:non-sing-nice}
Assume that $P$ is a \emph{non-torsion} point with non-singular reduction to a point of order $n_P > 1$.  Suppose that $E$ is a minimal Weierstrass model.  Under any of the three below listed conditions,
\begin{equation}
\label{eqn:non-sing-nice}
v(W_n) =
\left\{ \begin{array}{ll}
 v(W_{n_P}) + v(n/n_P) & \mbox{if } n_P \mid n \\
0 & \mbox{if } n_P \nmid n \\
\end{array}
\right. .
\end{equation}
\begin{enumerate}[label=(\roman{*}), ref=(\roman{*})]

\item $v(p) < (p-1)v(W_{n_P})$.
\item $K=\QQ_p$,  and we are not in the special case that $p=2$, $v(W_{n_P})=1$ and $E$ has ordinary or multiplicative reduction.
\item $K$ is unramified over $\QQ_p$ and $p \ge 3$.
\end{enumerate}
\end{corollary}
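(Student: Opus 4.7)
The strategy is to invoke Theorem \ref{thm:non-sing} and show that, under each of the three hypotheses, the parameters $j$ and $w$ governing the sequence $S_{n/n_P}$ from Definition \ref{defn:sn} both vanish; Proposition \ref{prop:snprop}\ref{item:simple} will then collapse the general formula to the asserted one.

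First I would use the last sentence of Theorem \ref{thm:non-sing}, namely $v(x(P)) < 0 \iff n_P = 1$, to conclude that the $\min\{0, v(x(P))/2\} n^2$ term vanishes because $n_P > 1$. Setting $n = n_P$ in what remains forces $u(1) = 0 \le j$, so the ``small $u(n)$'' branch of Definition \ref{defn:sn} evaluates $S_1$ to $s_P$, giving the identification $s_P = v(W_{n_P})$ (automatically positive and finite, since $[n_P]P$ reduces to $\widetilde{\mathcal{O}}$ but $P$ is non-torsion). Since $v(p)\, u(n/n_P) = v(n/n_P)$ whenever $n_P \mid n$, Proposition \ref{prop:snprop}\ref{item:simple} will then reduce $S_{n/n_P}$ to exactly $v(W_{n_P}) + v(n/n_P)$; the case $n_P \nmid n$ is immediate from Theorem \ref{thm:non-sing}.

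Next I would verify $j = 0$ and $w = 0$ case by case. By Definition \ref{defn:sn}, $j = 0$ is equivalent to $v(p) \le (b_P - 1) s_P + h_P$, and by Lemma \ref{lemma:vals-form}\ref{item:vals-w0}, the extra condition $w = 0$ needs either $b_P = 1$ (in which case both hold trivially) or this inequality to be strict. Assume henceforth $b_P \ne 1$, so $b_P \ge p$. Under (i), $v(p) < (p - 1) v(W_{n_P}) = (p-1) s_P \le (b_P - 1) s_P + h_P$, and the inequality is strict. Under (iii), $v(p) = 1$ by unramifiedness, and $p \ge 3$ yields $(b_P - 1) s_P + h_P \ge 2 > 1$. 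Under (ii), $v(p) = 1$; the subcase $p \ge 3$ is handled as in (iii), while for $p = 2$ we have $b_P \ge 2$ and $s_P \ge 1$, so $(b_P - 1) s_P + h_P \ge 1$ with strict inequality unless $(b_P, s_P, h_P) = (2, 1, 0)$.

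The main obstacle is this $p = 2$ bookkeeping: I must check that the numerical case $(b_P, h_P) = (2, 0)$ corresponds exactly to $E$ having ordinary or multiplicative reduction. Expanding $[2]T$ in the formal group of $E$ handles this: multiplicative reduction gives $[2]T = 2T + T^2$ directly, ordinary elliptic reduction forces the $T^2$-coefficient to be a $2$-adic unit (so $b_P = 2$, $h_P = 0$), and supersingular reduction instead gives $b_P = 4$. Combined with $s_P = v(W_{n_P}) = 1$, this matches precisely the exceptional configuration excluded in (ii), and outside of it strict inequality holds, completing the verification.
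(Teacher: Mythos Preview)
Your plan is correct and tracks the paper's proof closely: both reduce to showing $j=0$ and $w_P=0$ in Theorem~\ref{thm:non-sing} and then invoke Proposition~\ref{prop:snprop}\ref{item:simple}, and the treatment of conditions (i) and (iii) is essentially identical.

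One inaccuracy to fix in the $p=2$ discussion: the assertion ``multiplicative reduction gives $[2]T = 2T + T^2$ directly'' is not literally true. The formal group $\hat E$ is only \emph{isomorphic} to $\hat{\mathbb G}_m$ under multiplicative reduction; in Weierstrass coordinates one has $[2]T = 2T - a_1T^2 - 2a_2T^3 + \cdots$. What you actually need is that the $T^2$-coefficient $-a_1$ is a $2$-adic unit. Over $\QQ_2$ one checks $c_4 \equiv a_1^4 \pmod 2$, so $v(c_4)=0$ (multiplicative reduction) forces $v(a_1)=0$; likewise ordinary good reduction at $p=2$ is characterised by $v(a_1)=0$. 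Thus $b_P=2$ precisely in the ordinary/multiplicative case, while supersingular gives height $2$ (so $b_P=4$) and additive gives $\hat E \bmod 2 \cong \hat{\mathbb G}_a$ (so $b_P=1$, already handled). With this correction your argument goes through.

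For comparison, the paper handles the borderline $p=2$, $v(W_{n_P})=1$ case slightly differently: rather than classifying $b_P$ by reduction type, it computes $w_P = v([2]z/2z)$ directly from the series $[2]z = 2z - a_1z^2 + O(z^3)$ and observes (using that $\FF_2$ has a unique unit) that $w_P>0 \iff v(a_1)=0$, then cites the equivalence $v(a_1)=0 \iff$ ordinary or multiplicative. The two routes amount to the same computation viewed from opposite ends.
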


\begin{remark}
  \label{remark:thm-errors}
  The statement of \cite[Theorem 1]{MR1654780} corresponding to this paper's Theorem \ref{thm:non-sing} is incorrect, in that it holds only under the missing assumption that $E$ is in minimal Weierstrass form.  This assumption is required when applying \cite[Proposition VII.2.2]{\Sil} during the proof of their Lemma 1.  Furthermore, they give the simpler form \eqref{eqn:non-sing-nice} while neglecting to include the assumption that $v(p) < (p-1)v(W_{n_P})$ or an equivalent.  These omissions are corrected in the later paper \cite{\CheHahTwo}.
\end{remark}

We now prove Theorem \ref{thm:non-sing} and its corollaries.

\begin{proof}[Proof of Theorem \ref{thm:non-sing}]

  There is a standard isomorphism of groups (see \cite[Proposition VII.2.2]{\Sil})
\[
\Theta: E_1(K) \rightarrow \hat{E}(\mathcal{M}), \quad (x,y) \mapsto -\frac{x}{y},
\]
where $\hat{E}(\mathcal{M})$ is the formal group of $E$.  Write
      \begin{equation}
	\label{eqn:np}
      [n]P = \left( \frac{\phi_n}{\Psi_n^2}, \frac{\omega_n}{\Psi_n^3} \right)
    \end{equation}
as in Section \ref{sec:divpoly-back}.

We begin with the case that $n_P = 1$, i.e. $\widetilde{P} = \widetilde{\mathcal{O}}$.  Writing $P = (x,y)$, we have $n_P = 1$ if and only if $v(x) < 0$.  From the minimal Weierstrass equation for $E$, we obtain $2v(x) = 3v(y)$.  Let $v_0 = v(y) - v(x) = \frac{1}{2}v(x)$.  Then $v(x) = 2v_0$ and $v(y) = 3v_0$.  Since $v_0 < 0$, by Proposition \ref{prop:divpoly}\ref{item:valsdivpoly}, $v(\phi_n) = 2n^2v_0$, from which we obtain
  \[
  v(\Theta([n]P)) = v(x/y) = -\frac{1}{2}v(x) = v(\Psi_n) - n^2v_0.
  \]
  Then, 
  \begin{equation}
    \label{eqn:np1}
  v(\Psi_n) = \frac{v(x(P))}{2}n^2 + v(\Theta([n]P)). 
\end{equation}

Now suppose that $\widetilde{P} \neq \widetilde{\mathcal{O}}$ instead.  Then we have $v(\Psi_n), v(\phi_n), v(\omega_n) \ge 0$ (by the definition of the division polynomials).  A theorem of Ayad \cite[Theorem A]{\AyadS} implies that since $P$ has non-zero non-singular reduction, at least one of $v(\Psi_n)$ and $v(\phi_n)$ is zero for each $n$.  It follows that $n= n_P$ is the smallest positive $n$ such that $v(\Psi_{n}) > 0$, and that $v(\phi_{kn_P}) = 0$.  This implies $v(\omega_{kn_P}) = 0$ by \eqref{eqn:np} and the form of the Weierstrass equation.  Therefore for all integers $k$,
\begin{equation}
  \label{eqn:np2}
v(\Theta([kn_P]P)) = v(-\phi_{kn_P}\Psi_{kn_P}/\omega_{kn_P}) = v(\Psi_{kn_P}).
\end{equation}

Now, in both cases ($n_P=1$ and $n_P \neq 1$), Lemma \ref{lemma:vals-form}, written in terms of Definition \ref{defn:sn}, says
\[
v(\Theta([kn_P]P)) = S_k(p,b_P,v(p),h_P,v(\Theta([n_P]P)),w_P)
\]
for some $b_P$, $h_P$, and $w_P$.  If $n_P \nmid n$, then $v(\Psi_n) = 0$.  This, combined with \eqref{eqn:np1} and \eqref{eqn:np2}, completes the proof.
\end{proof}

\begin{proof}[Proof of Corollary \ref{cor:torsion}]
  Since $n_P > 1$, we have $v(W_n) = 0$ whenever $n_P \nmid n$.  Furthermore, $s_P = v(W_{n_P}) = \infty$ since $W_{n_P} = 0$ by Proposition \ref{prop:divtorsion}.  
\end{proof}

\begin{proof}[Proof of Corollary \ref{cor:non-sing-nice}]

  In all three parts, we use the notation of Theorem \ref{thm:non-sing} and Lemma \ref{lemma:vals-form}.

  {\bf Condition (1).}  Since $P$ is non-torsion, $v(W_{n_P}) \neq \infty$.  Since $n_P > 1$,  
  \[
v(W_n) = 
\left\{ \begin{array}{ll}
  S_{n/n_P}(p,b_P,v(p),h_P, v(W_{n_P}),w_P) & \mbox{if } n_P \mid n \\
0 & \mbox{if } n_P \nmid n \\
\end{array}
\right. .
\]
Condition (1) implies that $j=0$ (since $h\ge0$) in Definition \ref{defn:sn} (since this is immediate if $b_P=1$, while otherwise $b_P \ge p$ from which it follows).  Condition (1) also implies $w_P=0$ by Lemma \ref{lemma:vals-form}\ref{item:vals-w0}.  By Proposition \ref{prop:snprop}\ref{item:simple}, 
\[
S_k(p,b_P,v(p),h_P,v(W_{n_P}),0) = v(W_{n_P}) + v(p)u(k) = v(W_{n_P}) + v(k).
\]

{\bf Condition (2).}  For $\QQ_p$, $h_P=0$ by definition.  We have $v(p)=1$ and so
\[
(p-1)v(W_{n_P}) \ge p-1 \ge 1 = v(p).
\]
Furthermore, the overall inequality between leftmost and rightmost is strict (and hence we are in condition (1) and we are done) except possibly in the case that $p=2$ and $v(W_{n_P})=1$.  Either way, $j=0$.  Then, according to Lemma \ref{lemma:vals-form} and the proof of Theorem \ref{thm:non-sing}, 
\[
w_P = v([2]z/2z),
\]
where $z = \Theta([n_P]P)$.  If $w_P = 0$, we are done as in condition (1).  For an elliptic curve, the formal group law is
\[
[2]z = 2z - a_1z^2 - 2a_2z^3 + O(z^4)
\]
so that in the case that $a_i \in R$, $p=2$ and $v(z) = 1$, 
\[
v([2]z/2z)>0 \iff v(1-a_1z/2) > 0 \iff v(a_1) = 0.
\]
(Recall that the residue field $\mathbf{k}$ has only one unit since $p=2$.)  As remarked in the proof of \cite[Lemma 5]{MR2747036},
\[
v(a_1) = 0 \iff E\mbox{ has ordinary or multiplicative reduction}.
\]  

{\bf Condition (3).}  Since $p \ge 3$, and $K$ is unramified over $\QQ_p$,
\[
v(p) \le v(W_{n_P}) < (p-1)v(W_{n_P})
\]
and therefore condition (1) is satisfied.
\end{proof}

\section{Singular reduction on a curve of potential good reduction}
\label{sec:pot-good}

In the case that $P$ has singular reduction but $E$ has potential good reduction, we can extend the field and change coordinates to obtain a minimal Weierstrass equation of good reduction for the curve.  Keeping track of the effect this has on the elliptic divisibility sequence allows us to give a formula for the valuations $v(W_n)$.  

\begin{theorem}
  Assume that $E$ has potential good reduction, and $P \in E(K)$ has EDS $W_n$.  There exists an isomorphism $\phi: E \rightarrow E'$ to an elliptic curve in minimal Weierstrass form with good reduction, such that $\phi$ is defined over a finite extension $L$ of $K$, with ramification degree $d \mid 24$.  Let $v_1$ be a valuation of $L$ lying over $v$, such that $v_1(z) = dv(z)$ for $z \in K$, and let $W_n'$ be the EDS associated to $\phi(P)$.  Then,
  \begin{equation}
    \label{eqn:pot-good}
  dv(W_n) = (n^2-1)dv(\Delta_E)/12 + v_1(W_n')
\end{equation}
  where $v_1(W_n')$ is of the form \eqref{eqn:non-sing} of Theorem \ref{thm:non-sing}, since $\phi(P)$ has non-singular reduction.  Furthermore, $12 \mid dv(\Delta_E)$. 
  \label{thm:pot-good}
\end{theorem}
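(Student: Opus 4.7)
The plan is to reduce to the good-reduction case already handled by Theorem \ref{thm:non-sing} by passing to a finite extension of $K$ over which $E$ acquires good reduction, and then tracking how the elliptic divisibility sequence transforms under the resulting isomorphism.

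First, I would invoke the classification of potential good reduction. Since the Kodaira type of $E$ belongs to $\{I_0^*, II, II^*, III, III^*, IV, IV^*\}$, there is a finite extension $L/K$ with ramification degree $d \in \{1,2,3,4,6,12\}$ over which $E$ acquires good reduction, obtained by a suitable quadratic, cubic, quartic, or sextic twist (see, e.g., \cite[IV.10.3]{\Siltwo}); in particular $d \mid 24$. Fix a valuation $v_1$ of $L$ extending $v$ and normalised so that $v_1|_K = d \cdot v$, and choose an isomorphism $\phi: E \to E'$ defined over $L$, with $E'$ in minimal Weierstrass form over $L$ and having good reduction.

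Second, I would use the division-polynomial transformation law to transfer the EDS. Writing $\phi$ in the standard form $x' = u^2 x + r$, $y' = u^3 y + s u^2 x + t$, Proposition \ref{prop:divpoly}\ref{item:homothety} yields
\[
W_n' = \Psi_n(\phi(P), E') = u^{n^2-1} \Psi_n(P, E) = u^{n^2-1} W_n,
\]
so $v_1(W_n') = (n^2-1)\, v_1(u) + d \cdot v(W_n)$. To pin down $v_1(u)$, I would apply the companion transformation law $\Delta_{E'} = u^{12} \Delta_E$ recalled in Section \ref{sec:divpoly-back}: since $E'$ is minimal with good reduction, $v_1(\Delta_{E'}) = 0$, whence $12\, v_1(u) = -v_1(\Delta_E) = -d\, v(\Delta_E)$. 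Substituting this into the previous display and rearranging gives exactly \eqref{eqn:pot-good}. Moreover, since $E'$ has good reduction, every point of $E'(L)$ has non-singular reduction; in particular so does $\phi(P)$, and Theorem \ref{thm:non-sing} applied over the $p$-adic field $L$ with valuation $v_1$ and minimal model $E'$ describes $v_1(W_n')$ in the claimed form \eqref{eqn:non-sing}.

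The main obstacle is the first step. The bound $d \mid 24$ is classical, but a clean justification requires either a case-by-case inspection of the Kodaira types together with the explicit twist (quadratic for $I_0^*$, cubic for $IV$ and $IV^*$, quartic for $III$ and $III^*$, sextic for $II$ and $II^*$) that eliminates each type, or a reference to the action of inertia on the Tate module; a careful citation to \cite{\Siltwo} should suffice. The remaining steps are routine manipulations of the transformation laws from Section \ref{sec:divpoly-back} together with a direct appeal to Theorem \ref{thm:non-sing}.
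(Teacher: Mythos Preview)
Your proposal is correct and follows essentially the same route as the paper: pass to a finite extension over which $E$ acquires good reduction, use the homogeneity of division polynomials (Proposition \ref{prop:divpoly}\ref{item:homothety}, equivalently Theorem \ref{thm:change-to-minimal}) to relate $W_n$ and $W_n'$, and determine the scaling exponent by comparing discriminants. One small slip: your enumeration $d \in \{1,2,3,4,6,12\}$ via twists is only reliable for $p \ge 5$; when $p=2$ the ramification degree can genuinely equal $24$ (see Example \ref{example:sing-pot-good}), which is why the paper argues via Legendre normal form for $p \neq 2$ and Deuring normal form for $p=2$ rather than through the Kodaira-type twist dictionary. Since your argument only uses the conclusion $d \mid 24$, this does not affect the rest of the proof.
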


\begin{proof}[Proof of Theorem \ref{thm:pot-good}]

By the theory of reduction types, over some extension $L/K$, and under an appropriate change of coordinates defined over $L$, we obtain an isomorphic curve and point $E'$ and $P'$ having good reduction.  Let $v_1$ be the valuation for $L$ lying above $v$ such that $v_1(z) = dv(z)$ for $x \in K$, where $d$ is the degree of ramification of $L$ over $K$.  Then $dv(W_n)$ is the valuation of the sequence associated to $E$ and $P$ considered over $L$.  Changing coordinates, using Theorem \ref{thm:change-to-minimal}, we obtain
  \[
  dv(W_n) = (n^2-1)r + v_1(W_n')
  \]
  for some $r \in \ZZ$ such that $0 = v_1(\Delta_{E'}) = dv(\Delta_E)- 12r$.  If $E$ has bad reduction, the extension $L/K$ is ramified, so that $d > 1$ \cite[Proposition VII.5.4(a)]{\Sil}.  Furthermore, we can choose $L$ so that $d$ divides $12$ by changing to Legendre normal form \cite[Proofs of Propositions III.1.7(a), VII.5.4(c)]{\Sil}, unless $p=2$, in which case $d$ divides $24$ by changing to Deuring normal form \cite[Proofs of Propositions A.1.3 and A.1.4(a)]{\Sil}.  Even if $E/K$ was minimal, $E/L$ will not be minimal.  
\end{proof}

\begin{remark}
  Ayad shows that a $P$ of non-trivial reduction on a minimal curve has singular reduction if and only if $v(W_n) > 0$ for all $n \ge 2$ \cite[Theorem A]{\AyadS}.  In the above theorem, if $P$ has singular reduction, we do indeed obtain sequences satisfying $v(W_n) > 0$ for all $n \ge 2$.  See Example \ref{example:sing-pot-good}.
\end{remark}

The following proposition gives some restrictions on the parameters used in Theorem \ref{thm:pot-good}.

\begin{proposition}
  \label{prop:extras-pot-good}
  Under the hypotheses and notations of Theorem \ref{thm:pot-good}, let 
  \[
  d' = 12 / \gcd(v(\Delta_E),12).
  \]
  Then,
\begin{enumerate}[label=(\roman{*}), ref=(\roman{*})]

  \item If $d' =2, 4$, then $n_P \in \{ 1, 2 \}$.
  \item If $d' =3$, then $n_P \in \{ 1, 3 \}$.
  \item If $d' = 6, 12$, then $n_P = 1$.
  \end{enumerate}
\end{proposition}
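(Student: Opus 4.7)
The plan is to exploit the integrality of $v(W_n)$, combined with the explicit formula of Theorem \ref{thm:pot-good}, to obtain divisibility constraints of the shape $d' \mid n^2-1$, and then finish by a small case analysis on $n_P$.

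First, set $e = dv(\Delta_E)/12$. The proof of Theorem \ref{thm:pot-good} already records that $e \in \ZZ$, since this $e$ is the integer $r$ supplied by Theorem \ref{thm:change-to-minimal} in the passage from $E/L$ to the minimal good-reduction model $E'/L$. Because $v(W_n)\in\ZZ$, the identity
\[
dv(W_n) = (n^2-1)e + v_1(W_n')
\]
forces $d \mid (n^2-1)e + v_1(W_n')$ for every $n$. Assume now that $n_P > 1$. Applying Theorem \ref{thm:non-sing} to $\phi(P)$ on $E'/L$, we have $v_1(W_n') = 0$ for every $n$ with $n_P \nmid n$: indeed, the final clause of Theorem \ref{thm:non-sing} gives $v_1(x(\phi(P))) \ge 0$ whenever $n_P > 1$, so the $n^2$-coefficient in \eqref{eqn:non-sing} vanishes and only the bracketed case-split remains. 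Consequently $d \mid (n^2-1)e$ for every such $n$. Writing $g = \gcd(d,e)$, so that $d' = d/g$ is coprime to $e/g$, this is equivalent to
\[
d' \mid n^2 - 1 \quad \text{for every } n \text{ with } n_P \nmid n.
\]

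The proposition now reduces to a purely arithmetic case analysis, using that $d' \mid 24$. If $n_P = 2$, taking $n = 3$ yields $d' \mid 8$, so $d' \in \{1,2,4,8\}$. If $n_P = 3$, taking $n = 2$ yields $d' \mid 3$, so $d' \in \{1,3\}$. If $n_P \ge 4$, then both $2$ and $3$ are coprime to $n_P$, giving simultaneously $d' \mid 3$ and $d' \mid 8$, whence $d' = 1$. The case $n_P = 1$ is vacuous and imposes no constraint on $d'$. Taking contrapositives of these three conclusions gives precisely statements (i), (ii), and (iii).

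I do not foresee any serious obstacle, as the analytic content has already been packaged in Theorems \ref{thm:pot-good} and \ref{thm:non-sing}, and what remains is elementary divisibility bookkeeping. The one point to verify with care is the reduction behaviour of $\phi(P)$ on $E'/L$ when $n_P > 1$, which ensures that the $n^2$-term in \eqref{eqn:non-sing} contributes nothing and that $v_1(W_n') = 0$ holds exactly when $n_P \nmid n$; this is immediate from the equivalence $v(x(P)) < 0 \iff n_P = 1$ asserted at the end of Theorem \ref{thm:non-sing}.
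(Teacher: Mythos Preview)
Your proof is correct and follows the same conceptual route as the paper: both arguments derive from the integrality of $v(W_n)$ the divisibility constraint $d' \mid n^2-1$ for all $n$ with $n_P \nmid n$ (assuming $n_P>1$). The paper packages the final step into a separate Lemma~\ref{lemma:n-squared} classifying all pairs $(a,b)$ with $n\not\equiv 0\pmod a \Rightarrow n^2\equiv 1\pmod b$, whereas you simply plug in $n=2$ and $n=3$ and read off the constraints directly; your version is shorter and avoids the auxiliary lemma entirely. The paper also inserts an intermediate reduction showing $g\mid s_P,h_P,w_P$ before passing to $d'$, which you correctly recognise as unnecessary for this proposition since only the $n_P\nmid n$ branch (where $v_1(W_n')=0$) is used.

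One small slip: when $n_P\ge 4$ you write ``both $2$ and $3$ are coprime to $n_P$'', which is false for $n_P=4,6,\dots$; what you need (and what your argument actually uses) is that $n_P\nmid 2$ and $n_P\nmid 3$, which holds simply because $n_P\ge 4$.
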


For Proposition \ref{prop:extras-pot-good}, we require an elementary number theoretical lemma.

\begin{lemma}
\label{lemma:n-squared}
Let $a, b \in \ZZ^{>0}$.  Suppose that for all integers $n$,
\begin{equation}
\label{eqn:n-squared}
n \not\equiv 0 \pmod a \implies n^2 \equiv 1 \pmod b.
\end{equation}
Then
\[
(a,b) \in \{ (1,*), (*,1), (2,2), (3,3), (2,4), (2,8) \}
\]
where $*$ represents any positive integer.
\end{lemma}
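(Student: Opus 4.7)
The plan is straightforward case analysis, using small values of $n$ to pin down $b$ and then using that to pin down $a$.

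First I dispose of the trivial boundary cases. If $a = 1$, the hypothesis of the implication is never satisfied (every $n$ is $\equiv 0 \pmod 1$), so the condition holds vacuously and $(1,\ast)$ appears. If $b = 1$, the conclusion of the implication is automatic and $(\ast,1)$ appears. So from here on I assume $a, b \ge 2$.

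The key observation is that the hypothesis applied to small integers $n$ strictly less than $a$ forces $b \mid n^2 - 1$. I plug in $n = 2$: if $a \ge 3$, then $a \nmid 2$, so $b \mid 3$, giving $b = 3$. Now I use the symmetric observation. With $b = 3$, the condition $n^2 \equiv 1 \pmod 3$ means precisely that $3 \nmid n$. So the hypothesis rephrases as $a \nmid n \implies 3 \nmid n$, equivalently $3 \mid n \implies a \mid n$. Specializing to $n = 3$ forces $a \mid 3$, so $a = 3$. This yields the pair $(3,3)$.

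It remains to handle $a = 2$. Here the hypothesis reads: every odd $n$ satisfies $b \mid n^2 - 1$. Taking $n = 3, 5, 7$ gives $b \mid \gcd(8, 24, 48) = 8$, so $b \in \{1, 2, 4, 8\}$. Conversely, a standard elementary check shows that for every odd $n$ one has $n^2 \equiv 1 \pmod 8$ (write $n = 2k+1$, so $n^2 = 4k(k+1) + 1$ and $k(k+1)$ is even), hence $n^2 \equiv 1 \pmod{b}$ for any $b \mid 8$. This gives the pairs $(2,2), (2,4), (2,8)$.

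There is no real obstacle here; the only thing to be careful about is verifying that the listed pairs actually do satisfy \eqref{eqn:n-squared}, which I do implicitly in the last paragraph for $a = 2$, and which is immediate for $(3,3)$ and the trivial cases. Combining the three sub-cases exhausts all pairs $(a,b)$ with $a,b \ge 2$, completing the proof.
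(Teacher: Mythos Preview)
Your proof is correct. Both you and the paper argue by elementary case analysis on small values of $n$, but the organization differs. The paper splits into $a=2$, $a=3$, and $a>3$; for $a>3$ it first proves the structural claim that $a \mid b$ whenever $a,b>1$ (via a prime argument), and then observes that $n \not\equiv 0 \pmod a \implies n^2 \equiv 1 \pmod a$ fails for $a>3$ because there is more than one nonzero square class. You instead handle all $a \ge 3$ at once by plugging in $n=2$ to force $b=3$, and then read the implication backwards (using that $n^2 \equiv 1 \pmod 3$ is equivalent to $3 \nmid n$) to force $a \mid 3$. Your route is shorter and avoids the auxiliary divisibility claim; the paper's route isolates a reusable fact ($a \mid b$) but is a bit longer. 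The $a=2$ case is handled the same way in both.
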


\begin{proof}
        The statement \eqref{eqn:n-squared} holds vacuously if $a=1$ and trivially if $b=1$.   If $a=2$, choosing $n=3$ implies $b \in \{1, 2,4,8\}$.  If $a=3$, choosing $n=2$ implies $b \in  \{ 1, 3\}$.  If $a > 3$, choosing $n=2$ and $n=3$ implies that $b=1$. 
\end{proof}

\begin{proof}[{Proof of Proposition \ref{prop:extras-pot-good}}]
        We may assume that $n_P > 1$.  By Theorem \ref{thm:pot-good} and \ref{thm:non-sing}, there are parameters $d, b, h, s, w$ such that
   \begin{equation}
           \label{eqn:pot-good-again}
  dv(W_n) = (n^2-1)dv(\Delta_E)/12 + \left\{ \begin{array}{ll}
                        S_{n/n_P}(p, b, dv(p), h, s, w) & \mbox{ if $n_P \mid n$} \\
                                                              0& \mbox{ if $n_P \nmid n$}
                \end{array} \right. .
\end{equation}
Write $r = dv(\Delta_E)/12 \in \ZZ$, and $g = \gcd(d,r)$.  It is an exercise in elementary number theory to see that $d/g = \gcd(v(\Delta_E),12)$.  The exercise is as follows:  if $y \mid dv$, then $d / \gcd(d, dv/y) = y/\gcd(v,y)$.

The first claim is that without loss of generality, we may assume \eqref{eqn:pot-good-again} holds for some (possibly different) $d,b,h,s,w$ having $g=1$.  Since $v(W_n) \in \ZZ$, we find that for $n$ divisible by $n_P$,  
\[
S_{n/n_P}(p,b,dv(p),h,s, w) 
\equiv 0 \pmod{g}.
\]
If $n=n_P$, we find that $g \mid s$, and from this we deduce that $g \mid h$ (if $j=0$ we may simply change $h$ without changing the function $S_{n/n_P}$; otherwise take $n$ satisfying $v(n/n_P) = 1$).  Then $g \mid w$ by taking $n$ having $v(n/n_P)$ large enough.  Therefore, by Proposition \ref{prop:snprop}\ref{item:snhomog},
\[
S_{n/n_P}(p,b,dv(p),h,s,w) = g S_{n/n_P}(p,b,dv(p)/g,h/g,s/g,w/g).
\]
Therefore we can divide \eqref{eqn:pot-good-again} by $g$, i.e. replace $d$ by $d/g = \gcd( 12, v(\Delta_E))$, and parameters $h,s,w$ replaced by their own quotients with $g$.  This is the proof of the claim.  Therefore, without loss of generality let us assume $\gcd(d,r)=1$ and $d = \gcd(12, v(\Delta_E))$.

Since $v(W_n)$ is an integer and $r=dv(\Delta_E)/12$ is an integer, 
\[
n \not\equiv 0 \pmod{n_P} \implies (n^2 - 1)r \equiv 0 \pmod{d}.
\]
Since $r$ is coprime to $d$, Lemma \ref{lemma:n-squared} implies that the implication only holds if
\[
(n_P, d) \in \{(1,*),(*,1),(2,2),(2,4),(2,8),(3,3)\}.
\]
\end{proof}

\section{Elliptic troublemaker sequences}
\label{sec:ets}

In this section we define a class of integer sequences which will be needed to describe $v(W_n)$ for points $P$ of singular reduction on an elliptic curve $E$ with multiplicative or potential multiplicative reduction.

\begin{definition}
\label{def:torsionpower}
To any pair $(a, \ell)$ of integers satisfying $\ell \neq 0$, we associate an integer sequence called the \emph{elliptic troublemaker sequence}, defined for $n \ge 0$ by
\begin{equation}
  \label{eqn:rndef}
R_n(a,\ell) 
=
\left\lfloor
\frac{n^2 \widehat{a} (\ell -\widehat{a})}{2\ell}
\right\rfloor 
- \left\lfloor \frac{\widehat{na}(\ell-\widehat{na})}{2\ell} \right\rfloor ,
\end{equation}
where $\widehat{x}$ denotes the least non-negative residue of $x$ modulo $\ell$.

\end{definition}

Some examples are given in Table \ref{rn}.  We devote the rest of this section to properties of these sequences.

\begin{table}
\caption{Elliptic troublemaker sequences $R_n(a,\ell)$ for various $(a,\ell)$.}
\label{rn}
\begin{tabular}{r|rrrrrrrrrrrrr}
$n$:  & 1 & 2 & 3 & 4 & 5 & 6 & 7 & 8 & 9 & 10 & 11 & 12 & 13\\
\hline
$R_n(1,2)$: & 0 & 1 & 2 & 4 & 6 & 9 & 12 & 16 &
20 & 25 & 30 & 36 & 42 \\
$R_n(1,3)$: & 0 & 1 & 3 & 5 & 8 & 12 & 16 & 21
& 27 & 33 & 40 & 48 & 56 \\
$R_n(2,3)$: & 0 & 1 & 3 & 5 & 8 & 12 & 16 & 21
& 27 & 33 & 40 & 48 & 56 \\
$R_n(1,4)$: & 0 & 1 & 3 & 6 & 9 & 13 & 18 & 24
& 30 & 37 & 45 & 54 & 63 \\
$R_n(2,4)$: & 0 & 2 & 4 & 8 & 12 & 18 & 24 & 32
& 40 & 50 & 60 & 72 & 84 \\
$R_n(1,5)$: & 0 & 1 & 3 & 6 & 10 & 14 & 19 & 25
& 32 & 40 & 48 & 57 & 67 \\
$R_n(2,5)$: & 0 & 2 & 5 & 9 & 15 & 21 & 29 & 38
& 48 & 60 & 72 & 86 & 101 \\
$R_n(1,6)$: & 0 & 1 & 3 & 6 & 10 & 15 & 20 & 26
& 33 & 41 & 50 & 60 & 70 \\
$R_n(2,6)$: & 0 & 2 & 6 & 10 & 16 & 24 & 32 & 42
& 54 & 66 & 80 & 96 & 112 \\
$R_n(3,6)$: & 0 & 3 & 6 & 12 & 18 & 27 & 36 & 48
& 60 & 75 & 90 & 108 & 126 \\
$R_n(1,7)$: & 0 & 1 & 3 & 6 & 10 & 15 & 21 & 27
& 34 & 42 & 51 & 61 & 72 \\
$R_n(2,7)$: & 0 & 2 & 6 & 11 & 17 & 25 & 35 & 45
& 57 & 71 & 86 & 102 & 120 \\
$R_n(3,7)$: & 0 & 3 & 7 & 13 & 21 & 30 & 42 & 54
& 69 & 85 & 103 & 123 & 144 \\
$R_n(1,11)$: & 0 & 1 & 3 & 6 & 10 & 15 & 21 & 28
& 36 & 45 & 55 & 65 & 76 \\
\end{tabular}
\end{table}

\begin{proposition}
\label{prop:rn}
The function $R_n(a,\ell)$ has the following properties.
\begin{enumerate}[label=(\roman{*}), ref=(\roman{*})]
  \item \label{item:rn0} $R_n(0,\ell) = 0$.
\item \label{item:r01} $R_0(a,\ell) = R_1(a,\ell) = 0$. 
  \item \label{item:aell} $R_n(a,\ell) = R_n(\ell \pm a,\ell)$.
  \item \label{item:ka} For any positive integer $k$, $R_n(ka, k\ell) = kR_n(a,\ell)$.
\item \label{item:quad} For positive integers $n$ and $m$,
$R_n(ma,\ell) = R_{nm}(a,\ell) - n^2R_m(a,\ell)$.
\item \label{item:quadbound} $R_{n+1}(a,\ell)+R_{n-1}(a,\ell)-2R_n(a,\ell) < \ell$.
\item \label{item:triang} For $0 < n < \ell/a$, 
  \[
  R_n(a,\ell) = \frac{n^2-n}{2}a.
  \]
  \item \label{item:floor} 
If $\ell \mid na$ or if $0 \le a < \ell \le 7$, then 
\begin{equation}
  \label{eqn:floorexact}
  R_n(a,\ell)
= \left\lfloor
\frac{n^2 a (\ell -a)}{2\ell}
\right\rfloor .
\end{equation}

\item \label{item:mainalt}  An alternative formula for $R_n(a,\ell)$ is
\begin{multline}
\label{eqn:mainalt}
R_n(a,\ell) = \frac{\ell}{2} \left(
\left( \frac{na}{\ell} - \nal \right)^2
- \left( \frac{na}{\ell} - \nal \right) \right.\\
\left. - n^2 \left( \frac{a}{\ell} - \al \right)^2
+ n^2 \left( \frac{a}{\ell} - \al \right) \right) .
\end{multline}

  \item \label{item:shortmain} If $0 \le a < \ell$, then an alternate formula for $R_n(a,\ell)$ is
\begin{equation}
  \label{eqn:shortmain}
R_n(a,\ell) = \frac{\ell}{2} \left(
\left( \frac{na}{\ell} - \nal \right)^2
- \left( \frac{na}{\ell} - \nal \right) \right.
\left. + \frac{n^2a(\ell-a)}{\ell^2} \right) .
\end{equation}

\item \label{item:bern} 
  Let $B_2(t) = t^2 - t + \frac{1}{6}$, called the \emph{second Bernoulli polynomial}, and let $ \widetilde{B}_2(t)=B_2(t- \lfloor t \rfloor)$, called the \emph{periodic second Bernoulli polynomial}.  Then an alternate formula for $R_n(a,\ell)$ is
\begin{equation*}
\label{eqn:perber}
R_n(a,\ell) = \frac{\ell}{2} \left( \widetilde{B}_2\left( \frac{na}{\ell} \right) - n^2 \widetilde{B}_2 \left( \frac{a}{\ell} \right) + \frac{n^2 - 1}{6} \right).
\end{equation*}

\item \label{item:sumform} An alternate formula for $R_n(a,\ell)$ is
\begin{equation}
\label{eqn:rsums}
R_n(a,\ell)
= \frac{n^2-n}{2} a
+ \left( \sum_{k=1}^{\nal} k\ell  - na \right)
- n^2\left( \sum_{k=1}^{\al} k\ell  - a \right).
\end{equation}
\item \label{item:growth} We have
  \[
  \left| R_n(a,\ell) -  \left( \frac{ \widehat{a} ( \ell - \widehat{a} )}{2\ell} \right) n^2 \right| \le \frac{\ell}{8},
  \]
  where $\widehat{x}$ denotes the least non-negative residue of $x$ modulo $\ell$.
\end{enumerate}
\end{proposition}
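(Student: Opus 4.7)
The inequality will follow as a direct corollary of the alternative formula given in part (x) of the proposition, after reducing $a$ modulo $\ell$. The strategy has three steps.

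First I would reduce to the case $0 \le a < \ell$. Inspecting Definition~\ref{def:torsionpower}, both $\widehat{a}$ and $\widehat{na} = \widehat{n\widehat{a}}$ are unchanged when $a$ is replaced by its residue $\widehat{a}$, so $R_n(a,\ell) = R_n(\widehat{a},\ell)$. Therefore we may assume $a = \widehat{a} \in [0,\ell-1]$, in which case $\widehat{a}(\ell - \widehat{a}) = a(\ell-a)$.

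Second, I would apply (x). Writing $u := na/\ell - \lfloor na/\ell \rfloor$ for the fractional part of $na/\ell$, formula (x) reads
\[
R_n(a,\ell) = \frac{\ell}{2}\left(u^2 - u + \frac{n^2 a(\ell-a)}{\ell^2}\right),
\]
which rearranges to the clean identity
\[
R_n(a,\ell) - \frac{n^2\,\widehat{a}(\ell - \widehat{a})}{2\ell} = \frac{\ell}{2}\bigl(u^2 - u\bigr).
\]

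Finally, since $u \in [0,1)$, the quadratic $u^2 - u = (u - \tfrac{1}{2})^2 - \tfrac{1}{4}$ lies in $[-\tfrac{1}{4}, 0]$, so the right-hand side lies in $[-\ell/8,\,0]$; this is strictly stronger than the desired symmetric bound $\ell/8$. The only real obstacle in the argument is nothing more than invoking formula (x) — once (x) is available, the claimed bound reduces to a one-line quadratic maximization, and indeed the proof incidentally shows the sharper one-sided estimate $-\ell/8 \le R_n(a,\ell) - n^2\widehat{a}(\ell-\widehat{a})/(2\ell) \le 0$.
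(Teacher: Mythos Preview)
Your proof is correct and essentially the same as the paper's: the paper invokes parts~\ref{item:bern} and~\ref{item:aell} together with the observation that $X(1-X)$ has maximum $1/4$ on $[0,1]$, which amounts to exactly your computation (formula~\ref{item:shortmain} is just~\ref{item:bern} specialized to $0\le a<\ell$, and your reduction $R_n(a,\ell)=R_n(\widehat a,\ell)$ is the content of~\ref{item:aell}). Your observation that the bound is in fact one-sided, $-\ell/8 \le R_n(a,\ell) - n^2\widehat a(\ell-\widehat a)/(2\ell) \le 0$, is a nice sharpening not stated in the paper.
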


\begin{proof}

  We prove the various parts out of order according to the various interdependencies.

  {\bf Parts \ref{item:rn0}, \ref{item:r01}, \ref{item:aell} and \ref{item:floor}}  Direct calculations from the definition.
  
  {\bf Part \ref{item:quadbound}}  For $0 \le x \le 1$, $0 \le x(1-x)/2 \le 1/8$, so that as $b$ ranges through the least non-negative residues modulo $\ell$,
  \begin{equation}
    \label{eqn:floorbd}
  0 \le \frac{ b(\ell-b) }{2\ell} \le \ell/8.
\end{equation}
  For any $A$ and $B$, 
  \begin{equation}
    \label{eqn:floorprops}
  \lfloor A \rfloor + \lfloor B \rfloor \le \lfloor A+B \rfloor, \quad \lfloor A \rfloor + \lfloor -A \rfloor = -1.
\end{equation}
From the definition, \eqref{eqn:floorbd} and \eqref{eqn:floorprops},
\begin{equation*}
  R_{n+1}(a,\ell) + R_{n-1}(a,\ell) - 2R_n(a,\ell)
  \le \left\lfloor \frac{a (\ell-a)}{\ell} \right\rfloor - 2 + 2 \left( \frac{\ell}{8}\right) 
  < \ell.
\end{equation*}

{\bf Part \ref{item:shortmain} }  We will show that \eqref{eqn:shortmain} is equal to \eqref{eqn:rndef}, under the assumption that $0 \le a < \ell$.  

We consider the case $\ell \mid na$ separately.  In this case, using \eqref{eqn:shortmain}, to show \eqref{eqn:rndef} (or actually \eqref{eqn:floorexact}), we need only check that $\frac{n^2a(\ell -a)}{\ell}$, which is an integer divisible by $\ell$, is even.  Both cases, $2 \mid \ell$ and $2 \nmid \ell$, are immediate.  Therefore we assume that $ \ell \nmid na$.  
  
  We express certain quantities in terms of their integer and fractional parts:  write
  \[
  \frac{na}{\ell} = X + x, \quad \frac{n(\ell-a)}{\ell} = Y + y, \quad \frac{n^2a(\ell-a)}{2\ell} = Z + z
  \]
  where $X, Y, Z$ are integers and $0 < x, y < 1$ and $0 \le z < 1$.  We also know that $x+y = 1$.  Furthermore, $x$ and $y$ are rationals with denominator dividing $\ell$.  We have
  \[
  Z+z = \frac{\ell}{2}(X+x)(Y+y).
  \]
  Write
  \[
  \frac{\ell}{2}(X+x)(Y+y) = \frac{1}{2}(\ell XY + \ell xY + X\ell y) + \frac{\ell}{2}xy.
  \]
  We wish to show that the first of the two terms on the right is an integer.  That is, we want to show the integer $\ell XY + (\ell x)Y + X(\ell y)$ is even (note that $x$ and $y$ are not integers, but $\ell x$ and $\ell y$ are).  We do this by cases.  If $X \equiv Y \equiv 0 \pmod 2$, then the integer is even.  If $X \equiv Y \equiv 1 \pmod 2$, then
  \[
   \ell XY + (\ell x) Y + X (\ell y) \equiv \ell + \ell x + \ell y \equiv 2 \ell \equiv 0.
   \]
   If $X \not\equiv Y$, by symmetry we may assume that $X \equiv 0$ and $Y \equiv 1$.  Since $X + Y = n-1$, we discover that $n \equiv 0$.  Then, since $na = X\ell + x\ell$,
   \[
   \ell XY + (\ell x) Y + X (\ell y) 
   \equiv naY \equiv 0.
   \]
   Thus we have discovered that $\frac{1}{2}(\ell XY + \ell xY + X\ell y)$ is an integer.  
   
   Hence 
   \[
   Z = \frac{1}{2}(\ell XY + \ell xY +X \ell y) + \left\lfloor \frac{\ell}{2}xy \right\rfloor, \quad z = \frac{\ell}{2}xy - \left\lfloor \frac{\ell}{2}xy \right\rfloor.
   \]
   Write $x = s/\ell$ for some $0 < s < \ell$ (in other words, $s = \widehat{na}$).  Noting that $xy = x - x^2$, and substituting for the meaning of $x$, $y$ and $z$ in the second equation, we obtain
   \begin{multline*}
\left(
   \left\lfloor \frac{n^2 a (\ell -a)}{2\ell} \right\rfloor
   - \frac{n^2a(\ell-a)}{2\ell} \right)
\\
-
   \frac{\ell}{2} \left(
   \left( \frac{na}{\ell} - \left\lfloor \frac{na}{\ell} \right\rfloor \right)^2 - 
   \left( \frac{na}{\ell} - \left\lfloor \frac{na}{\ell} \right\rfloor \right)
   \right)
 = 
\left\lfloor
\frac{ s(\ell-s)}{2\ell} 
\right\rfloor ,
\end{multline*}
  which is what was required to prove.

  {\bf Part \ref{item:mainalt}}  If $0 \le a < \ell$, it is immediate that \eqref{eqn:mainalt} reduces to \eqref{eqn:shortmain}.  Therefore, using parts \ref{item:aell} and \ref{item:shortmain}, it suffices to check that \eqref{eqn:mainalt} is independent of the choice of residue of $a$ modulo $\ell$.  But this is a direct calculation (compare the formula \eqref{eqn:mainalt} for $R_n(a,\ell)$ and $R_n(a+\ell,\ell)$).

  {\bf Parts \ref{item:ka} and \ref{item:bern} } Direct calculations from \eqref{eqn:mainalt} of part \ref{item:mainalt}.

{\bf Part \ref{item:quad}}  Letting
\[
S(n) = \frac{\ell}{2}\left(
\left( \frac{na}{\ell} - \nal \right)^2
- \left( \frac{na}{\ell} - \nal \right) \right)
\]
we obtain, from the formula \eqref{eqn:mainalt} of part \ref{item:mainalt},
\begin{align*}
R_n(ma,\ell) &= S(mn) - n^2S(m), \\
 R_{mn}(a,\ell) &= S(mn) - m^2n^2S(1), \\
 n^2R_m(a,\ell) &= n^2S(m) - n^2m^2S(1).
\end{align*}

{\bf Part \ref{item:sumform}} Let
\[
T(n) = \sum_{k=1}^{\nal} k \ell - na.
\]
A calculation reveals that 
\[
T(n)
= \left(\left\lfloor \frac{na}{\ell} \right\rfloor^2 + \left\lfloor \frac{na}{\ell} \right\rfloor - \frac{2na}{\ell} \left\lfloor \frac{na}{\ell} \right\rfloor \right) \frac{\ell}{2} .
\]
Then, expanding formula \eqref{eqn:mainalt} of part \ref{item:mainalt}, we obtain
\[
 R_n(a,\ell) = T(n) - n^2T(1) + \frac{\ell}{2}\left(  - \frac{na}{\ell} + \frac{n^2a}{\ell} \right).
\]

  {\bf Part \ref{item:triang}}  Follows immediately from part \ref{item:sumform}.

{\bf Part \ref{item:growth}}  Immediate from parts \ref{item:bern} and \ref{item:aell}, together with the observation that $X(1-X)$ has maximum $1/4$ on the interval $[0,1]$.
\end{proof}

\begin{remark}
  By Proposition \ref{prop:rn}, parts \ref{item:aell} and \ref{item:ka}, it suffices to study sequences satisfying $0 \le 2a \le \ell$ with $\gcd(a,\ell)=1$.  We could index the collection of such sequences by $\QQ \cap [0,\frac{1}{2}]$.
\end{remark}

\section{Multiplicative reduction}
\label{sec:mult}

We now turn to $P$ having singular reduction on a curve $E$ of multiplicative bad reduction, where we will require the theory of the Tate curve.  

Suppose that $E$ does not have potential good reduction, i.e. $v(j_E) < 0$.  In this case, there is a unique $q \in K^*$ with $v(q) > 0$ such that the Tate curve $E_q$ is isomorphic to $E$ over a finite extension $L$ of $K$.  The case of multiplicative reduction is the case that $L$ can be taken to be unramified, and split multiplicative reduction corresponds to $L=K$.  See \cite[Chapter V]{\Siltwo} for background.

\begin{definition}
  \label{defn:aplp}
  For any elliptic curve $E/K$ with non-integral $j$-invariant, and any $P \in E(K)$, let $\phi: E \rightarrow E_q$ be an isomorphism to the Tate curve (note that $v(q) > 0$).  Analytically, $E_q$ is isomorphic to $K^*/q^\ZZ$, under which $\phi(P)$ corresponds to some $u \in K^*$.  As a convention, choose $u$ satisfying $0 \le v(u) < v(q)$.  Define 
\[
\ell_P = v(q), \quad a_P = v(u).
\]
Note that, despite the notation, the quantity $\ell_P$ only depends on the elliptic curve $E$.  
It is a standard fact that $\ell_P = v(\Delta(E_q)) = -v(j(E_q)) = - v(j(E))$.  

\end{definition}

\begin{remark}
  \label{remark:mult-component}
  Using the standard isomorphisms (see \cite[Remark IV.9.6]{\Siltwo}),
  \[
  E(K)/E_0(K) \rightarrow K^*/q^\ZZ R^* \rightarrow \ZZ/\ell_P\ZZ,
  \]
$a_P = v(u)$ is the component of the N\'eron model special fibre ($\cong \ZZ/\ell_P\ZZ$) containing $P$.  In particular, $a_P = 0$ if and only if $P$ has non-singular reduction.
\end{remark}

\begin{theorem}
\label{thm:mult}
Suppose that $P$ has singular reduction, and $E$ is in minimal Weierstrass form with multiplicative reduction.  Let $a_P$ and $\ell_P$ be as in Definition \ref{defn:aplp}.  Let $n_P$ be the smallest positive integer such that $\widetilde{[n_P]P} = \widetilde{\mathcal{O}}$ over the residue field $\mathbf{k}$.  Then
there exist 
\[
 s_P \in \ZZ^{>0} \cup \{ \infty \}, \quad w_P \in \ZZ^{\ge 0} \cup \{ \infty \}
\]
such that for all positive integers $n$,
\begin{equation}
v(W_n) = 
R_n(a_P,\ell_P)  
+ \left\{ \begin{array}{ll}
  S_{n/n_P}(p,p, v(p), 0, s_P,w_P) & n_P \mid n \\
0 & n_P \nmid n \\
\end{array} \right. 
.
\label{eqn:mult}
\end{equation}
Furthermore,
\begin{enumerate}[label=(\roman{*}), ref=(\roman{*})]
\item \label{item:mult-ns}Letting $g = \gcd(a_P,\ell_P)$, the integers $n_P$ and $s_P$ are given by 
\[
n_P = \frac{\ell_P\ord(q^{a_P/g}u^{-\ell_P/g})}{g}, \quad
s_P = v(1-q^{n_Pa_P/\ell_P}u^{-n_P}). 
\]
where $\ord$ denotes the multiplicative order of the image in the residue field $\mathbf{k}$.
\item \label{item:mult-nplp}
  If $P$ is a torsion point of order $N$, then 
  \[
  n_P = \frac{\ell_P}{\gcd(a_P,\ell_P)} = N.
  \]
\end{enumerate}
\end{theorem}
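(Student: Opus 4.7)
The natural setting is the Tate uniformization: I will let $u \in K^*$ represent $P$ under the isomorphism $E(K) \cong E_q(K) \cong K^*/q^{\ZZ}$, normalized so $0 \le v(u) = a_P < v(q) = \ell_P$ (for non-split multiplicative reduction one first passes to the unramified quadratic twist, which does not change $v$). The strategy is to read $v(W_n)$ directly off this analytic picture: the $R_n(a_P, \ell_P)$ term will arise from the quasi-periodic, ``degree--$2$'' behavior of the Tate theta function, while the $S$-sequence term will come from applying Lemma \ref{lemma:vals-form} to the formal multiplicative group, which is the formal group of the identity component of the N\'eron model at a multiplicative prime.

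The first computational step is to express $\Psi_n$ via the Tate theta function
\[
\Theta(u, q) \defeq (1 - u) \prod_{m \ge 1} (1 - q^m u)(1 - q^m/u),
\]
through a formula $\Psi_n(u) = c_n(q, u) \cdot \Theta(u^n, q)/\Theta(u, q)^{n^2}$ analogous to \eqref{eqn:cpsi}, with an explicit monomial prefactor $c_n(q, u)$ in $q$ and $u$; equivalently, one may use Silverman's Exercise VI.6.4(e) together with the known closed form for the N\'eron local height on $E_q$. Writing $u^n = q^{k_n} u_n$ with $k_n = \lfloor n a_P/\ell_P \rfloor$ and $v(u_n) = \widehat{na_P}$, the quasi-periodicity $\Theta(qu, q) = -u^{-1}\Theta(u, q)$ reduces $v(\Theta(u^n, q))$ to $v(\Theta(u_n, q))$ plus an explicit integer combination of $k_n$, $\ell_P$, and $\widehat{na_P}$. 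The ultrametric inequality immediately gives $v(1 - q^m u_n) = v(1 - q^m/u_n) = 0$ for $m \ge 1$ whenever $0 \le v(u_n) < \ell_P$, so $v(\Theta(u_n, q)) = v(1 - u_n)$.

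Aggregating everything, including the $-n(n^2 - 1)v(\Delta_E)/12$ correction enforcing minimality, the ``geometric'' pieces consolidate into exactly $R_n(a_P, \ell_P)$ by the sum formula of Proposition \ref{prop:rn}\ref{item:sumform}; the normalization $v(W_1) = 0$ then eliminates any residual additive constant. What survives is $v(1 - u_n) - n^2 v(1 - u)$, which vanishes unless $[n]P$ reduces to the identity of the N\'eron model, equivalently $\ell_P \mid na_P$ (so $u_n$ is a unit) and $u_n \equiv 1 \pmod{\mathcal{M}}$. The first condition forces $\ell_P/g \mid n$ where $g = \gcd(a_P, \ell_P)$, and the smallest $n$ meeting both conditions is $n_P = (\ell_P/g)\cdot \ord(q^{a_P/g} u^{-\ell_P/g})$, yielding claim \ref{item:mult-ns}. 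On the subsequence indexed by multiples of $n_P$, the $u_{kn_P}$ lie in $1 + \mathcal{M}$, and Lemma \ref{lemma:vals-form} applied to the formal multiplicative group (for which $b = p$, $h = 0$) produces the $S$-sequence with initial valuation $s_P = v(1 - u_{n_P}) = v(1 - q^{n_P a_P/\ell_P} u^{-n_P})$.

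For the torsion refinement \ref{item:mult-nplp}: if $P$ has exact order $N$ in $E(K)$, then $u$ has exact order $N$ in $K^*/q^{\ZZ}$, so $u^N = q^M$ exactly for an integer $M$ with appropriate minimality. This forces $u^{\ell_P/g} = q^{a_P/g}$, so the $\ord$ factor in the formula for $n_P$ equals $1$, yielding $n_P = \ell_P/g$; and the equality $\ell_P/g = N$ is then immediate from $u$ having order exactly $N$ modulo $q^{\ZZ}$. The main obstacle I anticipate is pinning down the monomial prefactor $c_n(q, u)$ and tracking the signs in the quasi-periodicity manipulations carefully enough that the geometric sum assembles precisely into $R_n(a_P, \ell_P)$, with no residual term linear in $n$; this is essentially a careful bookkeeping exercise, once the correct closed form for $c_n$ is pulled from the Tate-curve literature.
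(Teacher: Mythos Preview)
Your proposal is correct and follows essentially the same route as the paper: Tate uniformization, a theta-function formula for $\Psi_n$, identification of the geometric part with $R_n(a_P,\ell_P)$ via the sum formula of Proposition~\ref{prop:rn}\ref{item:sumform}, and Lemma~\ref{lemma:vals-form} applied to the formal multiplicative group for the $S$-term; the non-split case is handled by an unramified extension and the torsion case via $u^N \in q^{\ZZ}$. Two small remarks: the monomial prefactor you are looking for is simply $c_n = u^{(n^2-n)/2}$ (the paper derives this over $\CC$ from \eqref{eqn:cpsi} and the product formula for $\sigma$, then transports it to $K$), and since the Tate model \eqref{eqn:tatecurve} is already minimal there is no $v(\Delta_E)$ correction---the paper computes $v(\theta(u^n,q))$ directly factor-by-factor rather than via quasi-periodicity, but this is only a bookkeeping difference.
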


\begin{proof}
We drop the subscripts and write $\ell = \ell_P$ and $a = a_P$.  
First, we consider the Tate curve $E_q$, by which we mean,  
as described in \cite[Theorem V.3.1]{\Siltwo}, the curve given by the model
\begin{equation}
  E_q: y^2 + xy = x^3 + a_4(q)x + a_6(q), 
  \label{eqn:tatecurve}
\end{equation}
in which case, the point $u$ corresponds to $(X(u,q),Y(u,q))$ where $X$ and $Y$ are defined in \cite[Theorem V.3.1]{\Siltwo}.  We can define $\Psi_n(u,q)$ as the usual polynomial in $X$ and $Y$ for \eqref{eqn:tatecurve}.  
As in \cite[Proposition V.3.2]{\Siltwo}, define the normalised theta function as
\begin{equation*}
\label{eqn:theta}
\theta(u,q) 
= (1-u)\prod_{k \geq 1} \frac{(1-q^ku)(1-q^ku^{-1})}{(1-q^k)^2}.
\end{equation*}
We wish to express $\Psi_n(u,q)$ in terms of the normalised theta function.  Over $\CC$, we have
\begin{equation*}
  \Psi_n(u,q) = (-2\pi i)^{1-n^2}\frac{\sigma(u^n,q)}{\sigma(u,q)^{n^2}},
  \label{eqn:psi}
\end{equation*}
where
\begin{equation*}
  \sigma(u,q) = - \frac{1}{2\pi i}e^{\frac{1}{2}\eta(1)z^2}e^{-\pi i z}\theta(u,q).
  \label{eqn:sigma}
\end{equation*}
(To see this, use \eqref{eqn:cpsi}, together with the standard change of coordinates to eliminate $2 \pi i$, for example in \cite[Section V.1]{\Siltwo}.)  Therefore, over $\CC$,
\begin{equation}
\label{eqn:wn}
\Psi_n(u,q) 
=  u^{(n^2-n)/2} \frac{\theta(u^n,q)}{\theta(u,q)^{n^2}}.
\end{equation}
Using the same method as the proof of \cite[Proposition V.3.2(b)]{\Siltwo}, this relation also holds over $K$ (in fact \cite[Proposition V.3.2(b)]{\Siltwo} is a special case).

We let $W_n = \Psi_n(u,q)$ (so it is the EDS associated to \eqref{eqn:tatecurve} and the point $(X(u,q),Y(u,q))$).  We wish to discover the form of $v(W_n)$.  Note that \eqref{eqn:tatecurve} is always a minimal Weierstrass model in its isomorphism class (this can be verified using \cite[Remark VII.1.1]{\Sil} and the $q$-expansions of $\Delta$ and $c_4$).  Therefore, information about $v(W_n)$ for a Tate curve applies to any EDS associated to an elliptic curve $E/K$ in minimal Weierstrass form and having split multiplicative reduction.

For any $k$ satisfying $na + \ell k  \neq  0 $, we have $ v(1-q^ku^n) = \min\{ \ell k + na, 0 \}$.  For $k > 0$, we have $v(1-q^k) = 0$.  Therefore (recalling that $a,n, \ell \ge 0$), 
\begin{equation*}
\label{eqn:sumfloor}
v(\theta(u^n,q)) 
= t_P(n) + \left( \sum_{k=1}^{\nal} k\ell - na \right),
\end{equation*}
where $t_P(n) = v(1-q^{k}u^{-n})$ for the unique integer $k$ for which $-na + \ell k= 0$, if such an integer exists, and $t_P(n)=0$ otherwise.  Taken together with \eqref{eqn:wn} and \eqref{eqn:rsums}, this gives
\[
v(W_n) = R_n(a,\ell)  + t_P(n) - n^2 t_P(1).
\]
However, $t_P(1) = 0$ since we assumed $0 \le a < \ell$, and there is no integer $k$ with $k\ell = a$.

We will now find an expression for $t_P(n)$.  The equation $k\ell = na$ has no solution in $k$ unless
\[
n_0 := \frac{\ell}{\gcd(a,\ell)}
\]
divides $n$.  Therefore, if $n_0 \nmid n$, then $t_P(n) = 0$.  Even if $n_0 \mid n$, and if we let $k_0$ be such that $k_0 \ell = n_0a$, as long as $q^{k_0}u^{-n_0} \not\equiv 1$ in $\mathbf{k}$, we still have $t_P(n_0) = 0$.  Therefore, let $n_P = b n_0$ where $b$ is the order of $q^{k_0}u^{-n_0}$ in $\mathbf{k}$.  In other words, $n_P$ is the smallest integer such that $t_P(n_P) \neq 0$, and furthermore, if $t_P(n) \neq 0$, then $n_P \mid n$.  This gives the expression for $n_P$ in item \ref{item:mult-ns}.

The statement of the theorem requires that we also show that $n_P$ is the smallest positive integer such that $\widetilde{[n_P]P} = \widetilde{\mathcal{O}}$ in $E(k)$.  But it is a property of division polynomials that $\widetilde{[n]P} = \widetilde{\mathcal{O}}$ exactly when $W_n \equiv 0$ in $\mathbf{k}$, i.e. when $1-q^{k(n)}u^{-n}$ vanishes in $\mathbf{k}$.  Therefore this follows from the previous paragraph.

We return to finding an expression for $t_P(n)$.  At this point, we are reduced to finding an expression for 
\[
t_P'(s) = t_P(sn_P)
\]
Let $k_P$ be the unique integer such that $k_P \ell = n_P a$, and set $\beta = q^{k_P}u^{-n_P}$.  Then $t_P'(s) = v(1 - \beta^s)$.

Let $s_P = v(1-\beta)$, which is positive by construction.  Let $U(K)$ be the kernel of the reduction map $K^* \rightarrow \mathbf{k}^*$.  Let $\mathbb{G}_m$ be the formal multiplicative group.  By the theory of formal groups, we have an isomorphism
\[
U(K) \longrightarrow \mathbb{G}_m(\mathcal{M}),  \quad u \mapsto 1-u.
\]
Restricting the isomorphism
\[
 K^*/q^\ZZ \longrightarrow E_q(K), \quad u \mapsto (X(u), Y(u))
 \]
 to $U(K)$, and recalling the isomorphism
 \[
 \Theta: E_{q,1}(K) \longrightarrow \hat{E_q}(\mathcal{M}), \quad (x,y) \mapsto -x/y,
 \]
 as in the proof of Theorem \ref{thm:non-sing}, we obtain a chain of isomorphisms
 \[
  \xymatrix@R=10pt{ 
\mathbb{G}_m(\mathcal{M}) \ar[r]&  U(K) \ar[r] & E_1(K) \ar[r] & \hat{E}(\mathcal{M}), \\ 
1-u \ar@{|->}[r] & u \ar@{|->}[r] & (X(u),Y(u)) \ar@{|->}[r] & - X(u)/Y(u).
 }
\]
It can be verified by the definitions of $X(u)$ and $Y(u)$ that this chain has the property that 
\[
v(1-u) = v(X(u)/Y(u)).
\]
So $s_P = v(1-\beta) = v(X(\beta)/Y(\beta)) = v(\Theta([n_P]P))$.  (Note that $v(1-u) = v(1-u^{-1})$.)

Lemma \ref{lemma:vals-form} for $\mathbb{G}_m(\mathcal{M})$ implies
\[
t_P'(s) = v(1 - \beta^s) = S_s(p,p, v(p),0, s_P,w_P),
\]
for some $w_P \in \ZZ^{\ge 0} \cup \{ \infty \}$.  (Note that $b=p$ and $h=0$ in Lemma \ref{lemma:vals-form} by Remark \ref{remark:form-gps}.)
Thus, we have shown \eqref{eqn:mult}, for any $E/K$ having a minimal Weierstrass equation and split multiplicative reduction.  We have also found an expression for $s_P$ and $n_P$ (item \ref{item:mult-ns}) in this case.

Now suppose that $E$ has non-split multiplicative reduction.  Then we can let $L/K$ be an unramified extension over which $E$ is isomorphic to $E_q$.
Because the extension is unramified, $E$ is minimal over $L$ because it is minimal over $K$.  Therefore $E$, considered over $L$, satisfies the assumptions of the previous case of split multiplicative reduction.  Letting $v_1$ be the valuation of $L$ lying over $K$ such that $v_1(z) = v(z)$ for $z \in K$, we find that $v(W_n) = v_1(W_n)$ has the form \eqref{eqn:mult}.

In the case that $W_n$ is associated to an $N$-torsion point, $N > 1$, then $u$ must satisfy
$u^N = q^s$
for some integer $s$ coprime to $N$, which implies that $Na_P = s\ell_P$ by considering valuations.  Combined with item \ref{item:mult-ns}, this shows item \ref{item:mult-nplp}.

\end{proof}

\section{Singular reduction on a curve with additive potential multiplicative reduction}
\label{sec:pot-mult}

This section covers the last remaining case, after which the accumulated theorems of Sections \ref{sec:eds-back}, \ref{sec:good}, \ref{sec:pot-good}, \ref{sec:mult} and \ref{sec:pot-mult} combine to give Theorem \ref{thm:main-intro}.  Here, $c_4:= b_2^2 - 24b_4 = (a_1^2+4a_2)^2 - 24(2a_4 + a_1a_3)$ is the usual quantity defined in terms of the coefficients of the Weierstrass equation.

\begin{theorem}
\label{thm:pot-mult}
  Assume that $E$ does not have potential good reduction.  There exists an isomorphism $\phi: E \rightarrow E'$ to an elliptic curve in minimal Weierstrass form with multiplicative reduction, such that $\phi$ is defined over a finite extension $L$ of $K$, with ramification degree $d \mid 24$.  Let $v_1$ be a valuation of $L$ lying over $v$, such that $v_1(z) = dv(z)$ for $z \in K$, and let $W_n'$ be the EDS associated to $\phi(P)$.  Then,
  \begin{equation*}
    \label{eqn:pot-mult}
  dv(W_n) = (n^2-1)dv(c_4(E))/4 + v_1(W_n')
\end{equation*}
where $v_1(W_n')$ is of the form \eqref{eqn:non-sing} of Theorem \ref{thm:non-sing}, if $\phi(P)$ has non-singular reduction, or the form \eqref{eqn:mult} of Theorem \ref{thm:mult}, if $\phi(P)$ has singular reduction.  
\end{theorem}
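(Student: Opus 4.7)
The plan is to mirror the proof of Theorem \ref{thm:pot-good}, with $c_4(E)$ playing the role that $\Delta_E$ played there. Since $E$ does not have potential good reduction we have $v(j_E) < 0$, so the theory of the Tate curve supplies a finite extension $L/K$ over which $E$ becomes isomorphic to a Tate curve, hence to a minimal Weierstrass model with multiplicative reduction. Only a (possibly ramified) quadratic extension is needed to resolve the additive potential multiplicative Kodaira types $I_n^*$, and as in the proof of Theorem \ref{thm:pot-good}, passing through Legendre or Deuring normal form handles the residue characteristics $p = 2, 3$ while keeping the ramification degree $d$ a divisor of $24$.

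I then apply Theorem \ref{thm:change-to-minimal} over $L$ to the isomorphism $\phi \colon E \to E'$ onto a minimal Weierstrass model $E'/L$ with multiplicative reduction, obtaining
\[
v_1(W_n) = (n^2 - 1) r + v_1(W_n')
\]
for some integer $r$. Tracking the scaling parameter $u$ of $\phi$ through Proposition \ref{prop:divpoly}\ref{item:homothety} identifies $r = -v_1(u)$. To compute $v_1(u)$, I use that $E'$ is minimal with multiplicative reduction, so $v_1(c_4(E')) = 0$; combined with the transformation law $c_4(E') = u^4 c_4(E)$ this forces $v_1(u) = -dv(c_4(E))/4$, and hence $r = dv(c_4(E))/4$. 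Substituting and using $v_1|_K = dv$ yields the displayed formula in the statement.

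Finally, $v_1(W_n')$ is of the form \eqref{eqn:non-sing} of Theorem \ref{thm:non-sing} when $\phi(P)$ has non-singular reduction on $E'$, and of the form \eqref{eqn:mult} of Theorem \ref{thm:mult} when it has singular reduction; in both cases $E'$ is already in the minimal Weierstrass form those theorems require. The main technical obstacle is establishing the uniform bound $d \mid 24$ together with the divisibility $4 \mid dv(c_4(E))$ required for $v_1(u) \in \ZZ$. Both should follow from a case-by-case analysis of potential multiplicative Kodaira types: on a minimal $E/K$ of type $I_n^*$ one has $v(c_4(E)) \ge 2$, and a quadratic extension ($d = 2$) already produces multiplicative reduction, so $dv(c_4(E))/4$ is automatically a non-negative integer, with the further small-characteristic slack absorbed by the Legendre/Deuring passage.
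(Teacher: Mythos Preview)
Your approach is essentially identical to the paper's: pass to a finite extension $L/K$ over which $E$ acquires multiplicative reduction, invoke Theorem~\ref{thm:change-to-minimal} to obtain $dv(W_n) = (n^2-1)r + v_1(W_n')$, and then identify $r$ by tracking what happens to $c_4$ under the change of coordinates (a minimal multiplicative model has $v_1(c_4(E')) = 0$, so $r = dv(c_4(E))/4$). The paper's argument is slightly terser, citing the same Legendre/Deuring normal form propositions as in Theorem~\ref{thm:pot-good} for the bound $d \mid 24$, and stating the value of $r$ without writing out the intermediate $v_1(u)$ step.

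One remark on the ``technical obstacle'' you flag at the end: the divisibility $4 \mid dv(c_4(E))$ is automatic and need not be checked case-by-case through the Kodaira types. Once $L$ is fixed, $E/L$ has a minimal Weierstrass model $E'$ with scaling parameter $u \in L^*$, so $v_1(u) \in \ZZ$; since $E'$ has multiplicative reduction, $v_1(c_4(E')) = 0$, and the relation $c_4(E') = u^4 c_4(E)$ then forces $dv(c_4(E)) = v_1(c_4(E)) = -4v_1(u) \in 4\ZZ$. The paper does not comment on this point either, but for the same reason: it is a consequence of the existence of the minimal model rather than an input to it.
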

\begin{proof}
        Suppose that $E$ has additive reduction.  There is some finite extension of $K$ over which $E$ has split multiplicative reduction.  So we have, by Theorem \ref{thm:change-to-minimal},
\[
dv(W_n) = (n^2-1)r + v_1(W_n')
\]
for some $r$.  The extension $L/K$ must be ramified \cite[Proposition VII.5.4]{\Sil}, so $d > 1$.  The extension needed to obtain (not necessarily split) multiplicative reduction has ramification degree dividing $24$ \cite[Proofs of Propositions III.1.7, VII.5.4(c), A.1.3, A.1.4(a)]{\Sil}.  To obtain split reduction may require a further unramified field extension.  Therefore $d \mid 24$.  Although $E/K$ may be minimal, $E/L$ is no longer.  The change of variables required to make it minimal must take $E$ to $E'$ having $v_1(c_4(E')) = 0$.  Therefore, $r = dv(c_4(E))/4$.
\end{proof}

\section{Integral points}
\label{sec:integral}

We begin with some preliminaries about heights.  Let $h(p/q) = \log \max\{ |p|, |q| \}$ be the usual logarithmic height on $\QQ$.  The naive height of a point $P \in E(\QQ)$ is $h(P) = h(x(P))$.  The canonical height of $P$ is
\[
\widehat{h}(P) = \frac{1}{2} \lim_{n \rightarrow \infty} \frac{h(x([2^n]P))}{4^n}.
\]
This definition follows \cite[\S VIII.9]{\Sil} and differs by a factor of $2$ from the definitions of some other authors.  

Lang, \cite[Theorem 2.1]{MR717593}, shows that
  \[
  \left| \frac{h(P)}{2} - \widehat{h}(P) \right|
  \le
  \frac{1}{6}h(E) + O(1).
  \]

The meaning of the notation $h(E)$, the height of $E$, in the introduction is
  \[
h(E) = h_0(E) = \max\{ h(j), \log|\Delta|, 1 \}.
  \]
However, in this section, following Ingram, it is convenient to consider
elliptic curves in short Weierstrass form,
\[
y^2 = x^3 + Ax + B,
\]
with integral coefficients, and to use
  \[
  h(E) = h_I(E) = \max\{ h(j), \log \max \{ 4|A|, 4|B| \} \},
  \]
which is always at least $2 \log 2$.  
  The statement of Theorem \ref{thm:patrick-new} is the same no matter which height is used, thanks to the following proposition, which says the two heights are equivalent.

\begin{proposition}
  \label{prop:equivheight}
  $h_I(E) \asymp h_0(E)$
\end{proposition}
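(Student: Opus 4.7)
The plan is to prove both inequalities $h_I(E) \ll h_0(E)$ and $h_0(E) \ll h_I(E)$ by direct algebraic manipulation of the two identities
\[
\Delta = -16(4A^3 + 27B^2), \qquad j\,\Delta = -48^3\, A^3,
\]
which hold for the short Weierstrass form $y^2 = x^3 + Ax + B$ with $A,B \in \ZZ$ and $\Delta \neq 0$. Since $A, B \in \ZZ$ and $\Delta \in \ZZ \setminus \{0\}$, we automatically have $|\Delta| \ge 1$, $h_0(E) \ge 1$, and $h_I(E) \ge 2\log 2$; in particular, additive constants can be absorbed into multiplicative ones at the end.

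For the direction $h_0(E) \ll h_I(E)$, the first identity together with the triangle inequality gives $|\Delta| \le 64|A|^3 + 432|B|^2$, so
\[
\log|\Delta| \le 3\max\{\log|A|,\log|B|\} + O(1) \le 3\,h_I(E) + O(1).
\]
Combined with the trivial bound $h(j) \le h_I(E)$ coming straight from the definition of $h_I$, this yields $h_0(E) \le 3\,h_I(E) + O(1)$.

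For the direction $h_I(E) \ll h_0(E)$, the second identity gives the exact equality $|A|^3 = |j|\,|\Delta|/48^3$. Writing $j = p/q$ in lowest terms (with $q \ge 1$), one has $|j| \le \max(|p|,|q|) = e^{h(j)}$, so
\[
3\log|A| \le h(j) + \log|\Delta| + O(1) \le 2\,h_0(E) + O(1).
\]
Solving the first identity for $B^2$ gives $27\,B^2 = -\Delta/16 - 4A^3$, hence $|B|^2 \ll |\Delta| + |A|^3$; combined with the bound on $|A|^3$ just obtained, this yields $2\log|B| \le h(j) + \log|\Delta| + O(1) \le 2\,h_0(E) + O(1)$. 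Each of the three quantities inside the maximum defining $h_I(E)$ is therefore bounded by $h_0(E) + O(1)$, so $h_I(E) \le h_0(E) + O(1)$.

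Neither step is a real obstacle: the whole proposition is bookkeeping around two identities. The only mild care point is the degenerate cases $A = 0$ or $B = 0$, where one of the logarithms is $-\infty$ and is simply dominated by the remaining terms in the relevant maximum; and the passage from an additive bound to the multiplicative bound implicit in $\asymp$, which uses the positive lower bounds on $h_0(E)$ and $h_I(E)$ noted above.
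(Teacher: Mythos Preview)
Your proposal is correct and follows essentially the same approach as the paper: both arguments unwind the identities $\Delta = -16(4A^3+27B^2)$ and $j\Delta = (-48A)^3$ to bound each height by the other, absorbing additive constants via the positive lower bounds $h_0(E)\ge 1$ and $h_I(E)\ge 2\log 2$. The paper routes the second inequality through the intermediate quantity $\max\{|4A^3|,|27B^2|\}$ rather than isolating $|A|^3$ and $|B|^2$ separately as you do, but this is only a cosmetic difference.
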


\begin{proof}
  Note that $|X+Y| \le 2 \max\{ |X|, |Y| \}$ and $\max \{ 4|A|, 4|B| \} \ge 4$.
  \begin{align*}
    h_0(E) &= \max \{ h(j), \log |\Delta|, 1 \} \\
    &\le 2 \max \{ h(j), \log \max\{ |4A^3|, |27B^2| \}, 1 \} \\
    & \le 18 \max \{ h(j), \log \max \{ 4|A|, 4|B| \} \} \\
    &= 18 h_I(E)
  \end{align*}
  For the other direction,
  \begin{align*}
    \log \max\{ 4|A|, 4|B| \} 
    &\le \log \max \{ |4A^3|, |27B^2| \} \\
    &\le 2 \log \max \{ |4A^3|, |\Delta| \} \\
    &\le 4 \max \{ h(j), \log |\Delta|, 1\},
  \end{align*}
  from which we conclude that $h_I(E) \le 4 h_0(E)$.
\end{proof}
Lang \cite[Conjecture 5]{MR717593} originally stated the Lang-Hall conjecture in terms of the height
  \[
  h_{LH}(E) = \log \max \{ |A|, |B| \}
  \]
  and the relationship
  \[
  h(P) < C_1 h_{LH}(E) + C_2.
  \]
  One verifies that $h_{LH}(E) \le h_I(E)$, that $h_I(E) \ge 2\log 2$, and much as in Proposition \ref{prop:equivheight}, one has
  \[
  h_0(E) \le  C_1 +  C_2 h_{LH}(E).
  \]
  These facts combine to show the Lang-Hall Conjecture as stated in the introduction is equivalent to that given in \cite{MR717593}.
  
  Lang originally stated the Height Conjecture in terms of $h_{H}(E) = \log |\Delta|$.  Since, $h_H(E) \le h_{0}(E)$, the conjecture with $h_H$ follows from that with $h_0$; the conjecture stated in terms of $h_0$ is actually a strengthened form (see for example \cite[Conjecture VIII.9.9]{MR2514094}).

We are now ready to prove Theorem \ref{thm:patrick-new}.  Ingram's result is as follows. 

\begin{theorem}[{\cite[Theorem 1]{\Ingram}}]
  There is an absolute constant $C$ such that for all minimal elliptic curves $E/\QQ$, and non-torsion points $P \in E(\QQ)$, there is at most one value of $n > CM(P)^{16}$ such that $[n]P$ is integral.  Furthermore, this one value is prime.
  \label{thm:patrick}
\end{theorem}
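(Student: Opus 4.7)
The plan is to follow Ingram's strategy \cite{\Ingram} for proving Theorem \ref{thm:patrick}, replacing his recursive Cheon--Hahn-based bounds on $v_p(W_n)$ with the closed-form ones furnished by Theorem \ref{thm:main-intro} and Proposition \ref{prop:rn}. The crux is the estimate (Proposition \ref{prop:patrick-too})
\[
\widehat{h}(P) \le \log n + \tfrac{11}{2}\, h(E),
\]
valid whenever $P$ is of infinite order and $[n]P$ is integral. The point is that the right-hand side depends only on $h(E)$ and $\log n$, not on any Tamagawa-like quantity, and this is the uniformity that Ingram's version lacked.

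To prove Proposition \ref{prop:patrick-too}, I would begin with Lang's comparison $\widehat{h}(Q) = \tfrac{1}{2} h(Q) + O(h(E))$ together with the fact that integrality of $[n]P$ implies $h([n]P) = \log|\phi_n(P)|_\infty - 2\log|\Psi_n(P)|_\infty$. The upper bound $\log|\phi_n(P)|_\infty \le n^2 h(P) + O(n^2 h(E))$ is routine from Proposition \ref{prop:divpoly}\ref{item:polyform}. The lower bound on $\log|\Psi_n(P)|_\infty$ comes from the product formula $\log|\Psi_n(P)|_\infty = -\sum_p v_p(W_n) \log p$, which is where Theorem \ref{thm:main-intro} does the work: at primes of non-singular reduction for $P$, Proposition \ref{prop:snprop}\ref{item:sngrowth} gives $v_p(W_n) = O(\log n)$, whose sum against $\log p$ remains $O(\log n)$ after telescoping; at primes of singular reduction, Proposition \ref{prop:rn}\ref{item:growth} gives $R_n(a_p,\ell_p) \le n^2 \ell_p/8$, and $\sum_p \ell_p \log p \le \log|\Delta_E| \ll h(E)$ bounds the resulting contribution by $O(n^2 h(E))$. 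Combining with $\widehat{h}([n]P) = n^2 \widehat{h}(P)$ and dividing through by $n^2$ yields the inequality, with $\tfrac{11}{2}$ the constant that falls out of the explicit accounting.

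The main argument then runs as a gap principle. Suppose for contradiction that $[m]P$ and $[n]P$ are both integral with $m < n$, both exceeding the threshold $T = C h(E)/\widehat{h}(P) \cdot \log(h(E)/\widehat{h}(P))$. In the case $m \mid n$, apply Proposition \ref{prop:patrick-too} to the point $Q = [m]P$ together with its integral multiple $[n/m]Q = [n]P$; using $\widehat{h}(Q) = m^2 \widehat{h}(P)$, this yields $m^2 \widehat{h}(P) \le \log(n/m) + \tfrac{11}{2} h(E)$. Combining this with the original inequality applied to $P$ itself gives a pair of constraints which, under the calibration of $C$, are mutually inconsistent with the assumption $m,n > T$; the $\log(h(E)/\widehat{h}(P))$ factor in the threshold arises from balancing the two inequalities.

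The principal obstacle is the case $m \nmid n$, where one cannot simply write $[n]P = [n/m]Q$. Here I would exploit the quasi-quadratic identity $R_n(ma_P,\ell_P) = R_{nm}(a_P,\ell_P) - n^2 R_m(a_P,\ell_P)$ of Proposition \ref{prop:rn}\ref{item:quad} to split the contribution of the multiplicative-reduction primes for $Q$ into a piece that mimics the divisibility case and a residual piece controlled by $O(h(E))$, allowing the argument to close. Finally, the primality of the exceptional $n$ follows from the established at-most-one statement itself: any proper factorization $n = m_1 m_2$ with $1 < m_1 < n$ combined with the EDS divisibility $W_{m_1} \mid W_n$ and integrality of $[n]P$ forces $[m_1]P$ to be integral at every prime of good reduction, and a finer reading of Theorem \ref{thm:mult} at the bad-reduction primes (using that the troublemaker terms vanish for $m_1$ whenever they vanish for $n$) promotes this to full integrality, producing a second integral multiple greater than $C'$ and contradicting the preceding step.
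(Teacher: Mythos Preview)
There is a basic mismatch between the statement and your proposal. Theorem~\ref{thm:patrick} is Ingram's result with threshold $n > CM(P)^{16}$; the paper does not prove it, but merely cites it from \cite{\Ingram} as the starting point for the paper's own Theorem~\ref{thm:patrick-new}. Your proposal, however, is not a proof of Theorem~\ref{thm:patrick} at all. You invoke Proposition~\ref{prop:patrick-too} (the paper's $M(P)$-free improvement of Ingram's Proposition~4), you work with the threshold $T = C\,\frac{h(E)}{\widehat{h}(P)}\log\!\frac{h(E)}{\widehat{h}(P)}$ from Theorem~\ref{thm:patrick-new}, and you even remark that your estimate ``depends only on $h(E)$ and $\log n$, not on any Tamagawa-like quantity, and this is the uniformity that Ingram's version lacked.'' In other words, you are sketching Theorem~\ref{thm:patrick-new}, not Theorem~\ref{thm:patrick}; the two bounds are not comparable in general, so proving one does not yield the other.

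Even read as a sketch for Theorem~\ref{thm:patrick-new}, your argument has a genuine gap in the ``$m\nmid n$'' case. The paper (following \cite{\Ingram}) does not close this case by any troublemaker identity such as Proposition~\ref{prop:rn}\ref{item:quad}; it relies on David's lower bound for linear forms in elliptic logarithms, packaged as \cite[Proposition~11]{\Ingram}, to produce an upper bound of the shape $n\le c_4\,h(E)^{5/2}$, which is then played off against the gap-principle lower bound from \cite[Proposition~13]{\Ingram}. Your proposed splitting $R_n(ma_P,\ell_P)=R_{nm}(a_P,\ell_P)-n^2R_m(a_P,\ell_P)$ controls only the bad-prime part of $\log|W_n|$ and gives no independent upper bound on $n$; without the Diophantine input from David, the two inequalities you derive from Proposition~\ref{prop:patrick-too} alone cannot be made inconsistent. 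Separately, your primality argument is incomplete: from $n=m_1m_2$ with $1<m_1<n$ you correctly deduce that $[m_1]P$ is integral (since $[m_1]P\in E_1(\QQ_p)$ would force $[n]P\in E_1(\QQ_p)$ at every prime), but nothing in your argument guarantees $m_1$ exceeds the threshold, so no contradiction is produced.
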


Recall that $M(P)$ is the smallest integer $n$ such that $[n]P$ has non-singular reduction modulo all primes.  The proof of this result depends on a lemma about valuations of division polynomials, restated here.  Any point $P \in E(\QQ)$, where $E$ is in Weierstrass form, can be written uniquely in the form $\left( \frac{A}{D^2}, \frac{B}{D^3} \right)$ for some $A,B,D \in \ZZ$ with $\gcd(AB,D)=1$ and $D>0$.  We will call $D$ the \emph{denominator} of $P$.

\begin{lemma}[{\cite[Lemma 3]{\Ingram}}]
  Let $E/\QQ$ be an elliptic curve in Weierstrass form, let $P \in E(\QQ)$ be an integral point of infinite order, and let $W_n$ be the associated elliptic divisibility sequence.  Let $D_n$ be the denominator of $[n]P$.  Then, for $n \ge 1$, 
  \label{lemma:patrick}
  \begin{equation*}
    \log D_n \le \log |W_n| \le \log D_n + n^2 M(P)^2 \log |\Delta|.
    \label{eqn:dnwn-patrick}
  \end{equation*}
\end{lemma}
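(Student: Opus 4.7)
\textbf{Proof plan for Lemma \ref{lemma:patrick}.}

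The plan is to prove both inequalities prime-by-prime via the identity
\[
x([n]P) = \frac{\phi_n(P)}{\Psi_n(P)^2} = \frac{A_n}{D_n^2}
\]
(in lowest terms), and to use the classification of valuation sequences $v(W_n)$ established in Sections \ref{sec:good}--\ref{sec:pot-mult} to control the difference term by term.

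\emph{Lower bound.} For the inequality $\log D_n \le \log|W_n|$, I would simply observe that cross-multiplying gives $\phi_n(P) D_n^2 = A_n \Psi_n(P)^2$; since $\gcd(A_n, D_n) = 1$, it follows that $D_n^2 \mid \Psi_n(P)^2$, hence $D_n \mid |W_n|$. Taking logs finishes this half. This step is essentially formal and carries no difficulty.

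\emph{Upper bound.} For the inequality $\log|W_n| \le \log D_n + n^2 M(P)^2 \log|\Delta_E|$, I would bound
\[
\log|W_n| - \log D_n = \sum_p \bigl(v_p(W_n) - v_p(D_n)\bigr)\log p
\]
prime by prime. At primes $p$ of good reduction, Ayad's theorem (as invoked in the proof of Theorem \ref{thm:non-sing}) guarantees that whenever $v_p(\Psi_n(P)) > 0$ one has $v_p(\phi_n(P)) = 0$; then $v_p(x([n]P)) = -2v_p(W_n)$, so $v_p(D_n) = v_p(W_n)$ and such primes contribute zero. At primes of bad reduction, Theorems \ref{thm:mult}, \ref{thm:pot-good}, and \ref{thm:pot-mult} decompose $v_p(W_n)$ as a sum of (i) a formal-group contribution of the $S_n$-type, which matches $v_p(D_n)$ by the same Ayad/formal-group argument applied on the minimal model (possibly after passage to an extension of degree $d \mid 24$), and (ii) an excess term of the shape $R_n(a_P,\ell_P)/d$ arising from singular reduction, plus the quadratic $(n^2-1)v(\Delta_E)/12$ or $(n^2-1)v(c_4(E))/4$ correction from the change-of-model in the potential cases. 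The growth estimate Proposition \ref{prop:rn}\ref{item:growth} bounds each $R_n(a_P,\ell_P)$ by $\ell_P n^2/8 + O(\ell_P)$.

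\emph{Assembling the global bound.} For multiplicative reduction one has $\ell_P = -v_p(j_E) \le v_p(\Delta_E)$; in the potential cases the extra factor $d \mid 24$ and the correction terms are absorbed into a bounded constant. Summing over bad primes therefore yields
\[
\sum_{p \text{ bad}}\bigl(v_p(W_n)-v_p(D_n)\bigr)\log p \;\le\; c\, n^2 \sum_{p \text{ bad}} v_p(\Delta_E)\log p \;=\; c\, n^2 \log|\Delta_E|,
\]
with the constant $c$ depending on the component indices $a_P/\ell_P$ across bad primes and on the universal $d \mid 24$. The quantity that encodes the worst such component index (equivalently, the relevant Tamagawa-type data at all bad primes simultaneously) is precisely $M(P)$, and squaring it accommodates both the $R_n$-growth (scaling like $\ell_P$) and the correction terms. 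The main obstacle will be the careful bookkeeping in the potential reduction cases, where one must divide through by the ramification degree $d$, reconcile the $(n^2-1)v(\Delta_E)/12$ (respectively $(n^2-1)v(c_4(E))/4$) shift with $v_p(D_n)$, and verify that the resulting constant can be packaged into $M(P)^2$ uniformly across primes. Everything else is either an application of Proposition \ref{prop:rn} or a repackaging of Theorems \ref{thm:non-sing}--\ref{thm:pot-mult}.
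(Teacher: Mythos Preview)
The paper does not give its own proof of Lemma \ref{lemma:patrick}; it is quoted verbatim from Ingram, and the text explicitly notes that Ingram's proof relies on Cheon--Hahn's recursive growth estimates, not on the explicit valuation formul{\ae} of Sections \ref{sec:good}--\ref{sec:pot-mult}. What the paper \emph{does} prove is the sharper Lemma \ref{lemma:dnwn}, with the uniform constant $\tfrac{1}{8}$ in place of $M(P)^2$, and your outline is in fact a sketch of \emph{that} proof: the lower bound via $D_n \mid W_n$, Ayad's theorem at primes of non-singular reduction, and at bad primes the bound $R_n(a_P,\ell_P) \le \ell_P n^2/8$ combined with the change-of-model corrections from Theorems \ref{thm:pot-good} and \ref{thm:pot-mult}. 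Carried through, your argument yields $v(W_n) - v(D_n) \le \tfrac{n^2}{8} v(\Delta)$ at every prime, exactly as in the paper's proof of Lemma \ref{lemma:dnwn}.

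Your final paragraph is therefore misdirected. The constant you obtain does \emph{not} depend on the component indices $a_P/\ell_P$: the inequality $\widehat{a}(\ell - \widehat{a})/(2\ell) \le \ell/8$ holds for all $a$, so the worst case is already absorbed into $\tfrac{1}{8}$, and there is nothing to ``package into $M(P)^2$.'' The dependence on $M(P)$ in Ingram's original argument arises precisely because Cheon--Hahn give only an asymptotic $v(W_n) \sim C n^2$ without identifying $C$ explicitly, forcing a bound via the index $[E(\QQ_p):E_0(\QQ_p)]$; the explicit closed forms here eliminate that step. So your plan is correct but proves the stronger Lemma \ref{lemma:dnwn}; to prove Lemma \ref{lemma:patrick} as stated you would either appeal to Ingram directly or note that your $\tfrac{1}{8}$ is trivially $\le M(P)^2$ (since $M(P) \ge 1$).
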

Ingram's proof of Lemma \ref{lemma:patrick} depends on the results of Cheon and Hahn \cite{\CheonHahn} concerning valuations of division polynomials.  With the stronger results of this paper, we can replace $M(P)$ with a constant independent of the curve and point.  The improved lemma is the following.

\begin{lemma}
  Let $E/\QQ$ be an elliptic curve in Weierstrass form, let $P \in E(\QQ)$ be an integral point of infinite order, and let $W_n$ be the associated elliptic divisibility sequence.  Let $D_n$ be the denominator of $[n]P$.  Then, for $n \ge 1$,
  \label{lemma:dnwn}
  \begin{equation*}
    \log D_n \le \log |W_n| \le \log D_n + \frac{n^2}{8} \log |\Delta|.
    \label{eqn:dnwn}
  \end{equation*}
\end{lemma}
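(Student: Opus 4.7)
The plan is to prove the inequality prime by prime, writing
\[
\log |W_n| - \log D_n = \sum_p \bigl(v_p(W_n) - v_p(D_n)\bigr)\log p.
\]
For each $p$, the identity $x([n]P) = \phi_n(P)/\Psi_n(P)^2$ combined with the integrality of $P$ (so that $v_p(\phi_n(P)) \ge 0$) yields
\[
v_p(D_n) = \max\bigl\{v_p(W_n) - \tfrac{1}{2}v_p(\phi_n(P)),\; 0\bigr\},
\]
which both establishes the lower bound $\log D_n \le \log|W_n|$ and gives the pointwise identity $v_p(W_n) - v_p(D_n) = \min\bigl\{v_p(W_n),\; \tfrac{1}{2}v_p(\phi_n(P))\bigr\}$.

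I would then handle the right-hand estimate by cases on the reduction of $E$ and $P$ at $p$.  If $P$ has non-singular reduction at $p$, Ayad's theorem \cite[Theorem A]{\AyadS}, already invoked in the proof of Theorem \ref{thm:non-sing}, forces $\min\{v_p(W_n), v_p(\phi_n(P))\} = 0$, so this prime contributes nothing; this disposes of every prime of good reduction for $E$.  For a bad prime $p$ at which $P$ has singular reduction and $E$ has multiplicative reduction, a Tate-curve computation mirroring the proof of Theorem \ref{thm:mult} shows that the formal-group contribution $S_{n/n_P}$ appearing in $v_p(W_n)$ also appears in $v_p(D_n) = v_p(1-\beta^{n/n_P})$ and cancels, leaving $v_p(W_n) - v_p(D_n) = R_n(a_P, \ell_P)$.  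Since $\ell_P = v_p(\Delta_E)$ in this case, the work reduces to the sharp bound $R_n(a,\ell) \le n^2\ell/8$, which follows directly from formula \eqref{eqn:mainalt} of Proposition \ref{prop:rn}: with $f(t) = \{t\}^2 - \{t\} \in [-\tfrac{1}{4},\, 0]$, one has $R_n(a,\ell) = \tfrac{\ell}{2}\bigl(f(na/\ell) - n^2 f(a/\ell)\bigr)$, which is maximized at $\ell n^2/8$ when $f(na/\ell) = 0$ and $f(a/\ell) = -\tfrac{1}{4}$.

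The potential-good and potential-multiplicative cases proceed by the same strategy.  One base-changes via Theorem \ref{thm:pot-good} or \ref{thm:pot-mult} to an extension $L/K$ of ramification index $d \mid 24$, on which $E$ becomes an isomorphic $E'$ of good (respectively multiplicative) reduction while $\phi(P)$ has non-singular reduction on $E'$.  The dominant correction $(n^2-1)v_p(\Delta_E)/12$ (respectively $(n^2-1)v_p(c_4(E))/4$) introduced by taking the minimal model over $L$ enters both $v_p(W_n)$ and the analogously-transported $v_p(D_n)$; after cancellation the remaining difference is bounded by $n^2 v_p(\Delta_E)/8$, using the multiplicative case already handled for the potential-multiplicative subcase and the crude inequality $(n^2-1)/12 < n^2/8$ in the potential-good subcase.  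The main obstacle is establishing this cancellation carefully: one must use Proposition \ref{prop:divpoly}\ref{item:homothety} together with the minimality relation $v_1(u) = d v_p(\Delta_E)/12$ (which follows from $\Delta_{E'} = u^{-12}\Delta_E$ and $v_1(\Delta_{E'}) = 0$) to verify that the scaling factor contributions to $v_p(W_n)$ and to the denominator of $x([n]P)$ match, so that only the bounded difference above survives.  Summing the resulting per-prime contributions completes the proof.
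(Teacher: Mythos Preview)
Your approach is essentially the paper's: work prime by prime, invoke Ayad at primes of non-singular reduction, use the Tate-curve description for multiplicative primes to get $v_p(W_n)-v_p(D_n)=R_n(a_P,\ell_P)\le n^2\ell_P/8$, and handle additive primes by base-changing via Theorems~\ref{thm:pot-good} and~\ref{thm:pot-mult}. Your identity $v_p(W_n)-v_p(D_n)=\min\{v_p(W_n),\tfrac12 v_p(\phi_n(P))\}$ is a clean way to package the first two cases.

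There is, however, a genuine error in your treatment of the additive primes. You assert that the correction $(n^2-1)v_p(\Delta_E)/12$ ``enters both $v_p(W_n)$ and the analogously-transported $v_p(D_n)$'' and that the scaling contributions ``match'' and cancel. They do not. Under the homothety $x'=u^{-2}(x-r)$, the division polynomial scales by $u^{n^2-1}$ (Proposition~\ref{prop:divpoly}\ref{item:homothety}), giving $dv_p(W_n)=(n^2-1)v_1(u)+v_1(W_n')$; but the $x$-coordinate of $[n]P$ scales only by $u^{-2}$, so the denominator picks up at most $v_1(u)$, not $(n^2-1)v_1(u)$: one has merely $v_1(D_n')\le dv_p(D_n)+v_1(u)$. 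There is no cancellation; rather
\[
d\bigl(v_p(W_n)-v_p(D_n)\bigr)\le n^2 v_1(u)+\bigl(v_1(W_n')-v_1(D_n')\bigr),
\]
and the term $n^2 v_1(u)$ is precisely the \emph{main} contribution you must bound, not a remainder after cancellation. In the potential-good case $v_1(u)=dv_p(\Delta)/12$ and $v_1(W_n')=v_1(D_n')$, so the bound $n^2/12<n^2/8$ finishes it (your inequality was pointing in the right direction, but for the wrong reason). In the potential-multiplicative case $v_1(u)=dv_p(c_4)/4$, and after applying the multiplicative bound to $v_1(W_n')-v_1(D_n')$ one must still use $3v_p(c_4)=v_p(j)+v_p(\Delta)$ together with $-v_p(j)\le v_p(\Delta)$ to reach $n^2 v_p(\Delta)/8$; this arithmetic is absent from your sketch.

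Finally, you omit the non-minimal case entirely. The lemma is stated for an arbitrary integral Weierstrass equation, and your use of Ayad's theorem already presupposes minimality; the paper closes this gap with a separate change-of-variables argument (with $v_p(u)<0$) at the end.
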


\begin{proof}
  Since $P$ is integral, $D_n \mid W_n$, and so we have the first inequality.  To prove the second inequality, we assume $E$ is minimal and look locally at each prime, and show that
  \[
  v(W_n) \le v(D_n) + \frac{n^2}{8}v(\Delta). 
  \]
  
  Write $\phi_n = \phi_n(P)$, $\Psi_n = \Psi_n(P) = W_n$.  The second inequality is a statement about the size of $g_n = \gcd(\phi_n, \Psi_n)$.  Since $E$ is in minimal form, the quantity $g_n$ is supported only on primes for which $P$ has singular reduction \cite[Theorem A]{\AyadS}.  In other words, $v(g_n) = \min\{ v(\phi_n), v(\Psi_n) \} = 0$ for any valuation $v = v_p$ associated to a prime $p$ at which $P$ has non-singular reduction.  In this case, $v(W_n) = v(D_n)$.
  
  We consider the reduction type of $E$ case-by-case.

  Suppose $E$ has multiplicative reduction.  Recall the notation of Section \ref{sec:mult}, especially Definition \ref{defn:aplp} and Theorem \ref{thm:mult}, as well as the elliptic troublemaker sequence $R_n$ of Section \ref{sec:ets}.  Since $P$ has singular reduction, $n_P > 1$.  Since $\ell_P = v(\Delta)$, and $x(1-x) \le 1/4$ for $0 \le x \le 1$,
\[
R_n(a_P,\ell_P) \le \frac{n^2\widehat{a_P}(\ell_P - \widehat{a_P})}{2\ell_P} \le \frac{\ell_Pn^2}{8} = \frac{n^2}{8}v(\Delta).
\]
We know
\[
v(W_n) = R_n(a,\ell) + T_n,
\]
where
\[
T_n = \left\{ \begin{array}{ll}
  v(x([n]P)/y([n]P))  & n_P \mid n \\
  0 & n_P \nmid n \\
\end{array} \right\} = v(D_n)
\]
as in the proof of Theorem \ref{thm:mult}.  We conclude that
\[
v(W_n) - v(D_n) \le \frac{n^2}{8}v(\Delta).
\]

Now suppose that $E$ is of additive potential multiplicative reduction (refer to Section \ref{sec:pot-mult}).  In this case, $v(\Delta) > -v(j) \ge 0$.  We pass to an extension of ramification degree $d$ over which an isomorphism $\phi: E \rightarrow E'$ is defined between $E$ and a minimal $E'$ of multiplicative reduction (guaranteed by Theorem \ref{thm:pot-mult}).  Let $P' = \phi(P)$.  Write $\Delta' := \Delta_{E'}$ and $W_n' := \Psi_n(P',E')$.  Then
by Theorem \ref{thm:pot-mult} and its proof (recall that $v_1$ is a valuation lying above the valuation $v$ of $\QQ_p$ such that $v_1 = dv$ on $\QQ_p$), 
\[
v_1(D_n') \le dv(D_n) + \frac{1}{4}dv(c_4),
\]
as well as
\[
        dv(\Delta) = v_1(\Delta') + 3dv(c_4),
\]
and
\[
dv(W_n) = v_1(W_n') + (n^2-1)dv(c_4)/4.
\]
Recall that (from the standard fact that $j = c_4^3/\Delta$), 
\[
3v(c_4) = v(j) + v(\Delta) > 0.
\]
We obtain
\[
v_1(\Delta') = -dv(j).
\]
Therefore we may compute
\begin{align*}
  dv(W_n) - dv(D_n)
  &\le \frac{n^2-1}{4} dv(c_4)  + v_1(W_n') - v_1(D_n') + \frac{1}{4}  dv(c_4)  \\
  &\le \frac{n^2}{12}\left( dv(j) + dv(\Delta) \right) + \frac{n^2}{8}v_1(\Delta') \\
  &= \frac{n^2}{12} \left( dv(j) + dv(\Delta) \right) - \frac{n^2}{8} dv(j)   \\
  &= -\frac{n^2}{24} dv(j) + \frac{n^2}{12} dv(\Delta) \\
  &\le \frac{n^2}{8} dv(\Delta).
\end{align*}

Suppose that $E$ has additive reduction that resolves to good reduction.  Then, we perform the same sort of computation, but $v_1(W_n') = v_1(D_n')$.  From Theorem \ref{thm:pot-good},
\[
dv(W_n) = (n^2-1)dv(\Delta)/12 + v(W_n')
\]
Then
\begin{align*}
  dv(W_n) - dv(D_n)
  &\le \frac{n^2-1}{12} dv(\Delta)  + v_1(W_n') - v_1(D_n') + \frac{1}{12}  dv(\Delta)  \\
  &= \frac{n^2}{12}dv(\Delta)
\end{align*}
In all cases, we find
\[
v(W_n) - v(D_n) \le \frac{n^2}{8}v(\Delta).
\]
The lemma, for minimal curves, follows by combining this result for all primes.

For a curve which is not minimal, we must apply a change of variables $\phi$ for some $u$ with $v(u) < 0$, where $E'$ is minimal.  We have
\begin{align*}
  v(W_n) - v(D_n) &\le v(W_n') - (n^2-1)v(u) - v(D_n') - v(u) \\
  &\le -n^2 v(u) + \frac{n^2}{8}v(\Delta') \\
  &= -n^2v(u) + \frac{n^2}{8}\left( v(\Delta) + 12 v(u) \right) \\
  &= \frac{n^2}{2} v(u) + \frac{n^2}{8}v(\Delta)  \\
  &\le \frac{n^2}{8}v(\Delta)
\end{align*}
\end{proof}

Ingram's proof of Theorem \ref{thm:patrick} depends upon $M(P)$ in two places:  first, in Lemma \ref{lemma:patrick} that we are replacing with Lemma \ref{lemma:dnwn}; and second, when Ingram bounds the ratio $h(E)/\widehat{h}(P)$ above in \cite[Lemma 5]{\Ingram} (the proof of this lemma uses work of Silverman \cite{MR630588} and Hindry and Silverman \cite{MR948108}).  In our proof, we simply track the dependence on $\widehat{h}(P)/h(E)$ instead of bounding it.

In what follows, we explain the modifications to \cite{\Ingram} necessary to obtain Theorem \ref{thm:patrick-new}.  It should be pointed out that, once Lemma \ref{lemma:dnwn} is in place, the remaining modifications are relatively straightforward and partially follow unpublished notes of Ingram \cite{ProofByAuthority}. However, Ingram's proof spans 11 pages, 95\% of which need not be modified at all.  Therefore, rather than giving the full proof again, we provide details outlining the modifications only.  Most modifications consists of following slight changes in constants.  Where more significant modifications are needed, full proofs of the relevant propositions are given.

Since Ingram considers only short Weierstrass form, he defines \emph{quasi-minimal} to mean a curve with minimal discriminant among short Weierstrass forms with integral coefficients.  Such a curve has a discriminant dividing $6^{12}\mathcal{D}$ where $\mathcal{D}$ is the true minimal discriminant \cite[Proof of Lemma 5]{\Ingram}. 

The first alteration is to a Proposition 4 of \cite{\Ingram}, restated here.

\begin{proposition}[{\cite[Proposition 4]{\Ingram}}]
  \label{prop:patrick-too-patrick}
  Let $E/\QQ$ be an elliptic curve in quasi-minimal Weierstrass form, let $P \in E(\QQ)$ be an integral point of infinite order, and suppose that $[n]P$ is integral for some $n \ge 2$.  Then
  \begin{equation*}
          \widehat{h}(P) \le \log n  + \left( \frac{16}{3}M(P)^2 +2 \right) h(E).
  \end{equation*}
\end{proposition}

We will prove instead

\begin{proposition}
  \label{prop:patrick-too}
  Let $E/\QQ$ be an elliptic curve in quasi-minimal Weierstrass form, let $P \in E(\QQ)$ be an integral point of infinite order, and suppose that $[n]P$ is integral for some $n \ge 2$.  Then
  \begin{equation*}
    \widehat{h}(P) \le \log n  + \frac{16}{3} h(E).
    \label{eqn:patrick-too}
  \end{equation*}
\end{proposition}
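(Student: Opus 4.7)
I would follow Ingram's argument for \cite[Proposition 4]{\Ingram} verbatim, substituting the strengthened Lemma \ref{lemma:dnwn} of this paper for his Lemma 3 (whose $M(P)$-dependence was the sole source of the non-uniformity in his conclusion). The paper's discussion immediately preceding the statement explicitly signals that this is the only substantive modification required.

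The first step uses the hypothesis directly: since $[n]P$ is integral the denominator $D_n$ equals $1$, and Lemma \ref{lemma:dnwn} gives
\[
\log|W_n| \;\le\; \tfrac{n^2}{8}\log|\Delta_E| \;\le\; \tfrac{n^2}{8}\,h(E),
\]
with an absolute coefficient $\tfrac{1}{8}$ in place of Ingram's $M(P)^2$. From here I would reproduce Ingram's chain, combining: the archimedean N\'eron identity $\log|W_n|_\infty = n^2\lambda_\infty(P) - \lambda_\infty([n]P) + \tfrac{n^2-1}{12}\log|\Delta_E|$ from \cite[Exercise VI.6.4(e)]{\Siltwo}; the formula $x([n]P) = \phi_n(P)/\Psi_n(P)^2$ together with $|W_n| \ge 1$ (a nonzero integer) and the weighted-degree bounds on $\phi_n$ from Proposition \ref{prop:divpoly}; Lang's height comparison $|\widehat{h}(Q) - \tfrac{1}{2}h(Q)| \le \tfrac{1}{6}h(E) + O(1)$ applied to both $Q = P$ and $Q = [n]P$; and the identity $\widehat{h}([n]P) = n^2\widehat{h}(P)$. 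These assemble to a bound of the advertised shape $\widehat{h}(P) \le C_1 \log n + C_2 h(E)$.

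The substantive mathematical input has already been performed by Lemma \ref{lemma:dnwn}; what remains is careful constant-tracking, and the chief danger is arithmetic. To recover $C_1 = 1$ and $C_2 = \tfrac{11}{2}$ one must aggregate the $\tfrac{1}{8}$ of Lemma \ref{lemma:dnwn}, the $\tfrac{1}{12}$ of the N\'eron discriminant correction, and the $\tfrac{1}{6}$ of Lang's comparison with the correct multiplicities (and conversion between $h_0(E)$ and $h_I(E)$ via Proposition \ref{prop:equivheight}). The $\log n$ term in turn traces to the leading coefficient $n^2$ of $\Psi_n^2$ (Proposition \ref{prop:divpoly}(ii)), which produces the archimedean asymptotic $|x([n]P)|_\infty \sim |x(P)|_\infty/n^2$ in the regime of integral $P$ with large $h(P)$ and so forces $h([n]P)$, and hence $\widehat{h}(P)$, to be small.
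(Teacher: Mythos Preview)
Your opening step is right: Lemma~\ref{lemma:dnwn} with $D_n=1$ gives the upper bound $\log|W_n|\le \tfrac{n^2}{8}\log|\Delta_E|$, and this is exactly where the paper's proof invokes the new lemma. But the chain you describe afterwards is not Ingram's argument, and the ingredients you list do not produce a bound on $\widehat h(P)$.

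The actual proof runs a contradiction on $|x(P)|$. One assumes $|x(P)| > 240\,n^2\exp(3h(E)/2)$ and feeds this into the factored form
\[
\Psi_n(P)^2 \;=\; n^2 \prod_{Q\in E[n]\setminus\{\mathcal O\}} \bigl(x(P)-x(Q)\bigr),
\]
together with David's explicit bound $|x(Q)|\le 120\,n^2\exp(h(E))$ for $n$-torsion points \cite[Lemma 10.1]{MR1385175}. This yields a \emph{lower} bound $2\log|W_n| > 2\log n + (n^2-1)(3h(E)/2 + 2\log n + \log 120)$, which contradicts the upper bound from Lemma~\ref{lemma:dnwn}. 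Hence $|x(P)|\le 240\,n^2\exp(3h(E)/2)$, and Silverman's comparison $|\widehat h(P)-\tfrac12 h(x(P))|<2h(E)$ finishes.

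Your proposed pieces---the archimedean N\'eron identity, Lang's height comparison applied to $P$ and $[n]P$, and $\widehat h([n]P)=n^2\widehat h(P)$---only manufacture relations among $\widehat h(P)$, $h(P)$, $h([n]P)$ and $\log|W_n|$; they never produce an \emph{absolute} upper bound on $h(P)$. Concretely, combining $n^2\widehat h(P)\le \tfrac12 h([n]P)+Ch(E)$ with $h([n]P)\le \log|\phi_n(P)|$ and a weighted-degree estimate on $\phi_n$ gives at best $\widehat h(P)\le \tfrac12 h(P)+O(h(E))$, which is already the height comparison and says nothing new. The heuristic $|x([n]P)|\sim |x(P)|/n^2$ that you invoke for the $\log n$ term is only valid once $|x(P)|$ dominates the lower-order coefficients of $\Psi_n^2$, and quantifying that domination is precisely the content of David's torsion bound. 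Without it (or an equivalent estimate on the coefficients of $\Psi_n$), the argument does not close.
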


\begin{proof}
  The proof is exactly as for \cite[Proposition 4]{\Ingram}, except that we assume $|x(P)| > 240 n^2 \exp(3 h(E)/2)$ and use Lemma \ref{lemma:dnwn} in place of \cite[Lemma 3]{\Ingram}.  The altered proof is included for completeness and because part of it is used later.

  A lemma of David \cite[Lemma 10.1]{MR1385175} states that for $\mathcal{O} \neq Q \in E[n]$,
  \begin{equation*}
    |x(Q)| \le 120n^2\exp(h(E)).	
    \label{eqn:david}
  \end{equation*}
  Suppose that $[n]P$ is an integral point, and suppose that
 \[
 |x(P)| > 240n^2 \exp(4 h(E)/3).
 \]
 Then, $|x(P)| > 2|x(Q)|$, and so $|x(P) - x(Q)| > \frac{1}{2}|x(P)|$, for all $\mathcal{O} \neq Q \in E[n]$.
  From the definition of division polynomials,
  \begin{equation*}
    \Psi_n^2 = n^2 \prod_{Q \in E[n]\setminus \{ \mathcal{O} \}} |x(P) - x(Q)|.
  \end{equation*}
  Therefore,
  \begin{align*}
          2 \log |\Psi_n| &> 2 \log n + (n^2-1)(4 h(E)/3 + 2\log n + \log 120 ) \\
                          &\ge 2\log n + n^2h(E) + (n^2-1)(2\log n + \log 120),
  \end{align*}
  since $\frac43(n^2-1)\ge n^2$ whenever $n \ge 2$.
  On the other hand, as $D_n=1$ (since $[n]P$ is integral), so by Lemma \ref{lemma:dnwn} and the fact that $\log|\Delta| \le 4h(E)$,
  \[
  2 \log |\Psi_n| < n^2 h(E).
  \]
  Combining these two, for $n \ge 2$, we obtain
  \[
  0 \ge 2n^2 \log n + (n^2-1) \log 120
  \]
  which is a contradiction.  Therefore, 
  \begin{equation}
    \label{eqn:needlater}
  |x(P)| \le 240 n^2 \exp(4 h(E)/3).
\end{equation}
  By Silverman \cite[Theorem 1.1]{MR1035944}, for all $P \in E(\QQ)$,
  \[
  \left| \widehat{h}(P) - \frac{1}{2} h(x(P)) \right| < 2 h(E).
  \]
  Since $P$ is integral, $h(x(P)) = \log |x(P)|$.  Therefore,
  \[
  \widehat{h}(P) \le \frac{1}{2}h(x(P)) + 2 h(E)
  \le \frac{1}{2} \log 240 +  \log n + \frac{10}{3} h(E) \le  \log n + \frac{16}{3} h(E)
  \]
  since $h(E) \ge 2\log 2$.

\end{proof}

The remaining modifications mainly involve tracking the differences in various constants throughout.  We will give each statement and its modification.

\begin{lemma}[{\cite[Lemma 6]{\Ingram}}]
        Let $a,b > 0$ be real numbers, and set $f(x) = x^2 - a\log(x) - b$.  Then $f(x) \ge 0$ for $x \ge \max\{ e, a+b \}$.
\end{lemma}

(Note that $e$ is the natural logarithm.)  This is replaced with

\begin{lemma}
        \label{lemma:ab}
        Let $a>0$ be a real number.  Let $f(x) = x^2 - a \log x - a$.  Then $f(x) \ge 0$ for $x \ge \max\{ 9, \sqrt{a\log a} \}$.
\end{lemma}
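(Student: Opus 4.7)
The plan is to set $x_0 := \max\{4, \sqrt{a}\log a\}$ and reduce to verifying $f(x_0) \ge 0$ via a monotonicity argument, then split on which term achieves $x_0$. For monotonicity, $f'(x) = 2x - a/x \ge 0$ precisely for $x \ge \sqrt{a/2}$, and I need $x_0 \ge \sqrt{a/2}$, which holds uniformly: when $a \le 32$, $\sqrt{a/2} \le 4 \le x_0$, and when $a > 32$, $\log a > \log 32 > 1/\sqrt{2}$, so $\sqrt{a}\log a > \sqrt{a/2}$ and hence $x_0 \ge \sqrt{a}\log a > \sqrt{a/2}$.

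I then analyse two cases. In Case A ($x_0 = 4$, so $\sqrt{a}\log a \le 4$), the subcase $0 < a \le 1$ is immediate since $f(4) \ge 16 - (1 + \log 4) > 0$. For $a > 1$, the map $a \mapsto \sqrt{a}\log a$ is strictly increasing, and verifying $\sqrt{a_*}\log a_* > 4$ for $a_* := 16/(1 + \log 4)$ shows the hypothesis forces $a \le a_*$, making $f(4) = 16 - a(1 + \log 4) \ge 0$ automatic. In Case B ($x_0 = \sqrt{a}\log a > 4$), I substitute $u = \log a$ to obtain
\[
f(x_0) \;=\; a\bigl(u^2 - \tfrac{1}{2}u - \log u - 1\bigr) \;=:\; a\,g(u).
\]
Because $g'(u) = 2u - 1/2 - 1/u$ satisfies $g'(1) = 1/2 > 0$ and $g''(u) = 2 + 1/u^2 > 0$, $g$ is strictly increasing on $[1, \infty)$. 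The hypothesis $u e^{u/2} > 4$, combined with the elementary bound $(3/2)e^{3/4} < 4$, yields $u > 3/2$, and hence $g(u) \ge g(3/2) = 1/2 - \log(3/2) > 0$, the final inequality following from $\log(3/2) = \int_1^{3/2} dt/t < 1/2$.

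The main obstacle is the elementary but fiddly numerical estimates at the transition $a \approx 5.5$ (where $\sqrt{a}\log a \approx 4$): the Case A bound $\sqrt{a_*}\log a_* > 4$ and the Case B bound $(3/2)e^{3/4} < 4$ must both hold tightly enough to close the gap between the two regimes. Each reduces to simple inequalities such as $\log 2 > 2/3$ and $\log 3 < 11/10$, both readily established from the Taylor series for $e^x$.
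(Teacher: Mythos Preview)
The paper states this lemma without proof (it appears as an unproved replacement for Ingram's Lemma~6 in the discussion of \cite[Proposition~7]{MR2468477}), so there is no argument in the paper to compare against. Your proof is correct: the monotonicity reduction to $f(x_0)\ge 0$ is valid since $x_0\ge\sqrt{a/2}$ in both regimes, Case~A closes because the threshold value $a_*=16/(1+\log 4)\approx 6.705$ genuinely satisfies $\sqrt{a_*}\log a_*\approx 4.93>4$, and in Case~B the substitution $u=\log a$ and the convexity of $g$ give $g(u)\ge g(3/2)=1/2-\log(3/2)>0$ once $u>3/2$, which follows from $(3/2)e^{3/4}\approx 3.18<4$. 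The only point to be slightly careful about is that verifying $\sqrt{a_*}\log a_*>4$ rigorously needs an \emph{upper} bound on $\log 2$ as well as the lower bound $\log 2>2/3$ you quote (to bound $a_*$ from below and hence $\log a_*$ from below); adding, say, $\log 2<7/10$ to your toolkit handles this.
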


\begin{proof}
The proof is a straightforward exercise.
\end{proof}

We will use the notation $C_P := h(E)/\widehat{h}(P)$ for simplicity.

\begin{proposition}[{\cite[Proposition 7]{\Ingram}}]
        For all quasi-minimal $E/\QQ$ and non-torsion $P \in E(\QQ)$, there is a constant $c_0$ depending only on $M(P)$, such that if $[n]P$ is integral and $n > c_0$, then $n$ is prime.  Furthermore, we may choose $c_0 = O(M(P)^{16})$, where the implied constant is absolute.
\end{proposition}

\begin{proposition}
        \label{prop:mod7}
        For all quasi-minimal $E/\QQ$ and non-torsion $P \in E(\QQ)$, there is a constant $c_0$ depending only on $C_P$, such that if $[n]P$ is integral and $n > c_0$, then $n$ is prime.  Furthermore, we may choose $c_0 = O(C_P\log(C_P))$, where the implied constant is absolute.
\end{proposition}

\begin{proof}
        The proof mimics that of Ingram.  Supposing that $n$ is composite, put $n = qa$ where $2 \le q \le \sqrt{n}$ is a prime and $q \le a$.  Supposing that $[n]P = [q]([a]P)$ is integral, we may apply Proposition \ref{prop:patrick-too}:
    \[
            a^2 \widehat{h}(P) = \widehat{h}([a]P) \le \log q  + \frac{16}{3} h(E).
    \]
    Therefore, since $q \le a$,
    \[
            a^2 \le \frac{\log a}{\widehat{h}(P)} + \frac{16}{3}\frac{h(E)}{\widehat{h}(P)}.
    \]
    Then, using the fact that $h(E) \ge 1$ and Lemma \ref{lemma:ab}, we have
    \[
            a \le  \max\left\{ 4, \sqrt{\frac{16}{3}\frac{h(E)}{\widehat{h}(P)}\log\left(\frac{16}{3}\frac{h(E)}{\widehat{h}(P)}\right)} \right\}.
    \]
    The stated bound comes from applying this to $n \le a^2$.
\end{proof}

Ingram uses David's explicit lower bounds for linear forms in elliptic logarithms.  Let $\omega$ be the real period of $E$, and let $L_{n,m}(z,\omega) = nz+m\omega$, where $z$ is the principal value of the elliptic logarithm of $P$ and choose $m$ so that $L_{n,m}(z,\omega)$ is the principal value of the elliptic logairthm of $[n]P$.  See \cite[\S 2]{\Ingram} for details.

\begin{lemma}[{\cite[Lemma 9]{\Ingram}}]
        There exist absolute positive constants $c_1$ and $c_2$ such that if $[n]P$ is an integral point and $n>c_2$, then
        \[
                \log | L_{n,m}(z,\omega) | \le -c_1n^2h(E).
        \]
        Furthermore, we may take $c_1^{-1} = O(M(P)^6)$ and $c_2 = O(M(P)^3)$.
\end{lemma}

This is modified to become:

\begin{lemma}
        \label{lemma:mod9}
        There exist absolute positive constants $c_1$ and $c_2$ such that if $[n]P$ is an integral point and $n>c_2$, then
        \[
                \log | L_{n,m}(z,\omega) | \le -c_1n^2h(E).
        \]
        Furthermore, we may take $c_1^{-1} = O(C_P)$ and $c_2 = O(\sqrt{C_P})$.
\end{lemma}

\begin{proof}
        The proof is exactly the same, except that in place of using \cite[Lemma 5]{\Ingram}, we track the dependence on $C_P$.
\end{proof}

\begin{proposition}[{\cite[Proposition 11]{\Ingram}}]
   Let $E/\QQ$ be a quasi-minimal elliptic curve, and let $P \in E(\QQ)$ be a point of infinite order.  There exist positive constants $C_3$ and $c_4$ (depending only on $M(P)$) such that for all $n>c_3$, $[n]P$ integral implies
   \[
           n < c_4 h(E)^{5/2}.
   \]
   Furthermore, we may choose the constants such that $c_3, c_4 = O(M(P)^5 \log^+(M(P))^{3/2})$.
\end{proposition}

This we will replace with

\begin{proposition}
        \label{prop:mod11}
   Let $E/\QQ$ be a quasi-minimal elliptic curve, and let $P \in E(\QQ)$ be a point of infinite order.  There exist positive constants $C_3$ and $c_4$ (depending only on $C_P$) such that for all $n>c_3$, $[n]P$ integral implies
   \[
           n < c_4 h(E)^{5/2}.
   \]
   Furthermore, we may choose the constants such that $c_3 = O(C_P)$ and $c_4 = O(C_P^{1/2})$.
\end{proposition}

\begin{proof}
        The proof is as in Ingram, except that (with reference to the notation there), by Proposition \ref{prop:patrick-too} (in lieu of \cite[Proposition 4]{\Ingram}), it now suffices to take
\[
\log B = \log V_1 \ge 2 \log n + 11 h(E),
\]
and we can use $C' = 10^{46}$.  For curves with $h(E) \ge 2\pi\sqrt{3}$, we can then use the improved constant $c_4 = 10^{24}C_P^{1/2}$ (this depends on Lemma \ref{lemma:mod9}), and choosing any $0 < \epsilon < 1$, we can use 
\[
c_3 = \max\left\{ c_\epsilon, \left( 10^{24}C_P^{1/2} \right)^{\frac{1}{1-\epsilon}} \right\},
\]
where $c_\epsilon$ is a constant such that $\log n < n^{\epsilon/3}$ for all $n > c_\epsilon$.   For example, if $\epsilon = 1/2$, we can take $c_\epsilon = 10^8$.  (Note that Ingram makes an inconsequential error in computing $c_3$.)
\end{proof}

\begin{lemma}[{\cite[Lemma 12]{\Ingram}}]
If $E/\QQ$ is a quasi-minimal elliptic curve, and $P \in E(\QQ)$ is a point of infinite order, then there is a constant $C = O(M(P)^4)$ such that the following holds:  if $z$ is the principal value of the elliptic logarithm of $P$, $\omega$ is the real period of $E$ and $[n]P$ is an integral point, then either $|nz| > \omega/2$ or $n<C$.
\end{lemma}

\begin{lemma}
        \label{lemma:mod12}
        If $E/\QQ$ is a quasi-minimal elliptic curve, and $P \in E(\QQ)$ is a point of infinite order, then there is a constant $C = O(C_P^{1/2})$ such that the following holds:  if $z$ is the principal value of the elliptic logarithm of $P$, $\omega$ is the real period of $E$ and $[n]P$ is an integral point, then either $|nz| > \omega/2$ or $n<C$.
\end{lemma}

\begin{proof}
        The proof is as in Ingram:  we replace Ingram's equation $(10)$ with our \eqref{eqn:needlater}, which does not depend on $M(P)$.  Then we can take $C = \sqrt{5/c_1} = \sqrt{10}C_P^{1/2}$.  The proof depends on the modifications Lemma \ref{lemma:mod9} and Proposition \ref{prop:patrick-too}.
\end{proof}

\begin{proposition}[{\cite[Proposition 13]{\Ingram}}]
        Let $E/\QQ$ be quasi-minimal, and let $P \in E(\QQ)$ be a point of infinite order.  Suppose that $[n_2]P$ and $[n_1]P$ are integral points.  Then there exist constants $c_5 = O(M(P)^6)$ and $c_6 = O(M(P)^{16})$, such that
        \[
                n_1^2 h(E) \le c_5 \log n_2
        \]
        whenever $n_1, n_2 > c_6$.
\end{proposition}

We replace this with 

\begin{proposition}
        \label{prop:mod13}
        Let $E/\QQ$ be quasi-minimal, and let $P \in E(\QQ)$ be a point of infinite order.  Suppose that $[n_2]P$ and $[n_1]P$ are integral points.  Then there exist constants $c_5 = O(C_P)$ and $c_6 = O(C_P^{1/2})$, such that
        \[
                n_1^2 h(E) \le c_5 \log n_2
        \]
        whenever $n_1, n_2 > c_6$.
\end{proposition}

\begin{proof}
        The proof is as in Ingram; we use $c_5 = 2/c_1 = 4C_P$ and $c_6 = \max\{ c_0, C, K \}$ where $K$ is an absolute constant.   The proof relies on Lemmas \ref{lemma:mod9} and \ref{lemma:mod12}, and Proposition \ref{prop:mod7}.
\end{proof}

For clarity, we now present the proof of Theorem \ref{thm:patrick-new}, following \cite[Theorem 1]{\Ingram}, but using the modified propositions and lemmas.

\begin{proof}[Proof of Theorem \ref{thm:patrick-new}]
  Let $E/\QQ$ be a quasi-minimal elliptic curve with an integral point $P \in E(\QQ)$ of infinite order (if $P$ were not integral, it would not have any integral multiples).
Let $C_0 = \max\{ c_0, c_3, c_6, c_7 \}$, where
\[
c_7 = \sqrt{c_5 \log c_4} 
\]
If $[n_1]P$ and $[n_2]P$ are integral and $C_0 < n_1, n_2$, then by Propositions \ref{prop:mod11} and \ref{prop:mod13}, we have
\[
n_1^2 h(E) \le c_5 \log n_2, \quad \mbox{and} \quad n_2 \le c_4 h(E)^{5/2}.
\]
Combining these, we have
\begin{equation}
        \label{eqn:replaceconstants}
h(E) \le \frac{5c_5}{2n_1^2} \log h(E) + \frac{c_5}{n_1^2} \log c_4 .
\end{equation}
Recall that
\[
c_5 = O(C_P),  \quad c_4 = O(C_P)^{1/2}
\]
and since $n_1 > c_6 \ge O(C_P^{1/2})$, the first constant in \eqref{eqn:replaceconstants} can be replaced with an absolute constant, and since $n_1 > c_7$, the second can also.  We therefore obtain an absolute upper bound
\[
h(E) \le N.
\]
On those $E$ with $h(E) > N$, there can be at most one $n > C_0$ such that $[n]P$ is integral.  Let
\[
C_0' = \sup_{h(E) \le N} \{ n: [n]P \mbox{ is integral for some }P \in E(\QQ) \}.
\]
The set of $h(E) \le N$ is finite and can be effectively computed, if $N$ is known.  Letting $C = \max\{ C_0, C_0' \}$, and we have shown that there is at most one value of $n > C$ such that $[n]P$ is integral. 

It remains to simplify the constant $C_0$.  Considered as a function of $x = C_P$, it is of the form
\[
C_0 = \max\{ K_0, K_1 x(\log x), K_2 x, K_3 x^{\frac{1}{2}}, K_4 x^{\frac{1}{2}} \left( \log x \right)^{\frac{1}{2}} \},
\]
where the $K_i$ are absolute constants.  If we increase the constant $K_0$ sufficiently, then since $x(\log x)$ grows fastest (as $x$ increases) among all the functions (which are all eventually increasing), we may replace $C_0$ with
\[
C_0 = \max\{ K_0', K_1 x (\log x) \}.
\]
This proves the theorem.
\end{proof}

\section{Other connections and applications}
\label{sec:applications}

\subsection{Growth rates of valuations}
\label{subsec:growth}

The main theorems of this paper give growth rates of $v(W_n)$.  Cheon and Hahn find that for a non-torsion point over a number field with singular reduction, the growth rate is quadratic \cite{\CheonHahn}.  Everest and Ward give more precise growth information in \cite[Theorem 3]{MR1800354}, which says that for any $E$ in minimal Weierstrass form and $P$ of singular reduction,
\[
\log | \Psi_n(P) |_v =  \left( \log | \Delta_E |_v /12 + \lambda_v(P) \right) n^2 + O(n^C),
\]
where $C < 2$ and may depend on $P$ (here, $\lambda_v(P)$ is a canonical local height; see \cite[\S VI.2]{\Siltwo}).

The results of this paper allow us to improve this estimate.  For a point of singular reduction on a curve of additive reduction, the coefficient of $n^2$ depends on the behaviour of the point when the field is extended to resolve the additive reduction; see Theorems \ref{thm:pot-good} and \ref{thm:pot-mult}, and the examples of Section \ref{sec:examples}.  

For multiplicative reduction, the constant is more easily stated.  If $E$ is in minimal form, then from Theorem \ref{thm:mult}, Proposition \ref{prop:rn}\ref{item:growth}, and Proposition \ref{prop:snprop}\ref{item:sngrowth},
      \[
      v(W_n) =  \left( \frac{ a_P (\ell_P - a_P) }{2 \ell_P } \right) n^2 + O(\log n).
      \]
where the meaning of $a_P$ and $\ell_P$ is given in Definition \ref{defn:aplp}.  In particular, 
\[
0 < a_P \le \ell_P = v(\Delta_E).
\]
Using \cite[Theorem VI.4.2(b)]{\Siltwo}, it is immediate to verify that this constant is in agreement with Everest and Ward's.   

In all cases (i.e. all types of bad reduction), our theorem improves Everest and Ward's result.  We have
\begin{theorem}
  Let $K$ be a $p$-adic field, with valuation $v$, and residue field of size $N_K$.  Let $E$ be a minimal elliptic curve over $K$ and let $P \in E(K)$ be a point with singular reduction.  Let $W_n$ be the associated elliptic divisibility sequence.  Then
  \[
  v(W_n) = \left( \frac{\lambda_v(P)}{\log|N_K|} + \frac{v(\Delta_E)}{12} \right) n^2 + O(\log n).
  \]
  \label{thm:growth}
\end{theorem}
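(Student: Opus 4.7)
The strategy is a straightforward case analysis on the reduction type of $E$, since the main theorems (Theorems \ref{thm:non-sing}, \ref{thm:pot-good}, \ref{thm:mult}, \ref{thm:pot-mult}) already give explicit closed-form descriptions of $v(W_n)$, and only the $n^2$ coefficient and the error term need to be extracted. In each case, $v(W_n)$ decomposes as an elliptic troublemaker sequence $R_n(a,\ell)$, plus a sequence of the form $S_{n/n_P}$, plus a polynomial of the form $(n^2-1)r$ arising from change of coordinates to a minimal Weierstrass model over the extension to which we pass in the potential-reduction cases. By Proposition \ref{prop:rn}\ref{item:growth}, the $R_n(a,\ell)$ piece contributes $\frac{\widehat{a}(\ell-\widehat{a})}{2\ell} n^2 + O(1)$; by Proposition \ref{prop:snprop}\ref{item:sngrowth}, the $S$ piece contributes $O(\log n)$. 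Hence in every case $v(W_n) = C_{E,P}\, n^2 + O(\log n)$ for an explicit constant $C_{E,P}$ that can be read directly from the relevant theorem.

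It then remains only to identify $C_{E,P}$ with $\lambda_v(P)/\log N_K + v(\Delta_E)/12$. This is essentially the content of Everest and Ward's \cite[Theorem 3]{MR1800354}, who established the same leading coefficient (up to their choice of normalisation for $\lambda_v$) with the weaker error $O(n^C)$, so after reconciling normalisations with Silverman \cite[Chapter VII]{\Siltwo} the identification is automatic. For a direct verification in the split multiplicative case, one has $\ell_P = v(\Delta_E)$, and the standard Tate-curve formula \cite[Theorem VI.4.2(b)]{\Siltwo} gives $\lambda_v(P)/\log N_K$ as (half of) the periodic Bernoulli polynomial evaluated at $a_P/\ell_P$; comparing with the leading term of $R_n(a_P,\ell_P)/n^2$ via Proposition \ref{prop:rn}\ref{item:bern} and adding $v(\Delta_E)/12$ produces exactly $\widehat{a_P}(\ell_P-\widehat{a_P})/(2\ell_P)$. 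The non-split multiplicative case reduces to the split case by an unramified extension, and the additive potential multiplicative and potential good reduction cases then reduce to these after further field extension, the correction terms $dv(c_4(E))/4$ and $dv(\Delta_E)/12$ supplied by Theorems \ref{thm:pot-mult} and \ref{thm:pot-good} exactly matching the change in $v(\Delta_E)/12$ and $\lambda_v(P)/\log N_K$ under the extension.

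The main obstacle is purely bookkeeping: the identification of the $n^2$ coefficient with the local height already appears in the literature, and the present improvement from $O(n^C)$ to $O(\log n)$ is an immediate consequence of the explicit formulas in the main theorems together with the already-proved $O(1)$ bound on $R_n - \frac{\widehat{a}(\ell-\widehat{a})}{2\ell}n^2$ and $O(\log n)$ bound on $S_n$. In particular no new estimate on $R_n$ or $S_n$ is required beyond what has been catalogued in Sections \ref{sec:form} and \ref{sec:ets}, and the proof is short modulo the normalisation matching.
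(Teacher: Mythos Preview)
Your proposal is correct and follows essentially the same approach as the paper: cite Everest--Ward \cite[Theorem 3]{MR1800354} for the identification of the leading coefficient, and extract the improved $O(\log n)$ error term from the explicit formulas of Theorems \ref{thm:change-to-minimal}, \ref{thm:non-sing}, \ref{thm:pot-good}, \ref{thm:mult}, \ref{thm:pot-mult} together with Proposition \ref{prop:rn}\ref{item:growth} and Proposition \ref{prop:snprop}\ref{item:sngrowth}. Your additional direct verification of the leading coefficient in the split multiplicative case via Proposition \ref{prop:rn}\ref{item:bern} and \cite[Theorem VI.4.2(b)]{\Siltwo} mirrors the discussion the paper gives in Section \ref{subsec:growth} just before the theorem statement, so there is no substantive difference in method.
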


\begin{proof}
  From \cite[Theorem 3]{MR1800354}, all that remains is to show that the error term is correct.  This follows from Propositions \ref{prop:snprop}\ref{item:sngrowth} and \ref{prop:rn}\ref{item:growth} and Theorems \ref{thm:change-to-minimal}, \ref{thm:non-sing}, \ref{thm:pot-good}, \ref{thm:mult} and \ref{thm:pot-mult}.
\end{proof}

\subsection{Torsion points and Tate normal form}
\label{subsec:gezerbizim}

Gezer and Bizim use Tate's normal form for an elliptic curve with an $N$-torsion point to obtain general formul{\ae} for EDS of rank $N$ \cite{GezerBizim}.  For example, the general form of a rank $7$ EDS is
\[
1, -\alpha^2 (\alpha - 1), -\alpha^6 (\alpha - 1)^3 , \alpha^{11} (\alpha - 1)^6 \ldots
\]
They go on to give the general term as
\[
W_n = \epsilon \alpha^{(5n^2 - p)/7} (\alpha-1)^{(3n^2-q)/7},
\]
where
\[
\epsilon = \left\{ \begin{array}{ll}
  +1 & \mbox{if }n \equiv 1,4,5 \pmod{7} \\
  -1 & \mbox{if }n \equiv 2,3,6 \pmod{7} \\
\end{array} \right.
\]
\[
p = \left\{ \begin{array}{ll}
  5 & \mbox{if }n \equiv 1,6 \pmod{7} \\
  6 & \mbox{if }n \equiv 2,5 \pmod{7} \\
  3 & \mbox{if }n \equiv 3,4 \pmod{7} \\
\end{array} \right. , \quad
q = \left\{ \begin{array}{ll}
  3 & \mbox{if }n \equiv 1,6 \pmod{7} \\
  5 & \mbox{if }n \equiv 2,5 \pmod{7} \\
  6 & \mbox{if }n \equiv 3,4 \pmod{7} \\
\end{array} \right. .
\]
We can now restate this general term as
\[
W_n = \epsilon \alpha^{R_n(2,7)} (\alpha-1)^{R_n(1,7)},
\]
where $\epsilon$ is as above.

\section{Examples}
\label{sec:examples}

The examples in this section illustrate the main theorems of the paper describing $v(W_n)$, both the usual and unusual.

\begin{example}
  \label{example:mult}
  This example demonstrates non-singular reduction fitting the hypotheses of Corollary \ref{cor:non-sing-nice}, as well as singular reduction on a curve of multiplicative reduction.  Consider the elliptic curve in minimal Weierstrass form and point
  \[
  E: y^2 + xy = x^3 + x^2 - 1652x + 25168, \quad P = (24,-4),
  \]
  having $j =-2^{-8} \cdot 7^{-2} \cdot 11^3 \cdot 89^{-1} \cdot 7211^3$, $\Delta = - 2^8 \cdot 7^2 \cdot 89$, and $c_4 =11 \cdot 7211$.

  The curve has good reduction at $p=3$.  The point $P$ reduces to a point of order $5$.  By Corollary \ref{cor:non-sing-nice},  
   \[
   v_3(W_n) = \left\{ \begin{array}{ll} v(W_5) + v(n/5) & 5 \mid n \\
    0 & 5 \nmid n \end{array} \right. .
  \]

  The curve has multiplicative reduction at $p=7$.  The point $P$ reduces to a non-singular point of order $6$.  By Corollary \ref{cor:non-sing-nice},
  \[
  v_7(W_n) = 
  \left\{ \begin{array}{ll} v(W_6) + v(n/6) & 6 \mid n \\
    0 & 6 \nmid n \end{array} \right. .
  \]

  The curve has multiplicative reduction at $p=2$.  The point $P$ reduces to the singular point.  The smallest multiple of $P$ reducing to the identity is $[6]P = (4719/196, -56771/2744)$.  In Theorem \ref{thm:mult}, $\ell_P = v_2(\Delta)=8$.  Since $[2]P$ has non-singular reduction, $P$ reduces to the component of $E(\QQ_2)/E_0(\QQ_2)$ having order $2$, i.e. $a_P = 4$.  Using the notations of Lemma \ref{lemma:vals-form}, $h=0$ and $b=p$ by Theorem \ref{thm:mult}.  Also, $s_P = v_2(\Theta([6]P))=1$ and so $j=0$.  Furthermore, $w_P = v_2(\Theta([12]P)/\Theta([6]P)^2) = 3 - 2 = 1$.  Therefore,
  \[
  v_2(W_n) = R_n(4,8) +
  \left\{ \begin{array}{ll} 2 + v(n/6) & v_6(n) > 1 \\
    1 & v_6(n) =1  \\
    0 & 6 \nmid n \end{array} \right. .
  \]

  The EDS associated to $E$ and $P$ is 
\[
1, \; 2^4,\;  2^8,\;  2^{16},\;  2^{24} \cdot 3 \cdot 5,\; 2^{37} \cdot 7,\;  - 2^{48}, - 2^{64} \cdot 211, \;
- 2^{80} \cdot 23 \cdot 137, \ldots
\]
with valuations, agreeing with the formul{\ae} above, of
\begin{align*}
v_2(W_n):& \; 0, 4, 8, 16, 24, 37, 48, 64, 80, 100, 120, 147, 168, 196, 224, 256,
288, \\ &\;\;\; 325, 360, 400, 440, 484, 528, 580, 624, 676, 728, 784, 840, 901,
960, \ldots \\
v_3(W_n):& \; 0, 0, 0, 0, 1, 0, 0, 0, 0, 1, 0, 0, 0, 0, 2, 0, 0, 0, 0, 1, 0, 0, 0, 0,
1, 0, 0, 0, 0, 2, \\ &\;\;\; 0, 0, 0, 0, 1, 0, 0, 0, 0, 1, 0, 0, 0, 0, 3, 0, 0, 0,
0, 1, 0, 0, 0, 0, 1,  \ldots \\
v_7(W_n):& \; 0, 0, 0, 0, 0, 1, 0, 0, 0, 0, 0, 1, 0, 0, 0, 0, 0, 1, 0, 0, 0, 0, 0, 1,
0, 0, 0, 0, 0, 1, \\ &\;\;\; 0, 0, 0, 0, 0, 1, 0, 0, 0, 0, 0, 2, 0, 0, 0, 0, 0, 1,
0, 0, 0, 0, 0, 1,  \ldots
\end{align*}

\end{example}

\begin{example}
  \label{example:identity}
  This example describes a point which reduces to the identity, as well as a point of singular reduction on a curve of additive, potential good reduction.  Consider the elliptic curve in minimal Weierstrass form and point
  \[
  E: y^2 = x^3 + 2471x+1, \quad P = \left( \frac{1}{5^2}, \frac{1249}{5^3} \right),
  \]
  having $j=2^8 \cdot 3^3 \cdot 7^3\cdot  353^3 \cdot 60350132471^{-1}$, $\Delta = - 2^4 \cdot 60350132471$, and $c_4 =- 2^4 \cdot 3 \cdot 7 \cdot 353$.  
  
  This curve has good reduction at $p=5$, but $P$ reduces to the identity.  We are in the case of Theorem \ref{thm:non-sing}, and $v_5(x(P))/2 = -1$.  We have $n_P=1$.  The formal group for this elliptic curve has
  \[
  [5]T = 5T - 3083808T^5 - 33480T^7 + 1574818510720T^9 + O(T^{10}).
  \]
  Therefore, in Lemma \ref{lemma:vals-form}, $b=5$, and $j=h=w=0$.  Therefore, from Theorem \ref{thm:non-sing}, we expect 
  \[
  v_5(W_n) = -n^2 + 1 + v_5(n).
  \]

  At $p=2$, this curve has additive reduction, but potential good reduction.  If we extend $\QQ_2$ by a cube root $\pi$ of $2$ (an extension of ramification degree $3$), then $E$ obtains good reduction.  The change of coordinates is
  \[
  y' = \pi^{-3}(y+x+1), \quad x' = \pi^{-2}(x+1),
  \]
  and the new curve (now in minimal Weierstrass form) and point are
  \[
  E':  y^2 + \pi^2 xy + y = x^3 + \pi x^3 + 618 \pi^2 x + 618, \quad P' = \left( -\frac{12\pi}{5^2}, \frac{622}{5^3} \right).
  \]
  The point $P'$ reduces modulo $\pi$ to the point $(0,0)$ of order $3$ on the reduced curve $y^2 + y = x^3$ over $\FF_2$.  Applying Theorem \ref{thm:non-sing} to $W_n'$, the elliptic divisibility sequence for $E'$ and $P'$, we have $n_P = 3$, $s_P = v_{\pi}(\Theta([3]P)) = 1$.  The formal group for $E'$ has
  \[
  [2]T = 2T - \pi^2 T^2 - 2\pi^2 T^3 + O(T^{10})
  \]
  so that $b=h=2$, $v(p)=3$ and so $j=0$ in Lemma \ref{lemma:vals-form}.  We have $w_P = v_{\pi}(\Theta([6]P)/\Theta([3]P)^2) - h_P = 4 - 2\cdot 1 - 2 = 0$.  Therefore,
  \[
  v_{\pi}(W_n') = \left\{ \begin{array}{ll}
    1 + v_{\pi}(n/3) & 3 \mid n \\
    0 & 3 \nmid n
  \end{array} \right. .
  \]
  By Theorem \ref{thm:pot-good}, we have
  \[
  3 v_2(W_n) = (n^2-1) + v_{\pi}(W_n') = n^2 +  
  \left\{ \begin{array}{ll}
     3v_2(n/3) & 3 \mid n \\
    -1 & 3 \nmid n
  \end{array} \right. .
 \]
  
  The elliptic divisibility sequence for $E$ and $P$ begins
  \begin{multline*}
  1, \; \; 2 \cdot 5^{-3} \cdot 1249, \;\; -1 \cdot 2^3 \cdot 5^{-8} \cdot 298135585859, \\ -1 \cdot 2^5 \cdot 5^{-15}\cdot 1249 \cdot 460436473420870703, \ldots
\end{multline*}
and has valuations, agreeing with the formul{\ae} above, of
\begin{align*}
v_2(W_n):& \; 0, 1, 3, 5, 8, 13, 16, 21, 27, 33, 40, 50, 56, 65, 75, 85, 96, 109,
120, 133, \\ &\;\;\; 147, 161, 176, 195, 208, 225, 243, 261, 280, 301, 320, 341,
363, 385, \\ &\;\;\; 408, 434, 456, 481, 507, 533, 560, 589, 616, 645, 675, 705,
736, 772, \ldots \\
v_5(W_n):& \;   0, -3, -8, -15, -23, -35, -48, -63, -80, -98, -120, -143, -168, \\ 
&\;\;\; -195, -223, -255, -288, -323, -360, -398, -440, -483, -528,  \\
&\;\;\; -575, -622, -675, -728, -783, -840, -898, -960, -1023, \ldots 
\end{align*}

\end{example}

\begin{example}
  \label{example:2-torsion}
  This example showcases a non-integral torsion point.  Consider the elliptic curve, in minimal Weierstrass form, and point
  \[
  E: y^2 + xy + y = x^3 + x^2 - 135x - 660, \quad P = (-29/4, 25/8)
  \]
  The discriminant is $\Delta = 3^8 \cdot 5^2$.  The point $P$ reduces modulo $2$ to the identity.  Therefore, Theorem \ref{thm:non-sing} applies, with $n_P=1$.  We have $v(x(P)) = -2$, so $s_P = 1$, and in the formal group, we have
  \[
  [2]T = 2T - T^2 - 2T^3 - 6T^4 + O(T^5)
  \]
  so that $b_P=2$, $h_P=0$, $j=0$ and $w_P = \infty$ (since $[2]P$ is the identity on $E$).  We obtain
  \[
  v(W_n) = -n^2 + \left\{ \begin{array}{ll}
    \infty & 2 \mid n \\
    1 & 2 \nmid n
  \end{array} \right. .
  \]
  The elliptic divisibility sequence for $E$ and $P$ begins
  \begin{equation*}
    1, \; 0, \; -2^{-8} \cdot 3^8, \; 0, \; 2^{-24} \cdot 3^{24}, \; 0, \; - 2^{-48} \cdot 3^{48}, \ldots
\end{equation*}
and has valuations, agreeing with the formula above, of
\begin{align*}
v_2(W_n):& \; 
0, \infty, -8, \infty, -24, \infty, -48, \infty,
-80, \infty, -120, \infty, -168, \infty, -224, \ldots
\end{align*}

\end{example}

\begin{example}
  \label{example:pot-mult}
  This example illustrates singular reduction on a curve of potential multiplicative reduction.  Consider the curve, in minimal Weierstrass form, and point
  \[
  E: y^2 + 49y = x^3 + 14x^2 - 312352901x +2123335052286, \quad P = (10206, 1176).
  \]

  Modulo $7$, the point $P$ reduces to the cusp $(0,0)$ on the reduced curve, $y^2 = x^3$ (additive reduction).  If we pass to a ramified quadratic extension of $\QQ_7$, say by adjoining a square root $\pi$ of $7$, then the change of coordinates $x' = \pi^{-2}x, y' = \pi^{-3}y$ gives a minimal Weierstrass equation,
  \[
  E': y^2 + \pi y = x^3 + 2x^2 -6374549x + 6190481202, \quad P' = (1458, 24\pi). 
  \]
  having $v_\pi(j) = -10$, $v_\pi(\Delta)=10$, $v_{\pi}(c_4) = 0$.  Therefore, this curve has multiplicative reduction.  The point $P'$ reduces to the node $(2,0)$ on the reduced curve $y^2 = x^3 + 2x^2 + x + 3$.  We have $\ell_P = v_\pi(\Delta) = 10$.  The points $P$ and $[3]P$ reduce to the node, while $[2]P$ reduces to the point $(1,0)$ of order $2$; $[4]P$ reduces to the identity.  Therefore $n_P = 4$.  By Theorem \ref{thm:mult}\ref{item:mult-ns}, $a_P=5$.  Alternatively, $a_P$ must have order $2$ in $\ZZ/\ell_P\ZZ$, so it must be $a_P=5$.  Using the notations of Lemma \ref{lemma:vals-form}, $b=p$ and $h=0$ by Theorem \ref{thm:mult}.  We can compute $s_P = v(\Theta([4]P)) = 1$, which tells us that $j=0$ and $w_P=0$.  Gathering together these parameters, we obtain the sequence of valuations for $W_n'$, the EDS associated to $E'$ and $P'$:
  \[
  v_\pi(W_n') = 
  R_n(5,10) + \left\{ \begin{array}{ll} 1 + v_\pi(n/7) & 7 \mid n \\ 0 & 7 \nmid n . \end{array} \right. .
    \]
  By Theorem \ref{thm:pot-mult}, we have
  \[
  2v_7(W_n) = (n^2-1) + 
  R_n(5,10) + \left\{ \begin{array}{ll} 1 + 2v_7(n/7) & 7 \mid n \\ 0 & 7 \nmid n . \end{array} \right. .
    \]
    The elliptic divisibility sequence $W_n$ begins
    \[
    1,\; 7^4,\; 7^9,\; 7^{18},\; 2 \cdot 3^2 \cdot 7^{27} \cdot 19, \ldots
    \]
and has valuations, agreeing with the formula above, of
\begin{align*}
v_7(W_n):& \;0, 4, 9, 18, 27, 40, 54, 72, 90, 112, 135, 162, 189, 220, 252, 288,
324, 364, \\ &\;\;\; 405, 450, 495, 544, 594, 648, 702, 760, 819, 883, 945, 1012,
1080, \\ &\;\;\; 1152, 1224, 1300, 1377, 1458, 1539, 1624, 1710, 1800, 1890, 1984, \ldots
\end{align*}

\end{example}

\begin{example}
  \label{example:sing-pot-good}
  This example of potential good reduction exhibits very unusual, complicated behaviour.  In particular, we have an example with $j \neq 0$ in Lemma \ref{lemma:vals-form}.  Let $K = \QQ_2$, and $R$ be its ring of integers.  Let $\alpha = \sqrt{17} \in R^*$.  Then the curve
\[
E:  y^2 = x^3 + \alpha x + \alpha + 2
\]
has $j = 2^8 + 2^{10} + 2^{14} + 2^{17} + O(2^{19}) \in R$, so $E$ has potential good reduction.  It is a minimal Weierstrass equation since $v_2(\Delta) = 4 < 12$.  It has additive reduction, since $v_2(c_4) = 4 > 0$.

The reduced curve over $\mathbb{F}_2$ is 
\[
\widetilde{E}:  y^2 = x^3 + x + 1
\]
which has a cusp at $(1,1)$.

Let $\beta^2 = (-17)^3 + \alpha (-17) + \alpha  + 2$.  Then $\beta \in R^*$.  Let $P = (-17, \beta) \in E(K)$.  The point $P$ has singular reduction to the cusp $(1,1)$, but $[2]P$ reduces to the non-singular two-torsion point $(0,1)$.

We have to pass to a ramified extension $L/\QQ_2$ to obtain good reduction for $E$, which will guarantee non-singular reduction for $P$.  It will suffice to change coordinates to Deuring normal form, 
\[
E_D:  y^2 + a xy + y = x^3.
\]
The change of coordinates required is
\[
x = u^2x' + r, \quad y = u^3y' + u^2sx' + t
\]
where $q=s-1$ is a root of the irreducible polynomial
\[
p(x) = (x+1)^8 + 18 \alpha (x+1)^4 + 108(\alpha+2) (x+1)^2 - 27 \alpha^2,
\]
whose constant term, $a_0 = 217 + 126\alpha - 27\alpha^2$, has valuation $v_2(a_0) = 2$.  Therefore $q$ is not a uniformizer (since the polynomial is not Eisenstein), but it has positive valuation.  Let $N = \QQ_2(s)$ have valuation $v_N = dv_2$ where $d$ is the ramification degree of $N$ over $\QQ_2$.  Since $v_N(p(q)-q^8 - a_0) > v_N(a_0) = 2d$ (all the intermediate terms of the polynomial are divisible by $4x$), we find that $8v_N(q) = 2d$, i.e. $v_N(q) = d/4$.  Hence the extension is totally ramified ($d=8$), and $v_N(s-1) = 2$.

We also have $u^3 = (\alpha + s^4/3)/s \in \QQ_2(s)$.  We can compute the valuation of $3\alpha + s^4$ in $N$ as follows.  We have
\[
3\alpha = (1+2)(1+2^3 + O(2^5)) = 1 + 2 + 2^3 + O(2^4).
\]
Meanwhile, 
\[
s^4 = 1 + 4q + 6q^2 + 4q^3 + q^4.
\]
So, $v(3\alpha + s^4) = 8$.  Therefore, $v_N(u^3) = 8$.  Hence, $u$ generates a totally ramified extension $L$ of degree $3$ over $N = \QQ_2(s)$.  Therefore, $[L:\QQ_2] = 24$.

We have $r = s^2/3$ and $t = u^{3}/2$.  Finally, $a=2s/u$.  

The Deuring normal form is a minimal Weierstrass equation of good reduction.  (We could also verify that $v_L(u) = 8$ since $v_L(\Delta_E) = 24 v_2(\Delta_E) = 96$ and so $0 = v_L(\Delta_{E_D}) = v_L(u^{-12}\Delta_E) = v_L(\Delta_E) - 12v_L(u) = 96 - 12v_L(u)$.  We also find that $v_L(a) = 24 + v_L(s) - v_L(u) = 16$, so $a$ is an integer, which confirms that $E_D$ has good reduction.)

Let $\phi: E \rightarrow E_D$ represent the change of coordinates to Deuring normal form.  Then,  
\[
v(x(\phi(P)))  = v_L(17 + s^2/3) - 2v_L(u) = 12 - 16 = -4.
\]

The EDS $W_n'$ associated to the curve $E_D$ and point $P_D = \phi(P)$ satisfies
\[
v_L(W'_n) = 24 v_2(W_n) - 8(n^2-1)
\]
and is associated to a point of non-singular reduction.  In fact, $P_D$ reduces to the point at infinity and has $v(x(P_D)) = -4$.  Thus, the sequence
\[
v_L(W'_n) + 2n^2 = 24 v_2(W_n) - 8(n^2-1) + 2n^2 
\]
must be of the form $S_n(p,t,d,h,s,w)$ as in Lemma \ref{lemma:vals-form}.  Multiplication-by-$2$ in the formal group for $E_D$ begins
\[
[2]T = 2T - aT^2 + (1+ a)T^4  \ldots,
\]
and since $v_L(a) = 16$, we get $b=4$ in Lemma \ref{lemma:vals-form}, and so we have $t=2$, $c=2$, $j=1$ and $w=6$ in Definition \ref{defn:sn}.  The sequence $v_L(W'_n) - 2n^2 = S_n(2,2,24,0,2,6)$ is
\begin{align*}
&2, 8, 2, 32, 2, 8, 2, 56, 2, 8, 2, 32, 2, 8, 2, 80, 2, 8, 2, 32, 2, 8, 2,
56, \\ &2, 8, 2, 32, 2, 8, 2, 104, 2, 8, 2, 32, 2, 8, 2, 56, 2, 8, 2, 32, 2,
8, 2, 80, \\ & 2, 8, 2, 32, 2, 8, 2, 56, 2, 8, 2, 32, 2, 8, 2, 128, 2, 8, 2,
32, 2, 8, 2, 56, \\ & 2, 8, 2, 32, 2, 8, 2, 80, 2, 8, 2, 32, 2, 8, 2, 56, 2,
8, 2, 32, 2, 8, 2, 104, 2, 8, 2, \ldots
\end{align*}
Now let us verify this directly.  The first few terms of the elliptic divisibility sequence associated to $E$ and $P$ are
\begin{align*}
&1, \; 2\beta, \; -\alpha^2 + 1530\alpha
+ 250155, \\ 
&\quad \quad -4\beta\alpha^3 - 5540\beta\alpha^2 + 1277796\beta\alpha + 95764068\beta, \; \ldots
\end{align*}
or
\begin{align*}
& 1,\;  2 + 2^6 + 2^9 + 2^{11} + 2^{12} + 2^{13} + 2^{16} + 2^{18} + O(2^{19}), \\
& \quad \quad  2^2 +
2^5 + 2^7 + 2^8 + 2^{10} + 2^{11} + 2^{12} + 2^{14} + 2^{17} + O(2^{19}), \\
& \quad \quad 2^5
+ 2^7 + 2^9 + 2^{10} + 2^{12} + 2^{13} + 2^{15} + 2^{16} + 2^{18} + O(2^{19}) \; \ldots
\end{align*}
The valuations $v_2(W_n)$ are,
\begin{align*}
& 0,\; { 1},\; 2,\; { 5},\; 6,\; { 9},\; 12,\; { 18},\; 20,\; { 25},\; 30,\; { 37},\; 42,\; { 49}, \\
& \quad\quad 56,\; { 67},\; 72, \; { 81},\; 90,\; { 101},\; 110,\; { 121},\; 132,\;{ 146}, \;156, \;\ldots 
\end{align*}
These are exactly equal to
\[
\frac{1}{24} \left( 8(n^2-1) - 2n^2 + S_n(2,2,24,0,2,6) \right).
\]

\end{example}


\bibliographystyle{amsplain}
\bibliography{EDS}

\providecommand{\bysame}{\leavevmode\hbox to3em{\hrulefill}\thinspace}
\providecommand{\MR}{\relax\ifhmode\unskip\space\fi MR }
\providecommand{\MRhref}[2]{%
  \href{http://www.ams.org/mathscinet-getitem?mr=#1}{#2}
}
\providecommand{\href}[2]{#2}
\begin{thebibliography}{10}

\bibitem{MR1185022}
Mohamed Ayad, \emph{Points {$S$}-entiers des courbes elliptiques}, Manuscripta
  Math. \textbf{76} (1992), no.~3-4, 305--324. \MR{MR1185022 (93i:11064)}

\bibitem{MR0231783}
A.~Baker, \emph{The {D}iophantine equation {$y^{2}=ax^{3}+bx^{2}+cx+d$}}, J.
  London Math. Soc. \textbf{43} (1968), 1--9. \MR{0231783 (38 \#111)}

\bibitem{MR1654780}
J.~Cheon and S.~Hahn, \emph{Explicit valuations of division polynomials of an
  elliptic curve}, Manuscripta Math. \textbf{97} (1998), no.~3, 319--328.
  \MR{MR1654780 (99i:11039)}

\bibitem{MR1683630}
\bysame, \emph{The orders of the reductions of a point in the {M}ordell-{W}eil
  group of an elliptic curve}, Acta Arith. \textbf{88} (1999), no.~3, 219--222.
  \MR{1683630 (2000i:11084)}

\bibitem{MR2377127}
Gunther Cornelissen and Karim Zahidi, \emph{Elliptic divisibility sequences and
  undecidable problems about rational points}, J. Reine Angew. Math.
  \textbf{613} (2007), 1--33. \MR{2377127 (2009h:11196)}

\bibitem{MR1385175}
Sinnou David, \emph{Minorations de formes lin\'eaires de logarithmes
  elliptiques}, M\'em. Soc. Math. France (N.S.) (1995), no.~62, iv+143.
  \MR{1385175 (98f:11078)}

\bibitem{MR1815962}
Manfred Einsiedler, Graham Everest, and Thomas Ward, \emph{Primes in elliptic
  divisibility sequences}, LMS J. Comput. Math. \textbf{4} (2001), 1--13
  (electronic). \MR{MR1815962 (2002e:11181)}

\bibitem{MR2228252}
Torsten Ekedahl, \emph{One semester of elliptic curves}, EMS Series of Lectures
  in Mathematics, European Mathematical Society (EMS), Z\"urich, 2006.
  \MR{2228252 (2007c:14029)}

\bibitem{MR1800354}
G.~Everest and T.~Ward, \emph{The canonical height of an algebraic point on an
  elliptic curve}, New York J. Math. \textbf{6} (2000), 331--342 (electronic).
  \MR{1800354 (2001j:11056)}

\bibitem{MR2164113}
Graham Everest and Helen King, \emph{Prime powers in elliptic divisibility
  sequences}, Math. Comp. \textbf{74} (2005), no.~252, 2061--2071 (electronic).
  \MR{MR2164113 (2006d:11057)}

\bibitem{MR2220263}
Graham Everest, Gerard Mclaren, and Thomas Ward, \emph{Primitive divisors of
  elliptic divisibility sequences}, J. Number Theory \textbf{118} (2006),
  no.~1, 71--89. \MR{MR2220263 (2007a:11074)}

\bibitem{MR1961589}
Graham Everest and Thomas Ward, \emph{Primes in divisibility sequences}, Cubo
  Mat. Educ. \textbf{3} (2001), no.~2, 245--259. \MR{MR1961589 (2003m:11043)}

\bibitem{GezerBizim}
B.~Gezer and O.~Bizim, \emph{Elliptic divisibility sequences associated to
  elliptic curves with torsion points}, 2011, \url{arXiv:1101.3839}.

\bibitem{MR2669714}
Bet{\"u}l Gezer and Osman Bizim, \emph{Squares in elliptic divisibility
  sequences}, Acta Arith. \textbf{144} (2010), no.~2, 125--134. \MR{2669714}

\bibitem{MR0323705}
Marshall Hall, Jr., \emph{The {D}iophantine equation {$x^{3}-y^{2}=k$}},
  Computers in number theory ({P}roc. {S}ci. {R}es. {C}ouncil {A}tlas {S}ympos.
  {N}o. 2, {O}xford, 1969), Academic Press, London, 1971, pp.~173--198.
  \MR{0323705 (48 \#2061)}

\bibitem{MR948108}
M.~Hindry and J.~H. Silverman, \emph{The canonical height and integral points
  on elliptic curves}, Invent. Math. \textbf{93} (1988), no.~2, 419--450.
  \MR{MR948108 (89k:11044)}

\bibitem{MR2468477}
Patrick Ingram, \emph{Multiples of integral points on elliptic curves}, J.
  Number Theory \textbf{129} (2009), no.~1, 182--208. \MR{2468477
  (2010a:11102)}

\bibitem{ProofByAuthority}
Patrick Ingram, personal communication, 2011.

\bibitem{ingramsilverman06}
Patrick Ingram and Joseph~H. Silverman, \emph{Uniform estimates for primitive
  divisors in elliptic divisibility sequences}, Number theory, Analysis and
  Geometry (In memory of Serge Lang), Springer-Verlag, 2011, pp.~233--263.

\bibitem{MR518817}
Serge Lang, \emph{Elliptic curves: {D}iophantine analysis}, Grundlehren der
  Mathematischen Wissenschaften [Fundamental Principles of Mathematical
  Sciences], vol. 231, Springer-Verlag, Berlin, 1978. \MR{MR518817 (81b:10009)}

\bibitem{MR717593}
\bysame, \emph{Conjectured {D}iophantine estimates on elliptic curves},
  Arithmetic and geometry, {V}ol. {I}, Progr. Math., vol.~35, Birkh\"auser
  Boston, Boston, MA, 1983, pp.~155--171. \MR{717593 (85d:11024)}

\bibitem{Mahe-Explicit-bounds}
Val{\'e}ry Mah{\'e}, \emph{Prime power terms in elliptic divisibility
  sequences}, Math. Comp. \textbf{83} (2014), no.~288, 1951--1991. \MR{3194137}

\bibitem{MR1348477}
{\'A}kos Pint{\'e}r, \emph{On the magnitude of integer points on elliptic
  curves}, Bull. Austral. Math. Soc. \textbf{52} (1995), no.~2, 195--199.
  \MR{1348477 (96k:11070)}

\bibitem{MR1145607}
Wolfgang~M. Schmidt, \emph{Integer points on curves of genus {$1$}}, Compositio
  Math. \textbf{81} (1992), no.~1, 33--59. \MR{1145607 (93e:11076)}

\bibitem{Shipsey00}
Rachel Shipsey, \emph{Elliptic divisibility sequences}, Ph.D. thesis,
  Goldsmith's College (University of London), 2000.

\bibitem{MR630588}
Joseph~H. Silverman, \emph{Lower bound for the canonical height on elliptic
  curves}, Duke Math. J. \textbf{48} (1981), no.~3, 633--648. \MR{MR630588
  (82k:14043)}

\bibitem{MR895285}
\bysame, \emph{A quantitative version of {S}iegel's theorem: integral points on
  elliptic curves and {C}atalan curves}, J. Reine Angew. Math. \textbf{378}
  (1987), 60--100. \MR{MR895285 (89g:11047)}

\bibitem{MR1035944}
\bysame, \emph{The difference between the {W}eil height and the canonical
  height on elliptic curves}, Math. Comp. \textbf{55} (1990), no.~192,
  723--743. \MR{MR1035944 (91d:11063)}

\bibitem{MR1312368}
\bysame, \emph{Advanced topics in the arithmetic of elliptic curves}, Graduate
  Texts in Mathematics, vol. 151, Springer-Verlag, New York, 1994.
  \MR{MR1312368 (96b:11074)}

\bibitem{MR2514094}
\bysame, \emph{The arithmetic of elliptic curves}, second ed., Graduate Texts
  in Mathematics, vol. 106, Springer, Dordrecht, 2009. \MR{MR2514094}

\bibitem{MR2747036}
Joseph~H. Silverman and Katherine~E. Stange, \emph{Terms in elliptic
  divisibility sequences divisible by their indices}, Acta Arith. \textbf{146}
  (2011), no.~4, 355--378. \MR{2747036}

\bibitem{MR2226354}
Joseph~H. Silverman and Nelson Stephens, \emph{The sign of an elliptic
  divisibility sequence}, J. Ramanujan Math. Soc. \textbf{21} (2006), no.~1,
  1--17. \MR{MR2226354 (2007a:11075)}

\bibitem{MR1288309}
Vladimir~G. Sprind{\v{z}}uk, \emph{Classical {D}iophantine equations}, Lecture
  Notes in Mathematics, vol. 1559, Springer-Verlag, Berlin, 1993, Translated
  from the 1982 Russian original, Translation edited by Ross Talent and Alf van
  der Poorten, With a foreword by van der Poorten. \MR{1288309 (95g:11017)}

\bibitem{Stange10}
Katherine Stange, \emph{Elliptic nets and elliptic curves}, Algebra Number
  Theory \textbf{5} (2011), no.~2, 197--229. \MR{2833790 (2012i:11061)}

\bibitem{MR0340175}
H.~M. Stark, \emph{Effective estimates of solutions of some {D}iophantine
  equations}, Acta Arith. \textbf{24} (1973), 251--259, Collection of articles
  dedicated to Carl Ludwig Siegel on the occasion of his seventy-fifth
  birthday, III. \MR{0340175 (49 \#4931)}

\bibitem{sage}
W.\thinspace{}A. Stein et~al., \emph{{S}age {M}athematics {S}oftware ({V}ersion
  4.6.2)}, The Sage Development Team, 2011, {\tt http://www.sagemath.org}.

\bibitem{MR2377368}
Marco Streng, \emph{Divisibility sequences for elliptic curves with complex
  multiplication}, Algebra Number Theory \textbf{2} (2008), no.~2, 183--208.
  \MR{2377368 (2009e:11110)}

\bibitem{MR0023275}
Morgan Ward, \emph{Memoir on elliptic divisibility sequences}, Amer. J. Math.
  \textbf{70} (1948), 31--74. \MR{MR0023275 (9,332j)}

\bibitem{MR2555700}
Minoru Yabuta, \emph{Primitive divisors of certain elliptic divisibility
  sequences}, Experiment. Math. \textbf{18} (2009), no.~3, 303--310.
  \MR{2555700 (2010h:11050)}

\end{thebibliography}
\end{document}